\newtheorem{thm}{Theorem}[section]
\newtheorem{defn}[thm]{Definition}
\newtheorem{lem}[thm]{Lemma}
\newtheorem{cor}[thm]{Corollary}
\newtheorem{rem}[thm]{Remark}
\newtheorem{prop}[thm]{Proposition}
\newenvironment{Assumptions}
{
\setcounter{enumi}{0}

\begin{enumerate}}
{\end{enumerate} }
\newcommand{\edgesint}{\mathcal{E}_{\operatorname{int}}}
\newcommand{\reg}{\operatorname{reg}(\mathcal{T})}
\newcommand{\Tau}{\mathcal{T}}
\newcommand{\dkl}{d_{K|L}}
\newcommand{\E}{\mathbb{E}}
\newcommand{\N}{\mathbb{N}}
\newcommand{\R}{\mathbb{R}}
\newcommand{\erww}[1]{\mathbb{E}\left[#1\right]}
\begin{document}

\title{Convergence rates for a finite volume scheme of a stochastic non-linear parabolic equation}
\author{Kavin Rajasekaran \thanks{Institute of Mathematics, Clausthal University of Technology, 38678 Clausthal-Zellerfeld, Germany \href{mailto:aleksandra.zimmermann@tu-clausthal.de}{\texttt{kavin.rajasekaran@tu-clausthal.de}}} \and Niklas Sapountzoglou \thanks{Institute of Mathematics, Clausthal University of Technology, 38678 Clausthal-Zellerfeld, Germany \href{mailto:niklas.sapountzoglou@tu-clausthal.de}{\texttt{niklas.sapountzoglou@tu-clausthal.de}}}}
\date{}

\maketitle

\begin{abstract}
In this contribution, we provide convergence rates for a finite volume scheme of a stochastic non-linear parabolic equation with multiplicative Lipschitz noise and homogeneous Neumann boundary conditions. More precisely, we give an error estimate for the $L^2$-norm of the space-time discretization by a semi-implicit Euler scheme with respect to time and a two-point flux approximation finite volume scheme with respect to space and the variational solution. The only regularity assumptions additionally needed is spatial regularity of the initial datum and smoothness of the diffusive term.
\end{abstract}
\noindent
\textbf{Keywords:} Finite volume scheme $\bullet$ Error estimates 
$\bullet$ Stochastic non-linear parabolic equation $\bullet$ 
Multiplicative noise $\bullet$ Space-time discretization \\

\noindent
\textbf{Mathematics Subject Classification (2020):} 60H15 $\bullet$ 35K05 $\bullet$ 65M08

\section{Introduction}
Let $\Lambda$ be a bounded, open, connected, and polygonal set of $\mathbb{R}^d$ with $d=2,3$.
Moreover let $(\Omega,\mathcal{A},\mathds{P})$ be a probability space endowed with a right-continuous, complete filtration $(\mathcal{F}_t)_{t\geq 0}$ and let $(W(t))_{t\geq 0}$ be a standard, one-dimensional Brownian motion with respect to $(\mathcal{F}_t)_{t\geq 0}$ on $(\Omega,\mathcal{A},\mathds{P})$. For a measurable space $(\mathcal{S},\mu)$, a separable Banach space $X$ and $1\leq \nu \leq \infty$, we will denote the classical Lebesgue-Bochner space by $L^{\nu}(S;X)$. If $D \subset \R^d$ is open and $k \in \N$ we will write $H^k(D):=\{v \in L^2(D);~\partial_{\alpha} v \in L^2(D)~\forall \, \alpha \text{ with } |\alpha| \leq k\}$ in the sequel.
For $T>0$, we consider a stochastic non-linear parabolic equation forced by a multiplicative stochastic noise:
\begin{align}\label{equation}
\begin{aligned}
du-\Delta u\,dt + \mbox{div}_x (\mathbf{v}f(u))\, dt &=g(u)\,dW(t)+\beta(u)\,dt, &&\text{ in }\Omega\times(0,T)\times\Lambda;\\
u(0,\cdot)&=u_0, &&\text{ in } \Omega\times\Lambda;\\
\nabla u\cdot \vec{n}&=0, &&\text{ on }\Omega\times(0,T)\times\partial\Lambda;
\end{aligned}
\end{align}
where $\vec{n}$ denotes the unit normal vector to $\partial\Lambda$ outward to $\Lambda$, $u_0\in L^2(\Omega;L^2(\Lambda))$ is $\mathcal{F}_0$-measurable and $\Delta$ denotes the Laplace operator on $H^1(\Lambda)$ associated with the weak formulation of the homogeneous Neumann boundary condition and $\operatorname{div}$ denotes the divergence operator w.r.t. space. Moreover, the stochastic noise term $g$, the external force term $\beta$ and the term $f$ are real-valued Lipschitz functions and $\mathbf{v}$ is defined on $[0,T] \times \Lambda$ taking values in $\R^d$. Hence, the term $\mathbf{v} f(u)$ denotes the flux of the system.

\subsection{State of the art}
Existence and uniqueness results of variational solutions, mild solutions, weak solutions and pathwise solutions of Problem \eqref{equation} is well known in the literature, see e.g., \cite{DPraZab, KryRoz81, LR, PardouxThese}. In this paper, we will focus on variational solutions defined in Defintion \ref{solution}. In the last decades, the interest in studying numerical schemes for SPDEs arised in the literature. A brief overview of the state of the art is given by \cite{ACQS20, DP09, OPW22}. In the past, finite-element methods have been extensively used and have been favored in the numerical analysis of variational solutions, see e.g., \cite{BBNP14, BHL21} and the cited papers therein. Finite volume schemes can preserve important physical properties at the numerical level. 
Droniou pointed out in \cite{D14} that the finite volume schemes are favorable amongst other numerical schemes when it comes down to the conservation of properties of the underlying equation. The TPFA scheme is a cell-centered finite volume scheme and cheap to implement due to the fact that the matrices are sparse. Especially when implementing numerical schemes of SPDEs instead of PDEs, a numerical scheme which is cheap to implement and fast running is very valuable. Finite volume schemes have been studied for stochastic scalar conservation laws in, \textit{e.g.}, \cite{BCC20, BCG16_01, BCG16_02, BCG17}. \\
\newline
Moreover, an existence and uniqueness result of the solution to a finite volume scheme of Problem \eqref{equation} can be found in \cite{BNSZ23_2}.\\
\newline
Concerning elliptic and parabolic second-order problems, \cite{ABH07} points out the flexibility on the geometry of the meshes and ensure the local consistency of the numerical fluxes inside the domain. The finite volume discretization is well-suited when considering an additional convective first-order term to the equation. Standard cell-centered finite volume methods may be used when the leading second-order operator in the equation is the Laplacian (see, e.g., \cite{EGH00}) such as the two point flux approximation scheme (TPFA).
General linear, elliptic, second-order deterministic equations have been studied in e.g. \cite{CVV99, DE06, EGH06}. Therein, the gradient reconstruction procedure requires more than the normal direction between two centers of neighbouring cells and the finite volume scheme becomes more involved, from the theoretical and from the numerical point of view. In this contribution, we restricted ourselves to the case of the Laplacian and the TPFA scheme. In the case $\operatorname{div}(\mathbf{v}f(u))=0$, the convergence of a space-time discretization by a finite volume scheme towards the unique variational solution of the stochastic heat equation in two spatial dimensions has been studied in \cite{BN20} in the case of a linear multiplicative noise and in \cite{BNSZ23} in the more general case of a multiplicative Lipschitz noise. Therein, a TPFA has been used with respect to space and a semi-implicit Euler scheme has been used for the time discretization. In \cite{BN20}, the convergence has been proven by classical arguments. In \cite{BNSZ23}, the convergence of the nonlinear multiplicative noise term has been addressed using the stochastic compactness method based on the theorems of Prokhorov and Skorokhod. The authors in \cite{SZ25} have shown convergence rates for a finite volume scheme of the stochastic heat equation. For general $f$ and divergence-free $\mathbf{v}$, the existence of a solution to a finite volume scheme of Problem \eqref{equation} has been shown in \cite{BNSZ23_2}. The convection term is approximated by an upwind scheme. In \cite{BSZ24}, the convergence of the solution of the finite volume scheme in \cite{BNSZ23_2} to the variational solution could be shown in the case $f \equiv id$ without using the stochastic compactness method, making this approach more accessible without deeper knowledge of stochastic analysis. In the case $\operatorname{div}(\mathbf{v}f(u)) \neq 0$, convergence of a finite volume scheme to the variational solution has not been proven yet. Even though if additionally $f \equiv id$, for all these finite volume schemes, convergence has been proven without giving any rate. To the best of our knowledge, the only result about convergence rates of finite volume schemes for parabolic SPDEs in the variational setting is the work \cite{SZ25}, where the case $\operatorname{div}(\mathbf{v}f(u)) = 0$ is considered. However, there are many results concerning convergence rates for different numerical schemes of SPDEs, see, \textit{e.g.}, \cite{BP23, KG22, MajPro18} and, in particular, see \cite{BHL21, DHW23, GM05, GM07} for space-time discretizations and convergence rates for nonlinear monotone stochastic evolution equations. In our study, we want to provide error estimates for the finite volume scheme proposed in \cite{BNSZ23_2} in the case $f \equiv id$ in space dimensions $d=2$ and $d=3$ under rather mild and natural regularity assumptions on the initial condition $u_0$ and on the diffusive term $g$ in the stochastic integral.

\subsection{Aim of the study}
We want to provide a generalization of the result in \cite{SZ25} to the case $\operatorname{div}(\mathbf{v}f(u)) \neq 0$, where $f \equiv id$ and $\mathbf{v}$ is divergence free. However, we tried to provide a result for general $f$, but we did not manage to get the last argument right in the general case.
In order to be precise, we want to provide a result about convergence rates of a finite volume scheme for parabolic SPDEs with homogeneous Neumann boundary conditions. We consider the case of two and three spatial dimensions. The proposed finite volume scheme does not coincide with the finite element methods and other numerical schemes for which convergence rates for \eqref{equation} have been considered. Our study provides error estimates for the $L^2$-norm of the space-time discretization of \eqref{equation} proposed in \eqref{FVS} and the variational solution of \eqref{equation} of order $O(\tau^{1/2} + h + h \tau^{- 1/2})$, where $\tau$ represents the time step and $h$ the spatial parameter. The unsatisfying term $h \tau^{-1/2}$ arises due to the fact that SPDEs are considered and differentiability in time cannot be assumed for the solution itself. In the deterministic case, see e.g. \cite{Flore21}, one may get rid of the unsatisfying term $h \tau^{-1/2}$ by assuming the exact solution to be two times differentiable in time. In the stochastic case this unsatisfying term appears also in other cases, e.g. for mixed finite element schemes for the stochastic Navier-Stokes equation (see \cite{FQ21}). We compare the exact solution of Problem \eqref{equation} with the solution of the semi-implicit Euler scheme first. $H^2$-regularity in space for the initial value $u_0$ and smoothness for the function $g$ in the stochastic It\^{o} integral, $\mathbf{v}$ and $\beta$ has to be assumed. Then, the error is of order $O(\tau^{1/2})$. Finally, comparing the centered projection of the solution of the semi-implicit Euler scheme with the solution of the finite volume scheme provides an error of order $O(\tau + h + h \tau^{-1/2})$.
\subsection{Outline}
The paper is organized as follows:\\
In Section 2, we introduce the considered problem as well as the finite volume framework as well as some analytic tools which are used frequently.\\
In Section 3, we state the main result and the regularity assumptions.\\
In Section 4, we introduce the semi-implicit Euler scheme for a stochastic non-linear parabolic equation with Neumann boundary conditions and we provide a-posteriori estimates for its solution.\\
Section 5 contains stability estimates for the exact solution of a stochastic non-linear parabolic equation and a regularity result.\\
In Section 6, we estimate the $L^2$-error between the exact solution and the solution of the semi-implicit Euler scheme and the $L^2$-error between the centered projection of the solution of the semi-implicit Euler scheme and the solution of the finite volume scheme.\\
In Section 7, we collect the previous results and prove our main theorem.\\
\section{A Finite Volume scheme for a stochastic non-linear parabolic equation}
\subsection{A stochastic non-linear parabolic equation with multiplicative noise}
For the well-posedness of a variational solution to a stochastic non-linear parabolic equation, we assume the following regularity on the data:
\begin{itemize}
\item[$i.)$] $u_0\in L^2(\Omega;L^2(\Lambda))$ is $\mathcal{F}_0$-measurable.
\item[$ii.)$] $g:\mathbb{R}\rightarrow\mathbb{R}$ is a Lipschitz continuous function.
\item[$iii.)$] $f:\mathbb{R}\rightarrow\mathbb{R}$ is non-decreasing and Lipschitz continuous function with $f(0)=0$.
\item[$iv.)$] $\beta:\mathbb{R}\rightarrow\mathbb{R}$ is a Lipschitz continuous function with $\beta(0)=0$.
\item[$v.)$] $\mathbf{v} \in C^1([0,T] \times \Lambda; \mathbb{R}^d)$ such that ${\operatorname{div}} (\mathbf{v})=0$ in $[0,T] \times \Lambda$ and $ \mathbf{v} \cdot \vec{n}=0$ on $[0,T] \times \partial \Lambda$.\\
\end{itemize}
For $u_0$ and $g$ that satisfy the above assumptions, we will be interested in the following concept of \textit{variational solution} for \eqref{equation}:
\begin{defn}\label{solution} A predictable stochastic process $u$ in $L^2\left(\Omega\times(0,T);L^2(\Lambda)\right)$ is 
a variational solution to Problem \eqref{equation} if it belongs to 
\begin{align*}
 L^2(\Omega;\mathcal{C}([0,T];L^2(\Lambda)))\cap L^2(\Omega;L^2(0,T;H^1(\Lambda)))
\end{align*}
and satisfies, for all $t\in[0,T]$,
\begin{align}\label{240311_eq1}
u(t)-u_0+ \int_0^t \operatorname{div}(\mathbf{v}f(u(s))) - \beta(u(s))+ \Delta u(s)\,ds=\int_0^t g(u(s))\,dW(s)
\end{align}
in $L^2(\Lambda)$ and $\mathds{P}$-a.s. in $\Omega$, where $\Delta$ denotes the Laplace operator on $H^1(\Lambda)$ associated with the weak formulation of the homogeneous Neumann boundary condition.
\end{defn}
\begin{rem}
Existence, uniqueness and regularity of a variational solution to \eqref{equation} in the sense of Definition \ref{solution} is well-known in the literature, see, e.g., \cite{KryRoz81, LR, PardouxThese}.
\end{rem}
\subsection{Admissible meshes and notations}
Let us introduce the temporal and spatial discretizations. For the time-discretization, let $N\in\mathbb{N}$ be given. We define the equidistant time step $\tau=\frac{T}{N}$ and divide the interval $[0,T]$ in $0=t_0<t_1<...<t_N=T$ with $t_n=n \tau$ for all $n\in \{0, ..., N-1\}$.
For the space discretization, we consider admissible meshes in the sense of \cite[Definition 9.1]{EGH00}. For the sake of self-containedness, we add the definition:
\begin{defn}[see \cite{EGH00}, Definition 9.1]\label{defmesh} 
For a bounded domain $\Lambda\subset\mathbb{R}^d$, $d=2$ or $d=3$, an admissible finite-volume mesh $\mathcal{T}$ is given by a family of open, polygonal, and convex subsets $K$, called \textit{control volumes}
and a family of subsets of $\overline{\Lambda}$, contained in hyperplanes of $\mathbb{R}^d$ with strictly positive $(d-1)$-dimensional Lebesgue measure, denoted by $\mathcal{E}$. The elements of $\mathcal{E}$ are called \textit{edges} (for $d=2$) or \textit{sides} (for $d=3$) of the control volumes. Finally we associate a family $\mathcal{P}=(x_K)_{K\in\mathcal{T}}$ of points in $\Lambda$ to the family of control volumes, called \textit{centers}. 
We assume $K\in\mathcal{T}$, $\mathcal{E}$ and $\mathcal{P}$ to satisfy the following properties:
\begin{itemize}
\item $\overline{\Lambda}=\bigcup_{K\in\mathcal{T}}\overline{K}$.
\item For any $K\in\mathcal{T}$, there exists a subset $\mathcal{E}_K$ of $\mathcal{E}$ such that $\partial K=\overline{K}\setminus K=\bigcup_{\sigma\in \mathcal{E}_K}\overline{\sigma}$ and $\mathcal{E}=\bigcup_{K\in\mathcal{T}}\mathcal{E}_K$. $\mathcal{E}_K$ is called the set of edges of $K$ for $d=2$ and sides for $d=3$, respectively.
\item If $K,L\in\mathcal{T}$ with $K\neq L$ then either the $(d-1)$-dimensional Lebesgue measure of $\overline{K}\cap\overline{L}$ is $0$  or $\overline{K}\cap\overline{L}=\overline{\sigma}$ for some $\sigma\in \mathcal{E}$, which will then be denoted by $K|L$.
\item $\mathcal{P}=(x_K)_{K\in\mathcal{T}}$ is such that $x_K\in \overline{K}$ for all $K\in\mathcal{T}$ and, if $K,L\in\mathcal{T}$ are two neighboring control volumes, $x_K\neq x_L$ and the straight line between the centers $x_K$ and $x_L$ is orthogonal to the edge $\sigma=K|L$.
\item For any $\sigma\in\mathcal{E}$ such that $\sigma\subset\partial\Lambda$, let $K$ be the control volume such that $\sigma \in\mathcal{E}_K$. If $x_K\notin\sigma$, the straight line going through $x_K$ and orthogonal to $\sigma$ has a nonempty intersection with $\sigma$.
\end{itemize}
\end{defn}
Examples of admissible meshes are triangular meshes for $d=2$, where the condition that angles of the triangles are less than $\frac{\pi}{2}$ ensures $x_K\in K$. For $d=3$, Voronoi meshes are admissible. See \cite{EGH00}, Example 9.1 and 9.2 for more details.
In the following, we will use the following notation:
\begin{itemize}
\item $h=\operatorname{size}(\mathcal{T})=\sup\{\operatorname{diam}(K): K\in\mathcal{T}\}$ the mesh size.
\item $\mathcal{E}_{\operatorname{int}}:=\{\sigma\in\mathcal{E}:\sigma\nsubseteq \partial\Lambda\}$, $\mathcal{E}_{\operatorname{ext}}:=\{\sigma\in\mathcal{E}:\sigma\subseteq \partial\Lambda\}$, $\mathcal{E}^{K}_{\operatorname{int}}=\mathcal{E}_{\operatorname{int}}\cap \mathcal{E}_{K}$ for $K\in\mathcal{T}$.

\item For $K\in\mathcal{T}$, $\sigma\in\mathcal{E}$ and $d=2$ or $d=3$, let $m_K$ be the $d$-dimensional Lebesgue measure of $K$ and let $m_\sigma$ be the $(d-1)$ dimensional Lebesgue measure of $\sigma$.

\item For $K\in\mathcal{T}$, $\vec{n}_K$ denotes the unit normal vector to $\partial K$ outward to $K$ and for $\sigma \in \mathcal{E}_K$, we denote the unit vector on the edge $\sigma$ pointing out of $K$ by $\vec{n}_{K,\sigma}$.

\item Let $K,L\in\mathcal{T}$ be two neighboring control volumes. For $\sigma=K|L\in\edgesint$, let $d_{K|L}$ denote the Euclidean distance between $x_K$ and $x_L$.
\end{itemize}

Using these notations, we introduce a positive number 
\begin{align}\label{mrp}
\reg=\max\left(\mathcal N,\max_{\genfrac{}{}{0pt}{}{K \in\mathcal{T}}{\sigma\in\mathcal{E}_K}} \frac{\operatorname{diam}(K)}{d(x_K,\sigma)}\right)
\end{align}
(where $\mathcal N$ is the maximum of edges incident to any vertex) that measures the regularity of a given mesh and is useful to perform the convergence analysis of finite-volume schemes.
This number should be uniformly bounded by a constant $\chi>0$ not depending on the mesh size $h$ for the convergence results to hold, \textit{i.e.},
\begin{align}\label{mesh_regularity}
\reg \leq \chi.
\end{align}
We have in particular $\forall K,L\in \mathcal{T}$,  
\begin{align*}
\frac{h}{\dkl}\leq \reg.
\end{align*} 

\subsection{The finite volume scheme}

Let $\mathcal{T}_h$ be an admissible finite volume mesh with mesh size $h>0$. For a $\mathcal{F}_0$-measurable random element $u_0\in L^2(\Omega;L^2(\Lambda))$ we define $\mathbb{P}$-a.s. in $\Omega$ its piecewise constant spatial discretization $u_h^0= \sum_{K \in \mathcal{T}_h} u_K^0 \mathds{1}_K$, where
\begin{align*}
u_K^0= \frac{1}{m_K} \int_K u_0(x) \, dx, ~~\forall K \in \mathcal{T}_h.
\end{align*}
The finite-volume scheme we propose reads, for the initially given $\mathcal{F}_0$-measurable random vector $(u_K^0)_{K\in\mathcal{T}_h}$ induced by $u_h^0$, as follows:
For any $n \in \{1,\cdots,N\}$, given the $\mathcal{F}_{t_{n-1}}$-measurable random vector $(u^{n-1}_K)_{K\in\mathcal{T}_h}$ we search for a $\mathcal{F}_{t_{n}}$-measurable random vector $(u^{n}_K)_{K\in\mathcal{T}_h}$ such that, for almost every $\omega\in\Omega$, $(u^{n}_K)_{K\in\mathcal{T}_h}$ is solution to the following random equations
\begin{align}\label{FVS}
    \begin{aligned}
        &m_K(u_K^n - u_K^{n-1}) +  \tau \sum\limits_{\sigma = K|L \in \mathcal{E}_K^{int}} m_{\sigma} \mathbf{v}_{K, \sigma}^n f(u_{\sigma}^n) \\\
        &+\tau \sum\limits_{\sigma \in \mathcal{E}_K^{int}} \frac{m_{\sigma}}{d_{K|L}} (u_K^n - u_L^n)\\
        &= m_K g(u_K^{n-1}) \Delta_nW + \tau m_K \beta(u_K^n), ~~~\forall K \in \mathcal{T}_h,
    \end{aligned}
\end{align}
where $0=t_0<t_1<...<t_N=T$, $\tau=t_n - t_{n-1}=T/N$, $\Delta W_n:= W(t_n)-W(t_{n-1})$ for all $n \in \{1,...,N\}$,
\begin{align*}
\mathbf{v}_{K, \sigma}^n = \frac{1}{\tau m_{\sigma}}\int_{t_{n-1}}^{t_n} \int_{\sigma} \mathbf{v}(t,x) \cdot \vec{n}_{K, \sigma} \, d\gamma(x) dt,
\end{align*}
where $\gamma$ denotes the $(d-1)$-dimensional Lebesgue measure and $u_{\sigma}^n$ denotes the upstream value at time $t_n$ with respect to $\sigma$ defined as follows: If $\sigma = K|L \in \mathcal{E}_K^{int}$ is the interface between the control volumes $K$ and $L$, $u_{\sigma}^n$ is equal to $u_K^n$ if $\mathbf{v}_{K, \sigma}^n \geq 0$ and to $u_L^n$ if $\mathbf{v}_{K, \sigma}^n <0.$
\begin{rem}
	The second term on the left-hand side of \eqref{FVS} is the classical two-point flux approximation of the Laplace operator (see~\cite[Section 10]{EGH00} for more details on the two-point flux approximation of the Laplace operator with Neumann boundary conditions). 
\end{rem}
With any finite sequence of unknowns $(w_{K})_{K\in\Tau_h}$, we may associate the piecewise constant function
\[w_h(x)=\sum_{K\in \Tau_h} w_K\mathds{1}_K(x), ~x\in\Lambda,\]
where $\mathds{1}_K$ denotes the standard indicator function on $K$, i.e., $\mathds{1}_K(x)=1$ for $x \in K$ and $\mathds{1}_K(x)=0$ for $x \notin K$.
The discrete $L^2$-norm for $w_h$ is then given by
\[\Vert w_h\Vert_2^2=\sum_{K\in\Tau_h}m_K |w_K|^2\]
and we may define its discrete $H^1$-seminorm by
\[
|w_h|_{1,h}^2:= \sum_{\sigma\in\mathcal{E}_{\operatorname{int}}} \frac{m_{\sigma}}{d_{K|L}} (w_K-w_L)^2.\]
Consequently, if $(u^{n}_K)_{K\in\mathcal{T}_h}$ is the solution to \eqref{FVS}, we will consider 
\[u_h^n= \sum_{K \in \mathcal{T}_h} u_K^n \mathds{1}_K\]
for any $n \in \{1,...,N\}$ a.s. in $\Omega$.
\subsection{Technical Lemmas}
In the sequel, the following discrete Gronwall inequality will be frequently used:

\begin{lem}[\cite{S69}, Lemma 1]\label{Gronwall}
Let $N \in \mathbb{N}$ and $a_n, b_n, \alpha \geq 0$ for all $n \in \{1,...,N\}$. Assume that for every $n \in \{1,...,N\}$, 
$$a_n \leq \alpha + \sum\limits_{k=0}^{n-1} a_k b_k.$$
Then, for any $n \in \{1,...,N\}$ we have
\begin{align*}
a_n \leq \alpha \exp\bigg(\sum\limits_{k=0}^{n-1}b_k \bigg).
\end{align*}
\end{lem}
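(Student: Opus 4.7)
The plan is to give a short induction proof based on the recursion satisfied by the majorant $S_n:=\alpha+\sum_{k=0}^{n-1}a_kb_k$. The hypothesis reads simply $a_n\le S_n$ for every $n\in\{1,\dots,N\}$, so a bound on the $S_n$ immediately yields the desired bound on $a_n$. (The statement implicitly needs $a_0\le \alpha=S_0$ in order for the induction to start at $n=1$; I would note this as the sole reading required of the hypothesis beyond what is written.)

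First I would compute the one-step increment
\begin{equation*}
S_{n+1}-S_n = a_n b_n \le S_n b_n,
\end{equation*}
which uses $a_n\le S_n$ (i.e.\ the hypothesis for $n\ge 1$, together with $a_0\le \alpha=S_0$ for $n=0$). Rearranging gives
\begin{equation*}
S_{n+1}\le (1+b_n)\,S_n \le e^{b_n}\,S_n,
\end{equation*}
where the last step uses the elementary inequality $1+x\le e^x$ for $x\ge 0$, valid since $b_n\ge 0$.

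Next I would iterate this recursion from $n=0$: a straightforward induction yields
\begin{equation*}
S_n \le S_0 \prod_{k=0}^{n-1} e^{b_k} = \alpha\exp\!\Bigl(\sum_{k=0}^{n-1}b_k\Bigr).
\end{equation*}
Combining with $a_n\le S_n$ (the standing hypothesis) finishes the proof.

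There is no substantive obstacle here; the lemma is a textbook discrete Gronwall, and the only subtlety is the initialisation of the induction, which is handled by interpreting the hypothesis at $n=0$ as $a_0\le \alpha$ (consistent with the convention that an empty sum vanishes). In the applications later in the paper, $a_0$ will typically be the initial datum and this bound will hold trivially, so the reading is harmless.
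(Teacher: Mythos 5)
Your proof is correct, and there is nothing in the paper to compare it against: the paper states this lemma with a citation to \cite{S69} and gives no proof of its own. Your argument via the majorant $S_n=\alpha+\sum_{k=0}^{n-1}a_kb_k$, the one-step bound $S_{n+1}\le(1+b_n)S_n\le e^{b_n}S_n$, and iteration is the standard one and is complete.

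Your side remark about the initialisation is not a pedantic quibble but a genuine observation: as literally transcribed, the statement imposes no condition on $a_0$ (the nonnegativity and the recursive hypothesis are both quantified only over $n\in\{1,\dots,N\}$, while the sum reaches down to $k=0$), and without $a_0\le\alpha$ the conclusion is false. For instance, with $\alpha=1$, $b_0=1$, $a_0=100$ the hypothesis at $n=1$ permits $a_1=101$, which exceeds $\alpha e^{b_0}=e$. Your reading --- extend the hypothesis to $n=0$, where it degenerates to $a_0\le\alpha$ by the empty-sum convention --- is the right repair, and it is consistent with every invocation of the lemma later in the paper, where $a_0$ is an initial datum already absorbed into $\alpha$ (e.g.\ in Lemma \ref{Lemma 250923_01} one has $a_0=\erww{\Vert v^0\Vert_{L^2}^2}\le 2\erww{\Vert v^0\Vert_{L^2}^2}+4C_gT=\alpha$).
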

Moreover, we use the following version of discrete Poincar\'{e} inequality:
\begin{lem}[\cite{B-CC-HF15}, Theorem 3.6, see also \cite{Flore21}, Lemma 1]\label{Lemma PI}
There exists a constant $C_p>0$, only depending on $\Lambda$, such that for all admissible meshes $\mathcal{T}_h$ and for all piecewise constant functions $w_h=(w_K)_{K \in \mathcal{T}_h}$ we have
\begin{align}\label{291123_01}
\Vert w_h \Vert_2^2 \leq C_p |w_h|_{{1, h}}^2 + 2|\Lambda|^{-1} \bigg( \int_{\Lambda} w_h(x) \, dx \bigg)^2. \tag{PI}
\end{align}
\end{lem}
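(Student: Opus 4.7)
The plan is classical: split $w_h$ into its mean and its zero-mean part, reduce to a discrete Poincar\'e--Wirtinger inequality for the zero-mean part, and establish that inequality by a segment-crossing / Fubini argument that exploits the orthogonality property built into admissible meshes.

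Set $\bar w := |\Lambda|^{-1}\int_\Lambda w_h(x)\,dx$ and $v_h := w_h - \bar w$, so that $\int_\Lambda v_h = 0$. Using $(a+b)^2\leq 2a^2+2b^2$ one obtains $\|w_h\|_2^2 \leq 2\|v_h\|_2^2 + 2\|\bar w\|_2^2 = 2\|v_h\|_2^2 + 2|\Lambda|^{-1}\bigl(\int_\Lambda w_h\bigr)^2$. Since adding a constant does not change any difference $w_K - w_L$, the seminorm is translation invariant, $|v_h|_{1,h} = |w_h|_{1,h}$. It therefore suffices to establish the mean-zero Poincar\'e--Wirtinger estimate $\|v_h\|_2^2 \leq (C_p/2)|v_h|_{1,h}^2$ for piecewise constants with $\int_\Lambda v_h=0$, where $C_p$ depends only on $\Lambda$ and the mesh regularity bound $\chi$ from \eqref{mesh_regularity}; together with the decomposition above this yields \eqref{291123_01}.

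For the zero-mean case I would use the identity $2|\Lambda|\|v_h\|_2^2 = \int_\Lambda\int_\Lambda (v_h(x)-v_h(y))^2\,dx\,dy$, which follows by expanding the square and using $\int v_h=0$. For $x\in K$ and $y\in L$, I join $x$ to $y$ by the straight segment $[x,y]$ and telescope $v_h(y)-v_h(x)$ across the interior edges $\sigma\in\edgesint$ crossed by the segment; a weighted Cauchy--Schwarz then separates the combinatorics from the weights $m_\sigma/d_{K|L}$. The main obstacle is the resulting geometric Fubini step: after integrating over $(x,y)\in\Lambda\times\Lambda$, one must show that the indicator sums $\sum_{\sigma}\mathds{1}_{[x,y]\cap\sigma\neq\emptyset}$ conspire to recover precisely the seminorm $|v_h|_{1,h}^2 = \sum_{\sigma=K|L}(m_\sigma/d_{K|L})(v_K-v_L)^2$ on the right-hand side. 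This is handled by parametrising $[x,y]$ and observing that, for a fixed interior edge $\sigma = K|L$, the set of pairs $(x,y)$ whose segment crosses $\sigma$ fills a tube of $2d$-dimensional Lebesgue measure bounded by $m_\sigma d_{K|L}\operatorname{diam}(\Lambda)^{d+1}$ up to a constant depending only on $\chi$; the orthogonality $\overline{x_Kx_L}\perp\sigma$ from Definition~\ref{defmesh}, combined with \eqref{mesh_regularity}, is precisely what makes $d_{K|L}$ the natural length appearing in this tube measure. A shorter alternative I would keep as a backup is a duality argument: test $v_h$ against an arbitrary $\phi\in L^2(\Lambda)$ through the solution of the Neumann problem $-\Delta\psi = \phi - \bar\phi$ with $\int\psi=0$, apply the discrete Green's formula, and invoke elliptic regularity on the polygonal domain $\Lambda$; this trades the combinatorial bookkeeping for standard PDE estimates but gives the same $C_p$ depending only on $\Lambda$ and $\chi$.
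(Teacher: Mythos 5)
The paper itself offers no proof of this lemma: it is imported verbatim from the literature (\cite{B-CC-HF15}, Theorem~3.6; see also Lemma~10.2 in \cite{EGH00}), so there is no internal argument to compare against. Your overall strategy --- mean/zero-mean splitting, translation invariance of $|\cdot|_{1,h}$, the identity $2|\Lambda|\Vert v_h\Vert_2^2=\int_\Lambda\int_\Lambda(v_h(x)-v_h(y))^2\,dx\,dy$, telescoping along $[x,y]$ and a weighted Cauchy--Schwarz --- is exactly the strategy of those references, and the reduction in your first paragraph is correct as far as it goes.

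There are, however, two genuine gaps in the sketch. First, the constant you produce is too weak: you announce $C_p=C_p(\Lambda,\chi)$, whereas the lemma asserts that $C_p$ depends \emph{only} on $\Lambda$, uniformly over all admissible meshes with no regularity hypothesis (the sentence after \eqref{291123_01} makes clear that \eqref{mesh_regularity} is not assumed here). Mesh regularity is in fact not needed: since each $K$ is convex with $x_K\in\overline K$, the cone decomposition gives $\sum_{\sigma\in\mathcal{E}_K}m_\sigma\, d(x_K,\sigma)=d\,m_K$, and the orthogonality condition of Definition~\ref{defmesh} gives $\dkl=d(x_K,\sigma)+d(x_L,\sigma)$, whence $\sum_{\sigma=K|L\in\edgesint}m_\sigma\dkl\le d\,|\Lambda|$. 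It is this identity, together with the transversality factor $|\vec n_{K,\sigma}\cdot(y-x)|/|y-x|$ entering through the Cauchy--Schwarz weights, that closes the Fubini step --- not a tube-measure estimate; your stated bound $m_\sigma\dkl\operatorname{diam}(\Lambda)^{d+1}$ is in any case dimensionally inconsistent with a $2d$-dimensional Lebesgue measure. Second, the segment argument as written presupposes $[x,y]\subset\Lambda$, i.e.\ convexity of $\Lambda$, while the paper assumes only that $\Lambda$ is bounded, polygonal and connected. For non-convex $\Lambda$ the segment may leave the domain, and telescoping over the \emph{interior} edges it crosses no longer reconstructs $v_h(y)-v_h(x)$; the cited proofs repair this by covering $\Lambda$ with finitely many convex polygonal subsets and chaining the convex-case estimate through their overlaps, which is precisely where the dependence of $C_p$ on the geometry of $\Lambda$ enters. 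Your duality backup has the analogous defect: $H^2$ regularity for the Neumann problem fails on polygons with re-entrant corners, so it too is restricted to convex $\Lambda$ as stated.
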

We will use the upcoming version of the discrete integration by parts rule.
\begin{rem}[\cite{BNSZ23}, Remark 2.8]\label{DIBP}
Consider two finite sequences $(w_K)_{K \in \Tau_h}, (v_K)_{K \in \Tau_h}$. Then
\begin{align*}
\sum\limits_{K \in \Tau_h} \sum\limits_{\sigma\in\mathcal{E}_K^{\operatorname{int}}} \frac{m_{\sigma}}{d_{K|L}} (w_K - w_L)v_K = \sum\limits_{\sigma\in\mathcal{E}^{\operatorname{int}}} \frac{m_{\sigma}}{d_{K|L}} (w_K - w_L)(v_K - v_L).
\end{align*}
\end{rem}
We will use frequently the following Lemmas already mentioned and proven in \cite{SZ25}. \enlargethispage{1\baselineskip}
\begin{lem}[\cite{SZ25}, Lemma 9.1]\label{Theorem 110124_01}
Let $d\in \{2,3\}$. Then, for any $u \in H^2(\Lambda)$ satisfying the weak homogeneous Neumann boundary condition we have
\begin{align*}
\Vert u \Vert_{H^2(\Lambda)}^2 \leq 12(\Vert \Delta u \Vert_2^2 + \Vert u \Vert_2^2),
\end{align*}
where $\Delta$ denotes the Laplace operator on $H^1(\Lambda)$ associated with the weak formulation of the homogeneous Neumann boundary condition.\\
Especially, for any random variable $u : \Omega \to H^2(\Lambda)$ we obtain
\begin{align*}
\Vert u \Vert_{H^2(\Lambda)}^2 \leq 12(\Vert \Delta u \Vert_2^2 + \Vert u \Vert_2^2) ~~~\mathbb{P}\text{-a.s. in}~\Omega.
\end{align*}
\end{lem}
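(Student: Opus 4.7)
The plan is to decompose the squared $H^2$-norm as
\[
\|u\|_{H^2(\Lambda)}^2 = \|u\|_2^2 + \|\nabla u\|_2^2 + \sum_{i,j=1}^d \|\partial_{ij} u\|_2^2
\]
and to bound each block separately by a multiple of $\|u\|_2^2 + \|\Delta u\|_2^2$. The factor $12$ will then come out of combining the resulting (non-sharp) constants.

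The first step is to deal with the first-order part. Since $u$ satisfies the weak homogeneous Neumann boundary condition, an integration by parts gives
\[
\|\nabla u\|_2^2 = -\int_\Lambda u\,\Delta u\,dx,
\]
and Young's inequality yields $\|\nabla u\|_2^2 \leq \tfrac{1}{2}\|u\|_2^2 + \tfrac{1}{2}\|\Delta u\|_2^2$. The second step, which is the heart of the matter, is to show
\[
\sum_{i,j=1}^d \|\partial_{ij} u\|_2^2 \;\leq\; \|\Delta u\|_2^2.
\]
For $u\in C^\infty(\overline{\Lambda})$ satisfying $\nabla u\cdot\vec{n}=0$ on $\partial\Lambda$, one integrates by parts twice to obtain the classical identity
\[
\int_\Lambda |D^2 u|^2\,dx \;=\; \int_\Lambda (\Delta u)^2\,dx \;-\; \int_{\partial \Lambda} B(\nabla u)\,d\sigma,
\]
where the boundary term $B(\nabla u)$ is non-negative whenever $\Lambda$ is convex (a polygonal version of Grisvard's second fundamental form identity). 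Thus the stated inequality holds for such test functions, and the general case $u\in H^2(\Lambda)$ with weak Neumann condition is obtained by a density argument through smooth approximations compatible with the boundary condition.

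Combining the two estimates one finds $\|u\|_{H^2(\Lambda)}^2 \leq \tfrac{3}{2}(\|u\|_2^2 + \|\Delta u\|_2^2)$, which is comfortably inside the claimed constant $12$; the generous factor allows one to absorb a suboptimal treatment of each derivative block (e.g.\ counting $\partial_{ij}$ and $\partial_{ji}$ twice, or estimating $\|\nabla u\|_2^2$ via Cauchy--Schwarz without optimization) while remaining dimension-free for $d\in\{2,3\}$. The pointwise a.s.\ statement for a random variable $u\colon\Omega\to H^2(\Lambda)$ is then immediate by applying the deterministic inequality $\omega$-by-$\omega$.

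The main obstacle is the second step: justifying $\|D^2 u\|_2^2 \leq \|\Delta u\|_2^2$ on a polygonal domain, where the boundary is only Lipschitz and the standard smooth-boundary integration-by-parts argument does not apply directly. This is where convexity of $\Lambda$ (or, equivalently, the assumption that the corner angles do not exceed $\pi$) enters to guarantee the correct sign of the boundary contribution, together with a careful density/approximation procedure to extend the identity from smooth functions satisfying the Neumann condition to general $u\in H^2(\Lambda)$.
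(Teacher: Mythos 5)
Your sketch is correct and follows the same route as the cited proof (the paper itself defers entirely to [SZ25, Lemma~9.1], which argues exactly this way): split off $\Vert\nabla u\Vert_2^2=-\int_\Lambda u\,\Delta u\,dx$ via the weak Neumann condition, and control the Hessian by Grisvard's identity $\int_\Lambda|D^2u|^2\,dx\le\int_\Lambda(\Delta u)^2\,dx$ on convex polygonal domains, the generous constant $12$ absorbing the slack. The one point worth flagging is that your second step genuinely requires $\Lambda$ to be convex --- a hypothesis the paper never states explicitly but assumes implicitly throughout (e.g.\ when invoking Theorem~3.2.1.3 of Grisvard for $H^2$-regularity of the Euler scheme), so you are right to single out the convexity/density step as the only place where real work is hidden.
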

\begin{lem}[\cite{SZ25}, Lemma 9.2]\label{Lemma 240124_01}
Let $u \in H^2(\Lambda)$, $\mathcal{T}$ an admissible mesh, $K \in \mathcal{T}$ and $y \in K$. Then, for any $ 1 \leq q \leq 2$, there exists a constant $C=C(q, \Lambda)>0$ such that
\begin{align*}
\int_K |u(x) - u(y) |^q \, dx \leq C h^q \Vert u \Vert_{W^{2,q}(K)}^q.
\end{align*}
Especially, for any function $v$ of the form $v(x):= \sum\limits_{K \in \mathcal{T}} u(y_K) \mathds{1}_{K}(x)$, where $y_K \in K$ and $x \in \Lambda$, we have
\begin{align*}
\Vert u - v \Vert_{L^2(\Lambda)}^2 \leq C(2,\Lambda) h^2 \Vert u \Vert_{H^2(\Lambda)}^2.
\end{align*}
\end{lem}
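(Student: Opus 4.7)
The plan is to prove the cell-wise estimate by a scaling argument to a reference configuration combined with a Poincar\'{e}--Wirtinger inequality on convex sets and a Sobolev embedding, and then to deduce the global approximation estimate by summing over all control volumes.

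First, I would reduce to smooth $u$ by density of $C^{\infty}(\bar{K})$ in $W^{2,q}(K)$. Since the cells $K$ are convex and $d \le 3$, the embedding $H^2(\Lambda) \hookrightarrow C^0(\bar{\Lambda})$ guarantees that the pointwise value $u(y)$ is meaningful. The central decomposition I would use is
\begin{align*}
u(x)-u(y) = \bigl(u(x) - \bar{u}_K\bigr) - \bigl(u(y) - \bar{u}_K\bigr), \qquad \bar{u}_K := \frac{1}{m_K}\int_K u(z)\,dz,
\end{align*}
which splits the task into an $L^q$-type contribution in $x$ and a pointwise contribution at $y$.

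Then I would estimate each piece separately. For the first, the Poincar\'{e}--Wirtinger inequality on the convex set $K$ yields $\|u - \bar{u}_K\|_{L^q(K)}^q \le C\,h^q\|\nabla u\|_{L^q(K)}^q$ with a constant depending only on $q$ and on $\operatorname{diam}(\Lambda)$. For the pointwise difference $|u(y) - \bar{u}_K|$, I would pass to a reference cell $\hat{K}$ via the affine change of variables $\hat{u}(\hat{x}) := u(x_K + h\hat{x})$, then combine the continuous embedding $W^{2,q}(\hat{K}) \hookrightarrow C^0(\hat{K})$ with a Poincar\'{e}--Wirtinger inequality on $\hat{K}$ to obtain
\begin{align*}
\bigl|\hat{u}(\hat{y}) - \overline{\hat{u}}_{\hat{K}}\bigr| \le C\,\bigl(\|\nabla\hat{u}\|_{L^q(\hat{K})} + \|D^2\hat{u}\|_{L^q(\hat{K})}\bigr),
\end{align*}
and finally rescale to $K$ to arrive at
\begin{align*}
m_K\,|u(y) - \bar{u}_K|^q \le C\,\bigl(h^q\|\nabla u\|_{L^q(K)}^q + h^{2q}\|D^2 u\|_{L^q(K)}^q\bigr).
\end{align*}
Adding the two contributions and using $h \le \operatorname{diam}(\Lambda)$ to absorb $h^{2q}$ into $h^q$ produces the desired cell-wise bound. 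The second, global statement would then follow by applying the first with $q = 2$ on each control volume and summing, since
\begin{align*}
\|u - v\|_{L^2(\Lambda)}^2 = \sum_{K\in\mathcal{T}}\int_K |u(x) - u(y_K)|^2\,dx \le C\,h^2\sum_{K\in\mathcal{T}}\|u\|_{H^2(K)}^2 = C\,h^2\|u\|_{H^2(\Lambda)}^2.
\end{align*}

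The main obstacle I anticipate is the pointwise control of $|u(y) - \bar{u}_K|$: the embedding $W^{2,q}(\hat{K}) \hookrightarrow C^0(\hat{K})$ requires $2q > d$, which is comfortable when $q = 2$ in both $d = 2$ and $d = 3$ (and this is the only case actually needed for the global approximation estimate), but it becomes borderline for $q = 1$ in three dimensions, where a refined Morrey-type argument or a direct use of the assumed $H^2$-regularity of $u$ on all of $\Lambda$ would be required instead. The uniformity of the scaling constants across cells rests on the admissibility of $\mathcal{T}$ and on the mesh regularity bound \eqref{mesh_regularity}.
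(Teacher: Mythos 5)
The paper does not actually prove this lemma---it is quoted from \cite{SZ25} and used only through its $q=2$ case---so there is no internal proof to compare against; judging your argument on its own terms, the overall strategy (split $u(x)-u(y)$ through the cell average $\bar u_K$, bound $\Vert u-\bar u_K\Vert_{L^q(K)}$ by Poincar\'e--Wirtinger on the convex set $K$, control the pointwise deviation $|u(y)-\bar u_K|$ by a Sobolev embedding after rescaling, then sum over cells) is the natural one and the summation step is fine. The first genuine gap is the rescaling step. An admissible mesh in the sense of Definition \ref{defmesh} consists of arbitrary convex polytopes: there is no fixed reference element, and after the dilation $\hat u(\hat x)=u(x_K+h\hat x)$ the set $\hat K=(K-x_K)/h$ is a convex subset of the unit ball whose shape varies with $K$. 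The constants in $W^{2,q}(\hat K)\hookrightarrow C^0(\overline{\hat K})$ and in the Poincar\'e--Wirtinger inequality on $\hat K$ are a priori shape-dependent (for a sliver $[0,1]\times[0,\varepsilon]$ the embedding constant already degenerates like $\varepsilon^{-1/q}$ on constant functions, and you give no argument that convexity rescues the mean-zero case uniformly). You acknowledge this and fall back on the mesh-regularity bound \eqref{mesh_regularity}, but the lemma asserts $C=C(q,\Lambda)$ with no dependence on $\reg$---and the paper is elsewhere careful to flag exactly which constants do depend on $\operatorname{reg}(\Tau_h)$ (cf.\ $C_7$, $C_8$ in Lemma \ref{Lemma 260923_01}). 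So as written you prove a strictly weaker statement. A shape-free constant requires using convexity quantitatively, e.g.\ via the representation $|u(y)-\bar u_K|\le \frac{(\operatorname{diam}K)^d}{d\,m_K}\int_K|\nabla u(z)|\,|y-z|^{1-d}\,dz$ valid on convex sets together with a second-order refinement, or via the known diameter-only bounds for Poincar\'e and Sobolev constants on convex domains.

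The second gap concerns the range of $q$, and here the problem is not just with your method. For $d=3$ and $q<3/2$ the cell-wise inequality is actually false: taking $u_\delta(x)=\phi(|x-y|/\delta)$ with $\phi$ smooth, $\phi(0)=1$ and $\operatorname{supp}\phi\subset[0,1]$, one computes
\begin{align*}
\int_K|u_\delta(x)-u_\delta(y)|^q\,dx \longrightarrow m_K>0, \qquad \Vert u_\delta\Vert_{W^{2,q}(K)}^q\sim \delta^{3-2q}\longrightarrow 0 \quad (q<3/2),
\end{align*}
so no constant can work uniformly over $u\in H^2(\Lambda)$. This is precisely the regime where $W^{2,q}\not\hookrightarrow C^0$, which you correctly flag as ``borderline''; it is in fact fatal, and your suggested retreat to the $H^2$-norm proves a different inequality. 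Since only $q=2$ is ever used (where $2q>d$ in both $d=2$ and $d=3$), this costs the paper nothing, but your proof---and, for $d=3$, the lemma as literally stated---should be restricted to $q>d/2$.
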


\section{Regularity assumptions on the data and main result}
In order to prove our result, we need the following additional regularity assumptions on the data:
\begin{Assumptions}
    \item \label{A1} $~g \in \mathcal{C}^2(\mathbb{R})$ such that $g'$ and $g''$ are bounded on $\mathbb{R}$. 
    \item \label{A2} $~u_0 \in L^2(\Omega;H^2(\Lambda)) \cap L^{16}(\Omega; H^1(\Lambda))$ is $\mathcal{F}_0$-measurable and satisfies the weak homogeneous Neumann boundary condition.
    \item  \label{A3}  $~f \in \mathcal{C}^2(\mathbb{R})$ non-decreasing such that $f'$ and $f''$ are bounded on $\mathbb{R}$,  $f(0) = 0$.
    \item  \label{A4} $~\beta \in \mathcal{C}^2(\mathbb{R})$ such that $\beta'$ and $\beta''$ are bounded on $\mathbb{R}$,  $\beta(0) = 0$.
    \item  \label{A5} $\mathbf{v} \in C^1([0,T] \times \Lambda; \mathbb{R}^d)$ such that ${\operatorname{div}} (\mathbf{v})=0$ in $[0,T] \times \Lambda$ and $ \mathbf{v} \cdot \vec{n}=0$ on $[0,T] \times \partial \Lambda$.
\end{Assumptions}

\begin{thm}\label{main theorem}
Let \ref{A1} - \ref{A5} be satisfied and let $u$ be the variational solution of the stochastic non-linear parabolic equation \eqref{equation}
in the sense of Definition \ref{solution}. 
For $N \in \mathbb{N}$ and $h>0$, let $\Tau_h$ be an admissible mesh and $(u_h^n)_{n=1,...,N}$ the solution of the finite volume scheme \eqref{FVS}. Then, there exists a constant $\Upsilon>0$ depending on the mesh regularity $\operatorname{reg}(\Tau_h)$ but not depending on $n, N$ and $h$ explicitly such that
\begin{align*}
\sup_{t \in [0,T]} \erww{\Vert u(t) - u_{h,N}^r(t) \Vert_2^2} \leq \Upsilon(\tau + h^2 + \frac{h^2}{\tau}),
\end{align*}
where $u_{h,N}^r(t)=\sum\limits_{n=1}^N u_h^n \mathds{1}_{[t_{n-1}, t_n)}(t)$, $u_{h,N}^r(T)= u_h^N$. If \eqref{mesh_regularity} is satisfied, $\Upsilon$ may depend on $\chi$ and the dependence of $\Upsilon$ on $\operatorname{reg}(\Tau_h)$ can be omitted.
\end{thm}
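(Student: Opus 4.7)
The plan is to introduce as intermediate object the solution $(U^n)_{n=0,\dots,N}$ of a time-only semi-implicit Euler scheme (with $U^0 = u_0$) posed in $H^1(\Lambda)$ with weak homogeneous Neumann boundary condition, namely
\begin{align*}
U^n - U^{n-1} - \tau \Delta U^n + \tau\,\mathrm{div}(\mathbf{v}(t_n) U^n) = g(U^{n-1}) \Delta_n W + \tau \beta(U^n),
\end{align*}
and then to split the error via the triangle inequality
\begin{align*}
\|u(t_n) - u_h^n\|_2 \leq \|u(t_n) - U^n\|_2 + \|U^n - \Pi_h U^n\|_2 + \|\Pi_h U^n - u_h^n\|_2,
\end{align*}
where $\Pi_h$ is the centered projection $(\Pi_h v)(x) = \sum_K v(x_K) \mathds{1}_K(x)$. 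Each of the three pieces would be estimated separately (matching the structure announced for Section~6), and Theorem~\ref{main theorem} would follow after taking supremum in $n$ and going from the discrete times $t_n$ to the piecewise constant reconstruction $u_{h,N}^r(t)$ using the temporal H\"older continuity of $u$.

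For the first piece I would rely on the Section~5 regularity result, namely that assumptions \ref{A1}--\ref{A5} propagate from $u_0 \in L^2(\Omega;H^2(\Lambda))$ into $\sup_{t \leq T} \mathbb{E}\|u(t)\|_{H^2}^2 < \infty$ and a mean-square temporal H\"older estimate $\mathbb{E}\|u(t) - u(s)\|_2^2 \lesssim |t-s|$. Writing the equation for the time error $e^n := u(t_n) - U^n$ on $[t_{n-1},t_n]$, applying It\^o's formula to $\|e^n\|_2^2$, and using the Lipschitz property of $g$, $\beta$, $f$, the divergence-free skew-symmetry of the convection term, and the monotonicity of $-\Delta$, yields a recursion of the type $\mathbb{E}\|e^n\|_2^2 \leq (1 + C\tau)\mathbb{E}\|e^{n-1}\|_2^2 + C\tau^2$. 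The discrete Gronwall Lemma~\ref{Gronwall} then gives $\sup_n \mathbb{E}\|u(t_n) - U^n\|_2^2 = O(\tau)$.

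For the second piece, Lemma~\ref{Lemma 240124_01} gives $\|U^n - \Pi_h U^n\|_2^2 \leq C h^2 \|U^n\|_{H^2}^2$, and an a-posteriori estimate for the semi-implicit Euler scheme (Section~4), combined with Lemma~\ref{Theorem 110124_01}, transfers $H^2$-control from $u_0$ to $U^n$ uniformly in $n$, producing $\sup_n \mathbb{E}\|U^n - \Pi_h U^n\|_2^2 = O(h^2)$. For the third piece I would subtract \eqref{FVS} from the cell-averaged Euler equation tested with $\mathds{1}_K$, set $\varepsilon_K^n := u_K^n - U^n(x_K)$, and expand $\|\varepsilon_h^n\|_2^2 - \|\varepsilon_h^{n-1}\|_2^2$ using the identity $2(a-b)a = a^2 - b^2 + (a-b)^2$ together with the discrete integration by parts of Remark~\ref{DIBP}. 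The diffusion term produces $|\varepsilon_h^n|_{1,h}^2$ on the left-hand side plus a flux-consistency residual of size $O(h \|U^n\|_{H^2})$; the convection term, thanks to $\operatorname{div}\mathbf{v} = 0$ and the upwind monotonicity inequality (and the restriction $f = \mathrm{id}$), produces a non-negative dissipation contribution plus an $O(h\|U^n\|_{H^1})$ residual; the stochastic and load terms, after BDG, contribute $O(\tau h^2)$ per step. Summing and applying Lemma~\ref{Gronwall} yields the bound.

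The main obstacle is precisely the parasitic $h^2/\tau$. It arises in the third piece from the defect between the cell-mean of the time increment $U^n - U^{n-1}$ and its pointwise value at $x_K$: since $U^n$ inherits from $u$ only H\"older, not Lipschitz, regularity in time, this defect, dominated in mean square by the centered projection error of the stochastic increment $g(U^{n-1})\Delta_n W$, is of size $h^2 \cdot \mathbb{E}|\Delta_n W|^2 = O(h^2 \tau)$ per step, hence $O(h^2)$ in total; however, when this residual is paired with $\varepsilon_K^n$ and handled by Cauchy--Schwarz with weight $\tau$ in order to match the discrete Gronwall step, one factor of $\tau$ must be dropped, producing the unavoidable $h^2/\tau$ as discussed in the introduction. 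Once this term is isolated and the dissipation from the TPFA diffusion is used to absorb the $O(h)$ consistency residuals, combining the three estimates and taking $\sup_n$ yields $\sup_{t\in[0,T]}\mathbb{E}\|u(t) - u_{h,N}^r(t)\|_2^2 \leq \Upsilon(\tau + h^2 + h^2/\tau)$, with $\Upsilon$ depending on the mesh only through $\operatorname{reg}(\mathcal{T}_h)$, which under \eqref{mesh_regularity} can be replaced by the absolute constant $\chi$.
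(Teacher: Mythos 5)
Your proposal is correct and follows essentially the same route as the paper: the same semi-implicit Euler intermediate object, the same splitting $u(t)-u_h^n$ into a temporal H\"older piece, a time-discretization piece of order $O(\tau)$, a projection piece of order $O(h^2)$, and a space-discretization piece treated by discrete integration by parts, upwind monotonicity and the discrete Gronwall lemma, together with the correct identification of the projection error of the stochastic time increment $v^n-v^{n-1}$ as the source of the parasitic $h^2/\tau$. The only cosmetic difference is that the paper channels the TPFA flux consistency through the elliptic projection of $v^n$ (splitting $\tilde e_h^n=\dot e_h^n+\hat e_h^n$) rather than absorbing an $O(h\Vert v^n\Vert_{H^2})$ flux residual directly into the discrete $H^1$-dissipation, and it mollifies the initial datum to secure the $H^3$-type bounds on $v^n$ needed for the convection term; both devices yield the same estimates.
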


\begin{rem}
The error estimate of Theorem \ref{main theorem} also applies to  
\begin{align*}
u_{h,N}^l(t)=\sum\limits_{n=1}^N u_h^{n-1} \mathds{1}_{[t_{n-1}, t_n)}(t), ~u_{h,N}(T)=u_h^N.
\end{align*}
instead of $u_{h,N}^r$.
\end{rem}
\begin{rem}
		As already mentioned in \cite{MajPro18}, it is easily possible to generalize the error analysis of our scheme to a cylindrical Wiener process in $L^2(D)$. In this case, we consider a diffusion operator $G(u)$, where $G$ is a Hilbert-Schmidt operator on $L^2(D)$ such that, if $(e_k)_k$ is an orthonormal basis of $L^2(D)$, for any $k\in\mathbb{N}$, $G(u)(e_k)=g_k(u)$, where $g_k\in\mathcal{C}^2(\mathbb{R})$ satisfies
		\[\sum_{k=1}^{\infty}\Vert g'_k\Vert^2_{\infty}<+\infty, \quad  \sum_{k=1}^{\infty}\Vert g''_k\Vert^2_{\infty}<+\infty.\]
		Since the noise is discretized with respect to the time variable, we limit ourselves to the $1$-d case so as not to overload the notation.
\end{rem}
\section{Semi-implicit Euler scheme for a stochastic non-linear parabolic equation}
Now, for a $\mathcal{F}_0$-measurable random variable $v^0\in L^2(\Omega; L^2(\Lambda))$, we consider the semi-implicit Euler scheme
\begin{align}\label{ES}
\begin{cases} v^n - v^{n-1} - \tau \Delta v^n +  \tau  \operatorname{div} (\mathbf{v}^n f(v^{n})) = g(v^{n-1}) \Delta_n W +  \tau \beta(v^{n}) ~&\text{in}~ \Omega \times \Lambda ,\\
\nabla v^n \cdot \vec{n} =0~&\text{on}~\Omega \times \partial \Lambda,
\end{cases} \tag{ES}
\end{align}
where $0=t_0<t_1<...<t_N=T$ and $\tau=t_n - t_{n-1}=\frac{T}{N}$ for all $n \in \{1,...,N\}$.\\
From the Theorem of Stampacchia (see, e.g., \cite[Thm. 5.6]{BrezisFA}) or arguing as in \cite{SWZ19} it follows that there exists a unique $(\mathcal{F}_{t_n})$-measurable solution $v^n$ in $L^2(\Omega;H^1(\Lambda))$ to \eqref{ES} in the weak sense.\\ 
Moreover, from Theorem 3.2.1.3 in \cite{Grisvard85} we obtain $v^n \in L^2(\Omega; H^2(\Lambda))$, hence \eqref{ES} holds a.e. in $\Omega\times \Lambda$ and the homogeneous Neumann boundary condition holds in the weak sense, i.e., for any $w \in H^1(\Lambda)$ we have
\begin{align*}
\int_{\partial \Lambda} w \nabla v^n \cdot \vec{n} \, dS=0,
\end{align*}
where the integrands $\nabla v^n$ and $w$ have to be understood in the sense of trace operators.\\
Furthermore, for any random variable $u : \Omega \to H^2(\Lambda)$ satisfying the weak homogeneous Neumann boundary conditions we have
\begin{align}\label{eq 041023_02}
\Vert u \Vert_{H^2(\Lambda)}^2 \leq 12(\Vert \Delta u \Vert_2^2 + \Vert u \Vert_2^2) ~~~\mathbb{P}\text{-a.s. in}~\Omega,
\end{align}
see Appendix, Lemma \ref{Theorem 110124_01}.
\begin{lem}\label{Lemma 250923_01}
 There exist constants $C_1, K_1 > 0$ and $N_1 \in \N$ such that for any $N \geq N_1$ we have
\begin{align*}
\sup_{n \in \{1,...,N\}} \erww{\Vert v^n \Vert_{L^2}^{2}} + \sum\limits_{n=1}^N \erww{\Vert v^n - v^{n-1} \Vert_{L^2}^{2}}\leq K_1 \erww{\Vert v^0 \Vert_{L^2}^{2}} \leq C_1.
\end{align*}
\end{lem}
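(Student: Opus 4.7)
The plan is to test equation \eqref{ES} against $v^n$ in $L^2(\Lambda)$ and carry out the standard discrete energy argument for a semi-implicit scheme. Pointwise $\mathbb{P}$-almost surely, I first apply the elementary identity $(a-b)a = \tfrac12(a^2 + (a-b)^2 - b^2)$ to the accumulation term, integrate the Laplacian by parts using the weak Neumann condition available for $v^n \in H^2(\Lambda)$, and eliminate the convective contribution by two successive integrations by parts: denoting by $F$ an antiderivative of $f$,
\begin{align*}
\int_\Lambda \operatorname{div}(\mathbf{v}^n f(v^n))\, v^n\,dx = -\int_\Lambda \mathbf{v}^n\cdot\nabla F(v^n)\,dx = \int_\Lambda \operatorname{div}(\mathbf{v}^n)\, F(v^n)\,dx = 0,
\end{align*}
where the boundary integrals vanish thanks to $\mathbf{v}^n\cdot\vec n = 0$ on $\partial\Lambda$ and the bulk term vanishes thanks to $\operatorname{div}\mathbf{v}^n = 0$ (assumption \ref{A5}). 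This yields the discrete energy identity
\begin{align*}
\tfrac12\bigl(\Vert v^n\Vert_2^2 - \Vert v^{n-1}\Vert_2^2 + \Vert v^n - v^{n-1}\Vert_2^2\bigr) + \tau\Vert\nabla v^n\Vert_2^2 = \Delta_n W\int_\Lambda g(v^{n-1})v^n\,dx + \tau\int_\Lambda \beta(v^n)v^n\,dx.
\end{align*}

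Next I handle the right-hand side after taking expectations. Splitting $v^n = v^{n-1} + (v^n - v^{n-1})$ in the stochastic term, the part containing $v^{n-1}$ has zero expectation by the $\mathcal{F}_{t_{n-1}}$-measurability of $v^{n-1}$ combined with the independence and centering of $\Delta_n W$. Young's inequality controls the remaining cross term by $\tfrac14\Vert v^n - v^{n-1}\Vert_2^2 + |\Delta_n W|^2\Vert g(v^{n-1})\Vert_2^2$; the first summand is absorbed on the left, while the expectation of the second equals $\tau\,\erww{\Vert g(v^{n-1})\Vert_2^2}$ by independence, itself bounded by $\tau(2L_g^2\erww{\Vert v^{n-1}\Vert_2^2} + 2|g(0)|^2|\Lambda|)$ via the Lipschitz property of $g$. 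The $\beta$-term contributes at most $\tau L_\beta\Vert v^n\Vert_2^2$ thanks to $\beta(0)=0$ and Lipschitz continuity (assumption \ref{A4}). Summing from $n=1$ to an arbitrary $m\le N$ and taking expectations, the implicit contribution $\tau L_\beta\erww{\Vert v^m\Vert_2^2}$ is absorbed into the left whenever $\tau L_\beta\le \tfrac14$, which fixes the threshold $N_1$, leaving an inequality of the form
\begin{align*}
\erww{\Vert v^m\Vert_2^2} + \sum_{n=1}^m \erww{\Vert v^n - v^{n-1}\Vert_2^2} \le C\,\erww{\Vert v^0\Vert_2^2} + C' + C''\tau\sum_{n=1}^{m-1}\erww{\Vert v^n\Vert_2^2}.
\end{align*}

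Finally, the discrete Gronwall inequality (Lemma \ref{Gronwall}) applied to $a_n = \erww{\Vert v^n\Vert_2^2}$ produces a bound on $\sup_m \erww{\Vert v^m\Vert_2^2}$ uniform in $m$ and $N$; reinjecting that bound into the preceding inequality closes the estimate on $\sum_n \erww{\Vert v^n - v^{n-1}\Vert_2^2}$, and the integrability $v^0 \in L^2(\Omega;L^2(\Lambda))$ then absorbs everything into the absolute constant $C_1$. The main technical obstacle is purely constant book-keeping: the Young parameter must be chosen small enough so that the jump term $\Vert v^n-v^{n-1}\Vert_2^2$ generated by the noise is absorbed on the left, while the implicit $\beta$-contribution simultaneously forces the time-step smallness $N\ge N_1$.
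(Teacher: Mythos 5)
Your proposal is correct and follows essentially the same route as the paper: testing \eqref{ES} with $v^n$, cancelling the convective term via the antiderivative $F$ and assumption \ref{A5}, splitting the noise term as $v^n = v^{n-1} + (v^n - v^{n-1})$ so that the martingale part vanishes in expectation, absorbing the jump term and the implicit $\beta$-contribution for $\tau L_\beta \le \tfrac14$, and closing with the discrete Gronwall lemma followed by reinjection. No substantive differences.
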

\begin{proof}
 We take a test function $v^n$ in  \eqref{ES} to get
 \begin{align*}
      (v^n - v^{n-1},v^n) &- \tau (\Delta v^n, v^n) +  \tau  ({\operatorname{div}} (\mathbf{v}^n f(v^{n})), v^n) \\
      & = (g(v^{n-1})( W(t_n)-W(t_{n-1})), v^n-v^{n-1} + v^{n-1})  +  \tau (\beta(v^{n}), v^n) .
 \end{align*}
Thanks to the assumption $\mathbf{v} \cdot \vec{n} =0$ on $\partial \Lambda \times [0,T]$, by setting $F(r) := \int_0^r f(r') \, dr'$, we have
\begin{align}\label{180925_01}
\begin{aligned}
    \int_{\Lambda} \mathbf{v}^n f(v^n) \cdot \nabla v^n \, dx = \int_{\Lambda} \mathbf{v}^n \cdot \nabla F(v^n) \, dx = \int_{\Lambda} \operatorname{div}(\mathbf{v}^n) F(v^n) \, dx = 0.
\end{aligned}
\end{align}
Now, using Young's inequality, the assumptions on $\mathbf{v}$ and $\beta$,
and the identity 
   \begin{align}
    (x -y) x = \frac{1}{2}\left [x^2 - y^2 + {(x-y)}^2 \right ]~~~\forall \, x,y \in \mathbb{R}, \label{identity}
    \end{align}
  we arrive at 
\begin{align}
    \frac{1}{2} \, \bigg[\Vert v^n \Vert_{L^2}^{2} - \Vert v^{n-1} \Vert_{L^2}^{2} & + \Vert v^n - v^{n-1} \Vert_{L^2}^{2} \bigg] + \tau \left \| \nabla v^n \right \|_{L^2}^{2} \notag \\ 
    &\leq   (g(v^{n-1})( W(t_n)-W(t_{n-1})), v^n-v^{n-1} + v^{n-1}) + \tau \, L_{\beta} \|v^n \|_{L^2}^{2} . \notag 
\end{align}
 
After taking expectation and using Young's inequality yields that

\begin{align}
    &\frac{1}{2}\erww{\Vert v^n \Vert_{L^2}^{2} - \Vert v^{n-1} \Vert_{L^2}^{2} + \Vert v^n - v^{n-1} \Vert_{L^2}^{2}} + \tau \erww{\left | \nabla v^n \right \|_{L^2}^{2}} \notag \\
    &\leq  \tau \erww{\left \| g(v^{n-1})\right \|_{L^2}^{2}} + \frac{1}{4} \erww{\left \| v^n-v^{n-1}\right \|_{L^2}^{2}} + \tau \, L_{\beta} \erww{\|v^n \|_{L^2}^{2}} . \notag
\end{align}
 For fixed $n \in \left \{1,2,\cdots,N  \right \}$, we take the sum over $k=1,2,...,n$ in above inequality and obtain 
\begin{align}
    \frac{1}{2} \, &\erww{\Vert v^n \Vert_{L^2}^{2}}  + \frac{1}{4} \,\sum_{k=1}^{n}   \erww{\Vert v^k - v^{k-1} \Vert_{L^2}^{2}}   + \tau \sum_{k=1}^{n} \erww{\Vert \nabla v^k \Vert_{L^2}^{2}} \notag \\
    & \le  \frac{1}{2} \, \erww{\Vert v^0 \Vert_{L^2}^{2}} + \tau \,L_{g}^2  \sum_{k=1}^{n} \erww{\Vert v^{k-1} \Vert_{L^2}^{2}} + \tau \, \sum_{k=1}^{n} \left \| g(0)\right \|_{L^2}^{2} + \tau \, L_{\beta} \sum_{k=1}^{n}  \erww{\Vert v^k \Vert_{L^2}^{2} } \notag \\
    & \le  \frac{1}{2} \, \erww{\Vert v^0 \Vert_{L^2}^{2} } + \tau \,(L_{g}^2  + L_{\beta} ) \sum_{k=1}^{n}\erww{\Vert v^{k-1} \Vert_{L^2}^{2} } + C_g T |\Lambda| + \tau \, L_{\beta}  \erww{\Vert v^n \Vert_{L^2}^{2} } . \notag
\end{align}
Now, there exists $N_1 \in \N$ such that $\tau L_{\beta} \leq \frac{1}{4}$. Hence, for $N \geq N_1$ and $\tau = \frac{T}{N}$ we have $\frac{1}{2} - \tau L_{\beta} \geq\frac{1}{4}$. Multiplying with $4$ yields
\begin{align}\label{eq:estimation-1}
    \begin{aligned}
        \erww{\Vert v^n \Vert_{L^2}^{2}} & + \sum_{k=1}^{n} \erww{\Vert v^k - v^{k-1} \Vert_{L^2}^{2}}   +4\,  \tau \sum_{k=1}^{n} \erww{\Vert \nabla v^k \Vert_{L^2}^{2}}\\ 
        & \le  2  \, \erww{\Vert v^0 \Vert_{L^2}^{2}} + 4 C_g T + 4 \, \tau \,(L_{g}^2  + L_{\beta} ) \sum_{k=1}^{n} \erww{\Vert v^{k-1} \Vert_{L^2}^{2}} .
    \end{aligned}  
\end{align}
Then, discarding the non-negative terms and applying Lemma \ref{Gronwall} yields the existence of a constant $C=C(T,g, \beta)>0$ such that
\begin{align*}
    \sup\limits_{n=1,...,N} \erww{\Vert v^n \Vert_{L^2}^{2}} \leq C \erww{\Vert v^0 \Vert_{L^2}^{2}}.
\end{align*}
Now, using \eqref{eq:estimation-1}  with $k=N$ yields the assertion.
\end{proof}
\begin{rem}
Let us remark that if $v^0 \in L^2(\Omega; H^1(\Lambda))$, then since
\begin{equation*}
    \Delta v^n = \tau^{-1} (v^n - v^{n-1} + \operatorname{div}(\mathbf{v}f(v^n)) - g(v^{n-1}) \Delta_nW + \beta(v^n))
\end{equation*}
for each $N \in \N$ and $n \in \{1,...,N\}$, we automatically obtain that $\Delta v^n$ is $(\mathcal{F}_{t_n})$-measurable with values in $H^1(\Lambda)$. Moreover, by Theorem 3.2.1.3 in \cite{Grisvard85} and the fact that $\partial_i v^n$ satisfies the homogeneous Neumann boundary condition in the weak sense for all $i=1,...,d$, we also get that $v^n \in L^2(\Omega; H^3(\Lambda))$ for $n \in \{1,...,N\}$. If $v^0 \in L^2(\Omega; H^2(\Lambda))$, then we obtain automatically $\Delta v^n \in L^2(\Omega; H^2(\Lambda))$.
\end{rem}
\begin{lem}\label{Lemma 250923_02} 
Let $r \in \N_0$ and $v_0 \in L^{2^{r+1}}(\Omega; H^1(\Lambda))$ be $\mathcal{F}_0$-measurable. Then there exist constants $\tilde{K}_2(r), \hat{C}(r)>0$ and $\tilde{N}_r >0$ such that for any $N \geq \tilde{N}_r$ we have 
\begin{align}\label{eq 240312_1}
\begin{aligned}
\sup_{n \in \{1,...,N\}} &\erww{\Vert v^n \Vert_{H^1}^{2^{r+1}}}\\ 
&+ \sum\limits_{n=1}^N \mathbb{E}\left[\prod\limits_{l=1}^r \Big[\Vert v^n \Vert_{H^1}^{2^l} + \Vert v^{n-1} \Vert_{H^1}^{2^l} \Big] \times \Big(\Vert v^n - v^{n-1} \Vert_{H^1}^2 + \tau \Vert \Delta v^n \Vert_2^2 \Big)\right] \\
&\leq \hat{C}(r) \erww{\Vert v^0 \Vert_{H^1}^{2^{r+1}}} \leq \tilde{K}_2(r).
\end{aligned}
\end{align}

\end{lem}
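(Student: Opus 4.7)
The plan is to proceed by induction on $r$.

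\textbf{Base case $r = 0$.} I would test \eqref{ES} against $-\Delta v^n$ (which is legitimate since the Remark guarantees $\Delta v^n \in L^2(\Omega;H^1(\Lambda))$). Using the weak homogeneous Neumann boundary condition and integration by parts, the discrete time-derivative term becomes $(\nabla(v^n-v^{n-1}), \nabla v^n)$ to which the polarization identity \eqref{identity} applies, while the Laplacian term contributes exactly $\tau \Vert \Delta v^n \Vert_2^2$. The $\beta$-term becomes $\tau (\beta'(v^n)\nabla v^n, \nabla v^n)$ and is controlled by $\tau L_\beta \Vert \nabla v^n\Vert_2^2$. For the convective term, using $\operatorname{div}(\mathbf{v})=0$ and a single integration by parts reduces it to $\tau \int \mathbf{v}^n f'(v^n) \nabla v^n\cdot(-\Delta v^n)\,dx$, which Young's inequality splits into $\varepsilon\tau\Vert \Delta v^n\Vert_2^2 + C_\varepsilon\tau\Vert \nabla v^n\Vert_2^2$. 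The delicate piece is the stochastic term: integrating by parts gives $\Delta_n W\int g'(v^{n-1})\nabla v^{n-1}\cdot \nabla v^n\,dx$, and I split $\nabla v^n = \nabla v^{n-1}+\nabla(v^n-v^{n-1})$. After taking expectation, the $\mathcal{F}_{t_{n-1}}$-measurable piece $\Delta_n W\int g'(v^{n-1})|\nabla v^{n-1}|^2\,dx$ vanishes, and the remainder is bounded by $\varepsilon\Vert \nabla(v^n-v^{n-1})\Vert_2^2 + C_\varepsilon(\Delta_n W)^2\Vert g'\Vert_\infty^2\Vert \nabla v^{n-1}\Vert_2^2$, whose expectation yields a $C\tau\,\erww{\Vert \nabla v^{n-1}\Vert_2^2}$ contribution. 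Summing over $n$, absorbing the $\varepsilon$-terms, applying Lemma \ref{Gronwall} and combining with Lemma \ref{Lemma 250923_01} establishes \eqref{eq 240312_1} for $r=0$.

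\textbf{Inductive step $r \to r+1$.} Starting from the per-step inequality
\[ \Vert v^n\Vert_{H^1}^2 - \Vert v^{n-1}\Vert_{H^1}^2 + \tfrac{1}{2}\Vert v^n - v^{n-1}\Vert_{H^1}^2 + \tau\Vert \Delta v^n\Vert_2^2 \leq \mathcal{R}_n \]
(where $\mathcal{R}_n$ collects the stochastic and zero-order contributions), I multiply both sides by the factor $\Vert v^n\Vert_{H^1}^{2^{r+1}} + \Vert v^{n-1}\Vert_{H^1}^{2^{r+1}}$. The key algebraic identity $(a-b)(a+b)=a^2-b^2$ applied to $a=\Vert v^n\Vert_{H^1}^{2^{r+1}}$ and $b=\Vert v^{n-1}\Vert_{H^1}^{2^{r+1}}$ upgrades the telescoping left-hand side to $\Vert v^n\Vert_{H^1}^{2^{r+2}} - \Vert v^{n-1}\Vert_{H^1}^{2^{r+2}}$, while the coercive pieces acquire precisely the multiplier $\prod_{l=1}^{r+1}[\Vert v^n\Vert_{H^1}^{2^l}+\Vert v^{n-1}\Vert_{H^1}^{2^l}]$ matching the claim.

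\textbf{Main obstacle.} The hard part is controlling $\mathcal{R}_n$ multiplied by this non-$\mathcal{F}_{t_{n-1}}$-measurable factor: the simple trick of pulling $\E[\Delta_n W\,|\,\mathcal{F}_{t_{n-1}}]=0$ no longer kills the leading stochastic piece. To handle this, I write $\Vert v^n\Vert_{H^1}^{2^{l}} \leq C_l\bigl(\Vert v^{n-1}\Vert_{H^1}^{2^{l}} + \Vert v^n-v^{n-1}\Vert_{H^1}^{2^{l}}\bigr)$ and expand the product, so that the $\mathcal{F}_{t_{n-1}}$-measurable pieces kill the first-order $\Delta_n W$ contribution by conditioning, while the increment-pieces $\Vert v^n-v^{n-1}\Vert_{H^1}^{2^l}$ are reabsorbed into the coercive term via Young's inequality with appropriately tuned exponents; the remaining quadratic $(\Delta_n W)^2$-terms yield a factor of $\tau$ after conditioning and feed into the Gronwall argument. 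The lower-order moments that appear on the right-hand side (from the $(\Delta_n W)^2\Vert \nabla v^{n-1}\Vert_2^2$-type bounds times products of lower powers of norms) are controlled by the inductive hypothesis at level $r$. Summing in $n$, taking expectation, and applying Lemma \ref{Gronwall} to $\erww{\Vert v^n\Vert_{H^1}^{2^{r+2}}}$ closes the induction, provided $N\geq \tilde{N}_{r+1}$ is large enough to absorb the $\tau L_\beta$-type terms as in the proof of Lemma \ref{Lemma 250923_01}; finiteness of the right-hand side follows from the assumption $v^0 \in L^{2^{r+2}}(\Omega;H^1(\Lambda))$.
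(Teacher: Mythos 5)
Your proposal is structurally the paper's own proof: induction on $r$; base case by testing \eqref{ES} with (essentially) $v^n-\Delta v^n$; inductive step by multiplying the pathwise per-step inequality by $\Vert v^n\Vert_{H^1}^{2^{r+1}}+c\,\Vert v^{n-1}\Vert_{H^1}^{2^{r+1}}$; the martingale term handled by splitting off its $\mathcal{F}_{t_{n-1}}$-measurable part; then summation and Lemma \ref{Gronwall}. However, the one step you yourself flag as the main obstacle is, as written, not correct. You propose to control (martingale term)$\times$(non-adapted multiplier) by bounding $\Vert v^n\Vert_{H^1}^{2^l}\leq C_l\bigl(\Vert v^{n-1}\Vert_{H^1}^{2^l}+\Vert v^n-v^{n-1}\Vert_{H^1}^{2^l}\bigr)$ and ``expanding the product''; but the martingale factor (the paper's $f_{n-1}^r\Delta_n W$) changes sign, so an upper bound on its multiplier gives no upper bound on the product. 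You need the exact decomposition $\Vert v^n\Vert_{H^1}^{2^{r+1}}=\Vert v^{n-1}\Vert_{H^1}^{2^{r+1}}+\bigl(\Vert v^n\Vert_{H^1}^{2^{r+1}}-\Vert v^{n-1}\Vert_{H^1}^{2^{r+1}}\bigr)$: the adapted part either vanishes under conditioning or, as in the paper, is carried forward as the new martingale term $f_{n-1}^{r+1}=2f_{n-1}^r\Vert v^{n-1}\Vert_{H^1}^{2^{r+1}}$, while Young's inequality on the difference part leaves a term of the form $\tfrac18\bigl(\Vert v^n\Vert_{H^1}^{2^{r+1}}-\Vert v^{n-1}\Vert_{H^1}^{2^{r+1}}\bigr)^2$ that must be absorbed. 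This is precisely why the paper uses the weighted multiplier with $c=\tfrac12$: the identity $(a-b)(a+\tfrac12 b)=\tfrac34(a^2-b^2)+\tfrac14(a-b)^2$ produces a spare nonnegative $\tfrac14(a-b)^2$ on the left-hand side for this absorption. With your unweighted multiplier $a+b$ the telescoping is exact but that spare term is absent, so you would instead have to dominate $(a-b)^2$ by $\Vert v^n-v^{n-1}\Vert_{H^1}^2$ times the level-$(r+1)$ coercive product (mean value theorem together with $(x+y)^{2^{r+2}-2}\leq C(r)\prod_{l=1}^{r+1}(x^{2^l}+y^{2^l})$) and absorb it there with a small constant. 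Either repair closes the argument. A second, minor inconsistency: your base case ends by taking expectation, but the inductive step multiplies the per-step inequality by a random factor, so the whole induction must be kept pathwise, with expectation deferred to the very end as in the paper.
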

\begin{proof}
The proof is similar to the proof of Lemma 4.1 in \cite{MajPro18}, however, for the sake of completeness, we want to give the whole proof by induction.\\
We firstly claim the following: For any $r \in \N_0$ and any $n=1,\ldots,N$, there exists $K(r)>0$ and a $(\mathcal{F}_{t_{n-1}})$-measurable real-valued random variable $f_{n-1}^r$ with $|f_{n-1}^r|^2 \leq \tilde{C}(r)(1+ \Vert v^{n-1} \Vert_{H^1}^{2^{r+2}})$ for some constant $\tilde{C}(r)$ only depending on $r$ such that
\begin{align}\label{eq 020224_01}
 & \Big[\Vert v^n \Vert_{H^1}^{2^{r+1}} - \Vert v^{n-1} \Vert_{H^1}^{2^{r+1}}\Big] +  \bigg[\prod\limits_{l=1}^r \Big[\Vert v^n \Vert_{H^1}^{2^l} + \frac{1}{2} \Vert v^{n-1} \Vert_{H^1}^{2^l} \Big] \times \Big( \frac{1}{2} \Vert v^n - v^{n-1} \Vert_{H^1}^2 + \tau \Vert \Delta v^n \Vert_2^2 \Big)\bigg] \notag\\
&\leq C(r) \sum\limits_{l=1}^{r+1}|\Delta_n W|^{2^l} (1+ \Vert v^{n-1} \Vert_{H^1}^{2^{r+1}}) +f_{n-1}^r \Delta_n W \notag\\
&\hspace{3cm} + \left(\frac{8}{3}\right)^r \tau \, C_{f', \mathbf{v}, \beta'} \left (\Vert v^n \Vert_{H^1}^{2^{r+1}} + \frac{1}{2} \Vert v^{n-1} \Vert_{H^1}^{2^{r+1}} \right)
\end{align}
$\mathbb{P}$-a.s. in $\Omega$. We prove \eqref{eq 020224_01} for any $r \in \N_0$ and any $n=1,\ldots,N$ by induction over $r$. First, for $r=0$, we prove
\begin{align*}
 & \Vert v^n \Vert_{H^1}^2 - \Vert v^{n-1} \Vert_{H^1}^2 +  \frac{1}{2} \Vert v^n - v^{n-1} \Vert_{H^1}^2 + \tau \Vert \Delta v^n \Vert_2^2 \\
&\leq C(0) |\Delta_n W|^2 (1+ \Vert v^{n-1} \Vert_{H^1}^2) +f_{n-1}^0 \Delta_n W + \tau \, C_{f', \mathbf{v}, \beta'} \Vert v^n \Vert_{H^1}
\end{align*}
$\mathbb{P}$-a.s. in $\Omega$, where we use the convention $\prod\limits_{l=1}^0=1$. Applying $v^n - \Delta v^n$ to \eqref{ES} yields
\begin{align*}
   \frac{1}{2} &\Big[\Vert v^n \Vert_{H^1}^2 - \Vert v^{n-1} \Vert_{H^1}^2+ \Vert v^n - v^{n-1} \Vert_{H^1}^2 \Big] + \tau \Big( \Vert \nabla v^n \Vert_2^2 + \Vert \Delta v^n \Vert_2^2 \Big) \\
&+ \tau \int_{\Lambda} \operatorname{div} (\mathbf{v}^n f(v^n)) (v^n - \Delta(v^n)) \, dx\\ 
&=(g(v^{n-1}) \Delta_n W,  v^n)_2 + (\nabla g(v^{n-1}) \Delta_n W, \nabla v^n)_2 + \tau (\beta(v^n), v^n)_2 + \tau (\nabla \beta(v^n) , \nabla v^n)_2 \\
&=(g(v^{n-1}) \Delta_n W,  v^n- v^{n-1})_2 + (\nabla g(v^{n-1}) \Delta_n W, \nabla v^n - \nabla v^{n-1})_2 \\
&+(g(v^{n-1}) \Delta_n W,  v^{n-1})_2 + (\nabla g(v^{n-1}) \Delta_n W, \nabla v^{n-1})_2 \\
&+ \tau (\beta(v^n), v^n)_2 + \tau (\nabla \beta(v^n) , \nabla v^n)_2 \\
&\leq \Vert g(v^{n-1}) \Vert_2^2 |\Delta_n W|^2 + \frac{1}{4} \Vert v^n - v^{n-1} \Vert_2^2 + \Vert \nabla g(v^{n-1}) \Vert_2^2 |\Delta_n W|^2 + \frac{1}{4} \Vert \nabla v^n - \nabla v^{n-1} \Vert_2^2 \\
\end{align*}
\begin{align*}
&+ \bigg( \int_{\Lambda} g(v^{n-1}) v^{n-1} + g'(v^{n-1}) |\nabla v^{n-1}|^2 \, dx \bigg) \Delta_n W  + \tau \| \beta' \|_{\infty} \| v^n \|^2_{H^1}\\
&\leq \frac{1}{2}C(0) |\Delta_n W|^2 (1+ \Vert v^{n-1} \Vert_{H^1}^2) + \frac{1}{4} \Vert v^n - v^{n-1} \Vert_{H^1}^2 + \frac{1}{2} f_{n-1}^0 \Delta_n W + \tau \| \beta' \|_{\infty} \| v^n \|^2_{H^1}
\end{align*}
for some constant $C(0)>0$ only depending on $C_g,\Vert g' \Vert_{\infty}$, $\Lambda$ and
\begin{equation*}
    f_{n-1}^0 := 2\left( \int_{\Lambda} g(v^{n-1}) v^{n-1} + g'(v^{n-1}) |\nabla v^{n-1}|^2 \, dx \right).
\end{equation*}
Moreover, thanks to \eqref{180925_01} and \ref{A5}, we have
\begin{align*}
    &\int_{\Lambda} \operatorname{div} (\mathbf{v}^n f(v^n)) (v^n - \Delta(v^n)) \, dx = - \int_{\Lambda} \operatorname{div} (\mathbf{v}^n f(v^n))  \Delta(v^n) \, dx \\
    &= \int_{\partial \Lambda} \mathbf{v}^n f(v^n))  \Delta v^n \cdot \vec{n}\, dS+ \int_{\Lambda} \mathbf{v}^n f(v^n)  \nabla \Delta(v^n) \, dx \\
    &= \int_{\Lambda} \mathbf{v}^n f(v^n)  \nabla \Delta(v^n) \, dx = \int_{\Lambda} \operatorname{div}(\mathbf{v}^n f(v^n))  \Delta v^n \, dx \\
    &= \int_{\Lambda} \mathbf{v}^n \cdot\nabla f(v^n))  \Delta v^n \, dx 
\end{align*}
and therefore
\begin{align*}
    &\left| \tau \int_{\Lambda} \operatorname{div} (\mathbf{v}^n f(v^n)) (v^n - \Delta(v^n)) \, dx \right| \leq \frac{\tau}{2} \left( \| \mathbf{v}^n \cdot \nabla f(v^n) \|_{L^2}^2 + \| \Delta v^n \|_{L^2}^2 \right) \\
    &\leq \frac{\tau}{2} \|\mathbf{v} \|_{\infty}^2 \| f'(v^n) \nabla v^n \|_{L^2}^2 + \frac{\tau}{2} \| \Delta v^n \|_{L^2}^2 \\
    &\leq \frac{\tau}{2} \|\mathbf{v} \|_{\infty}^2 \| f' \|_{\infty}^2 \| \nabla v^n \|_{L^2}^2 + \frac{\tau}{2} \| \Delta v^n \|_{L^2}^2.
\end{align*}
Subtracting $\frac{1}{4} \Vert v^n - v^{n-1} \Vert_{H^1}^2$ and $\frac{\tau}{2} \| \Delta v^n \|_{L^2}^2$, multiplying by $2$ and discarding nonnegative terms on the left-hand side yields
\begin{align*}
    & \Vert v^n \Vert_{H^1}^2 - \Vert v^{n-1} \Vert_{H^1}^2 +  \frac{1}{2} \Vert v^n - v^{n-1} \Vert_{H^1}^2 + \tau \Vert \Delta v^n \Vert_2^2 \\
&\leq C(0) |\Delta_n W|^2 (1+ \Vert v^{n-1} \Vert_{H^1}^2) +f_{n-1}^0 \Delta_n W + \tau \left( \|\mathbf{v} \|_{\infty}^2 \| f' \|_{\infty}^2 \| + 2 \| \beta' \|_{\infty} \right) \Vert v^n \Vert^2_{H^1}
\end{align*}
and therefore \eqref{eq 020224_01} holds for $r=0$.\\
Now, we assume \eqref{eq 020224_01} holds true for some $r \in \N_0$ and we show that \eqref{eq 020224_01} holds true for $r+1$. Multiplying \eqref{eq 020224_01} with $(\Vert v^n \Vert_{H^1}^{2^{r+1}} + \frac{1}{2} \Vert v^{n-1} \Vert_{H^1}^{2^{r+1}} )$ yields
\begin{align*}
  &\frac{3}{4} \bigg(\Vert  v^n \Vert_{H^1}^{2^{r+2}} - \Vert v^{n-1} \Vert_{H^1}^{2^{r+2}}\bigg) + \frac{1}{4} \bigg(\Vert  v^n \Vert_{H^1}^{2^{r+1}} - \Vert  v^{n-1} \Vert_{H^1}^{2^{r+1}}\bigg)^2 \\  
  & \quad+  \bigg[\prod\limits_{l=1}^{r+1} \Big[\Vert v^n \Vert_{H^1}^{2^l} + \frac{1}{2} \Vert v^{n-1} \Vert_{H^1}^{2^l} \Big] \times \Big( \frac{1}{2} \Vert v^n - v^{n-1} \Vert_{H^1}^2 + \tau \Vert \Delta v^n \Vert_2^2 \Big)\bigg] \notag 
\end{align*}
\begin{align*}
&\leq \bigg(C(r) \sum\limits_{l=1}^{r+1}|\Delta_n W|^{2^l} (1+ \Vert v^{n-1} \Vert_{H^1}^{2^{r+1}})\bigg) (\Vert v^n \Vert_{H^1}^{2^{r+1}} + \frac{1}{2} \Vert v^{n-1} \Vert_{H^1}^{2^{r+1}} ) \\
&\quad +\bigg( f_{n-1}^r \Delta_n W\bigg) (\Vert v^n \Vert_{H^1}^{2^{r+1}} + \frac{1}{2} \Vert v^{n-1} \Vert_{H^1}^{2^{r+1}} ) + \left( \frac{8}{3}\right)^r \tau \, C_{f', \mathbf{v}, \beta'} (\Vert v^n \Vert_{H^1}^{2^{r+1}} + \frac{1}{2} \Vert v^{n-1} \Vert_{H^1}^{2^{r+1}} )^2\\
&\leq 2C(r)^2 \bigg(\sum\limits_{l=1}^{r+1}|\Delta_n W|^{2^l}\bigg)^2 (1+ \Vert v^{n-1} \Vert_{H^1}^{2^{r+1}})^2 + \frac{1}{8} \bigg(\Vert  v^n \Vert_{H^1}^{2^{r+1}} - \Vert  v^{n-1} \Vert_{H^1}^{2^{r+1}}\bigg)^2 \\
& \quad+ \frac{3}{2} C(r) \bigg(\sum\limits_{l=1}^{r+1}|\Delta_n W|^{2^l}\bigg)(1+ \Vert v^{n-1} \Vert_{H^1}^{2^{r+1}})\Vert v^{n-1} \Vert_{H^1}^{2^{r+1}} + 2|f_{n-1}^r|^2 |\Delta_n W|^2  \\
&\quad + \frac{1}{8} \bigg(\Vert  v^n \Vert_{H^1}^{2^{r+1}} - \Vert  v^{n-1} \Vert_{H^1}^{2^{r+1}}\bigg)^2 + \frac{3}{2} f_{n-1}^r \Delta_n W \Vert v^{n-1} \Vert_{H^1}^{2^{r+1}} \\
&\quad + 2 \cdot \left( \frac{8}{3}\right)^r \tau \, C_{f', \mathbf{v}, \beta'} (\Vert v^n \Vert_{H^1}^{2^{r+2}} + \frac{1}{2} \Vert v^{n-1} \Vert_{H^1}^{2^{r+2}} )\\
&\leq 4C(r)^2(r+1)^2 \bigg(\sum\limits_{l=1}^{r+1}|\Delta_n W|^{2^{l+1}}\bigg)(1+ \Vert v^{n-1} \Vert_{H^1}^{2^{r+2}}) +  \frac{1}{4} \bigg(\Vert  v^n \Vert_{H^1}^{2^{r+1}} - \Vert  v^{n-1} \Vert_{H^1}^{2^{r+1}}\bigg)^2 \\
&\quad + 3C(r) \bigg(\sum\limits_{l=1}^{r+1}|\Delta_n W|^{2^l}\bigg)(1+ \Vert v^{n-1} \Vert_{H^1}^{2^{r+2}}) + 2\tilde{C}(r)(1+ \Vert v^{n-1} \Vert_{H^1}^{2^{r+2}})|\Delta_n W|^2  \\
&\quad + \frac{3}{2} f_{n-1}^r \Vert  v^{n-1} \Vert_{H^1}^{2^{r+1}} \Delta_n W + 2 \cdot \left( \frac{8}{3}\right)^r \tau \, C_{f', \mathbf{v}, \beta'} (\Vert v^n \Vert_{H^1}^{2^{r+2}} + \frac{1}{2} \Vert v^{n-1} \Vert_{H^1}^{2^{r+2}} )\\
&\leq \big(4(r+1)^2C(r)^2 + 3C(r) + 2\tilde{C}(r)\big)\bigg(\sum\limits_{l=1}^{r+2}|\Delta_n W|^{2^l}\bigg)(1+ \Vert v^{n-1} \Vert_{H^1}^{2^{r+2}}) \\
&\quad + \frac{3}{4} f_{n-1}^{r+1} \Delta_n W + \frac{1}{4} \bigg(\Vert  v^n \Vert_{H^1}^{2^{r+1}} - \Vert  v^{n-1} \Vert_{H^1}^{2^{r+1}}\bigg)^2 \\
&\quad + 2 \cdot \left( \frac{8}{3}\right)^r \tau \, C_{f', \mathbf{v}, \beta'} (\Vert v^n \Vert_{H^1}^{2^{r+2}} + \frac{1}{2} \Vert v^{n-1} \Vert_{H^1}^{2^{r+2}} )
\end{align*}
$\mathbb{P}$-a.s. in $\Omega$, where $f_{n-1}^{r+1}:= 2f_{n-1}^r \Vert v^{n-1} \Vert_{H^1}^{2^{r+1}}$. 
Hence, subtracting $\frac{1}{4} \bigg(\Vert  v^n \Vert_{H^1}^{2^{r+1}} - \Vert  v^{n-1} \Vert_{H^1}^{2^{r+1}}\bigg)^2$ yields
\begin{align*}
&\frac{3}{4} \bigg(\Vert  v^n \Vert_{H^1}^{2^{r+2}} - \Vert v^{n-1} \Vert_{H^1}^{2^{r+2}}\bigg) +  \bigg[\prod\limits_{l=1}^{r+1} \Big[\Vert v^n \Vert_{H^1}^{2^l} + \frac{1}{2} \Vert v^{n-1} \Vert_{H^1}^{2^l} \Big] \times \Big( \frac{1}{2} \Vert v^n - v^{n-1} \Vert_{H^1}^2 + \tau \Vert \Delta v^n \Vert_2^2 \Big)\bigg] \notag \\
&\leq \frac{3}{4} C(r+1) \bigg(\sum\limits_{l=1}^{r+2}|\Delta_n W|^{2^l}\bigg)(1+ \Vert v^{n-1} \Vert_{H^1}^{2^{r+2}}) + \frac{3}{4} f_{n-1}^{r+1} \Delta_n W \\
&\quad + 2 \cdot \left( \frac{8}{3}\right)^r \tau \, C_{f', \mathbf{v}, \beta'} (\Vert v^n \Vert_{H^1}^{2^{r+2}} + \frac{1}{2} \Vert v^{n-1} \Vert_{H^1}^{2^{r+2}} )
\end{align*}
$\mathbb{P}$-a.s. in $\Omega$, where $C(r+1):= \frac{4}{3} \big(4(r+1)C(r)^2 + 3C(r) + 2\tilde{C}(r)\big)$. Multiplying with $\frac{4}{3}$ and discarding nonnegative terms on the left-hand side yields \eqref{eq 020224_01} for $r+1$.\\
Now, we have $\erww{\sum\limits_{l=1}^{r+1}|\Delta_n W|^{2^l}}= \sum\limits_{l=1}^{r+1} \tau^{2^{l-1}} (2^l-1)!! \leq \tilde{C}(T, r)\, \tau$ for some constant $\tilde{C}(T, r)>0$, where
\begin{align*}
k!!=
\begin{cases}
k \cdot (k-2) \cdot ... \cdot 2, &k \in \N ~\text{even}, \\
k \cdot (k-2) \cdot ... \cdot 1, &k \in \N ~\text{odd}
\end{cases}
\end{align*}
and $\erww{f_{n-1}^r \Delta_n W} = \erww{f_{n-1}^r}\erww{\Delta_n W}=0$. Therefore, applying the expectation in \eqref{eq 020224_01} and setting $\overline{C}(T,r):= C(r) \tilde{C}(T,r)$ yields
\begin{align*}
&\erww{\Vert v^n \Vert_{H^1}^{2^{r+1}}} - \erww{\Vert v^{n-1} \Vert_{H^1}^{2^{r+1}}} \\
& \quad+ \erww{\prod\limits_{l=1}^r \Big[\Vert v^n \Vert_{H^1}^{2^l} + \frac{1}{2} \Vert v^{n-1} \Vert_{H^1}^{2^l} \Big] \times \Big( \frac{1}{2} \Vert v^n - v^{n-1} \Vert_{H^1}^2 + \tau \Vert \Delta v^n \Vert_2^2 \Big)} \\
&\leq \overline{C}(T,r) \tau + \overline{C}(T,r) \tau \E \Vert v^{n-1} \Vert_{H^1}^{2^{r+1}} + \left( \frac{8}{3}\right)^r \tau \, C_{f', \mathbf{v}, \beta'} \erww{\Vert v^n \Vert_{H^1}^{2^{r+2}} + \frac{1}{2} \Vert v^{n-1} \Vert_{H^1}^{2^{r+2}} }
\end{align*}
for all $r \in \N_0$. Exchanging $n$ and $k$ and summing over $k=1,\ldots, n$ for arbitrary $n\in \{1,\ldots,N\}$ yields
\begin{align}\label{eq 241125_01}
        &\erww{\Vert v^n \Vert_{H^1}^{2^{r+1}}}+ \sum\limits_{k=1}^n \erww{\prod\limits_{l=1}^r \Big[\Vert v^k \Vert_{H^1}^{2^l} + \frac{1}{2} \Vert v^{k-1} \Vert_{H^1}^{2^l} \Big] \times \Big( \frac{1}{2} \Vert v^k - v^{k-1} \Vert_{H^1}^2 + \tau \Vert \Delta v^k \Vert_2^2 \Big)} \notag \\
        &\leq \erww{ \|v^0 \|_{H^1}^{2^{r+1}}} + \tau \, \overline{C}_1(T,r) + \tau \, \overline{C}_2(T,r, f', \mathbf{v}, \beta') \sum\limits_{k=1}^n \erww{\Vert v^{k-1} \Vert_{H^1}^{2^{r+1}}} \notag \\
        & \quad + \left( \frac{8}{3}\right)^r \tau \, C_{f', \mathbf{v}, \beta'} \erww{\Vert v^n \Vert_{H^1}^{2^{r+1}}}. 
\end{align}
Now, choosing $\tilde{N}_r$ such that $\left(\frac{8}{3}\right)^r \tau \, C_{f', \mathbf{v}, \beta'} < \frac{1}{2}$ for all $N \geq \tilde{N}_r$, applying Lemma \ref{Gronwall} yields the existence of a constant $C=C(T,r,f',\mathbf{v},\beta')>0$ such that for all $N \geq \tilde{N}_r$
\begin{align*}
    \sup\limits_{n=1,...,N} \erww{\Vert v^n \Vert_{H^1}^{2^{r+1}}} \leq C \erww{\Vert v^0 \Vert_{H^1}^{2^{r+1}}}.
\end{align*}
Using again \eqref{eq 241125_01} yields the result.
\end{proof}
\begin{cor}\label{Corollary 241125_01}
Let $v^0 \in L^{16}(\Omega; H^1(\Lambda))$. Then there exist constants $C_2, k_2, K_2>0$ and $N_2 \in \N$ such that for any $N \geq N_2$
\begin{align*}
\sup_{n \in \{1,...,N\}} \erww{\Vert v^n \Vert_2^4 } &+ \sup_{n \in \{1,...,N\}} \erww{ \Vert \nabla v^n \Vert_2^{10}} + \tau \sum\limits_{n=1}^N \erww{ \Vert \Delta v^n \Vert_2^2 \Vert \nabla v^n \Vert_2^2} \\
& \leq k_2+ K_2 \erww{\Vert v^0 \Vert_{H^1}^{16}} \leq C_2.
\end{align*}
\end{cor}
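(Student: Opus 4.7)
The corollary is essentially a specialization of Lemma \ref{Lemma 250923_02}, so my plan is to invoke that lemma twice with two different choices of the exponent parameter $r$ and to combine the resulting estimates via elementary interpolation.

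First, I would take $r = 3$. Since $2^{r+1} = 16$, the hypothesis $v^0 \in L^{16}(\Omega; H^1(\Lambda))$ matches exactly the integrability requirement of Lemma \ref{Lemma 250923_02}, which then produces constants $\hat{C}(3), \tilde{K}_2(3) > 0$ and $\tilde{N}_3 \in \N$ such that, for every $N \geq \tilde{N}_3$,
\[
\sup_{n \in \{1,\ldots,N\}} \erww{\Vert v^n \Vert_{H^1}^{16}} \leq \hat{C}(3)\, \erww{\Vert v^0 \Vert_{H^1}^{16}} \leq \tilde{K}_2(3).
\]
Combining the pointwise inequalities $\Vert v^n \Vert_2 \leq \Vert v^n \Vert_{H^1}$ and $\Vert \nabla v^n \Vert_2 \leq \Vert v^n \Vert_{H^1}$ with Jensen's inequality on the probability space yields
\[
\erww{\Vert v^n \Vert_2^{4}} \leq \bigl(\erww{\Vert v^n \Vert_{H^1}^{16}}\bigr)^{1/4}, \qquad \erww{\Vert \nabla v^n \Vert_2^{10}} \leq \bigl(\erww{\Vert v^n \Vert_{H^1}^{16}}\bigr)^{5/8},
\]
which controls the first two terms on the left-hand side of the statement.

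Second, I would reapply Lemma \ref{Lemma 250923_02} with $r = 1$; the necessary $L^4(\Omega; H^1(\Lambda))$-integrability of $v^0$ follows a fortiori from $v^0 \in L^{16}(\Omega; H^1(\Lambda))$. For $N \geq \tilde{N}_1$, the lemma provides
\[
\sum_{n=1}^N \erww{\bigl[\Vert v^n \Vert_{H^1}^2 + \Vert v^{n-1} \Vert_{H^1}^2\bigr] \bigl(\Vert v^n - v^{n-1} \Vert_{H^1}^2 + \tau \Vert \Delta v^n \Vert_2^2\bigr)} \leq \hat{C}(1)\erww{\Vert v^0 \Vert_{H^1}^{4}}.
\]
Discarding the nonnegative $\Vert v^n - v^{n-1} \Vert_{H^1}^2$ contribution and using $\Vert \nabla v^n \Vert_2^2 \leq \Vert v^n \Vert_{H^1}^2$ immediately delivers the desired estimate on $\tau \sum_{n=1}^N \erww{\Vert \Delta v^n \Vert_2^2 \Vert \nabla v^n \Vert_2^2}$.

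Finally, setting $N_2 := \max\{\tilde{N}_1, \tilde{N}_3\}$ and adding the two estimates produces the assertion, with $k_2$ absorbing the additive constants that appear en route in Lemma \ref{Lemma 250923_02} and $K_2$ collecting the factors multiplying $\erww{\Vert v^0 \Vert_{H^1}^{16}}$. I do not anticipate any real difficulty: the analytic work has already been done in Lemma \ref{Lemma 250923_02}. The only subtlety is that the non-dyadic exponent $10$ forces the choice $2^{r+1}=16$ (i.e., $r=3$) rather than $r=2$ (which would only bound $\erww{\Vert \nabla v^n \Vert_2^8}$), and hence explains why the sixteenth moment of $\Vert v^0 \Vert_{H^1}$ is assumed in \ref{A2}.
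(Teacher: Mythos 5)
Your proposal is correct and follows essentially the same route as the paper: both invoke Lemma \ref{Lemma 250923_02} at $r=1$ (for the term $\tau\sum_n\erww{\Vert\Delta v^n\Vert_2^2\Vert\nabla v^n\Vert_2^2}$) and at $r=3$ (for the tenth moment of the gradient), and then combine. The only cosmetic difference is that you pass from the $16$th to lower moments via Jensen's inequality, whereas the paper uses $x^q\leq 1+x^p$ for $q<p$; note that to land exactly on the stated form $k_2+K_2\erww{\Vert v^0\Vert_{H^1}^{16}}$ your Jensen bounds still need the trivial step $x^{\theta}\leq 1+x$ for $\theta\in(0,1)$.
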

\begin{proof}
For $r=1$ in Lemma \ref{Lemma 250923_02}, we obtain $\tilde{N}_1 \in \N$, $\hat{C}(1)>0$ and $K(1)>0$ s.t.
\begin{align*}
    \sup_{n \in \{1,...,N\}} \erww{\Vert v^n \Vert_2^4 } & + \tau \sum\limits_{n=1}^N \erww{ \Vert \Delta v^n \Vert_2^2 \Vert \nabla v^n \Vert_2^2} \\
     &\leq \hat{C}(1) \erww{\Vert v^0 \Vert_{H^1}^4 }\leq \hat{C}(1) \left(1+ \erww{\Vert v^0 \Vert_{H^1}^{16} }\right)\leq K(1)
\end{align*}
for all $N \geq \tilde{N}_1$. For $r=3$ in Lemma \ref{Lemma 250923_02}, we obtain $\tilde{N}_3 \in \N$, $\hat{C}(3) >0$ and $K(3)>0$ s.t.
\begin{align*}
    \sup_{n \in \{1,...,N\}} \erww{\Vert \nabla v^n \Vert_2^{10}} \leq 1 + \sup_{n \in \{1,...,N\}} \erww{\Vert \nabla v^n \Vert_2^{16}} \leq 1 + \hat{C}(3) \erww{\Vert v^0 \Vert_{H^1}^{16}} \leq K(3)
\end{align*}
for all $N \geq \tilde{N}_3$. Setting $k_2 := 1 + \hat{C}(1)$, $N_2 := \max(\tilde{N}_1, \tilde{N}_3)$, $K_2 := \hat{C}_1 + \hat{C}(3)$ and $C_2 :=K(1) + K(3)$, we have
\begin{align*}
    \sup_{n \in \{1,...,N\}} \erww{\Vert v^n \Vert_2^4 } &+ \sup_{n \in \{1,...,N\}} \erww{ \Vert \nabla v^n \Vert_2^{10}} + \tau \sum\limits_{n=1}^N \erww{ \Vert \Delta v^n \Vert_2^2 \Vert \nabla v^n \Vert_2^2}\\
    & \leq k_2+ K_2 \erww{\Vert v^0 \Vert_{H^1}^{16}} \leq C_2.
\end{align*}
\end{proof}
\begin{lem}\label{Lemma 250923_03}
Let $v^0 \in L^2(\Omega; H^2(\Lambda))\cap L^{16}(\Omega; H^1(\Lambda))$ be $\mathcal{F}_0$-measurable with $\Delta v^0 \in L^2(\Omega; H^1(\Lambda))$. Then there exist constants $K_3, C_3>0$ and $N_3 \in \N$ such that for any $N \geq N_3$ we have
\begin{align*}
\sup_{n \in \{1,...,N\}} \erww{\Vert \Delta v^n \Vert_2^2} &+ \sum\limits_{n=1}^N \erww{\Vert \Delta(v^n - v^{n-1}) \Vert_2^2} + \tau \sum\limits_{n=1}^N \erww{\Vert \nabla \Delta v^n \Vert_2^2} \\
&\leq K_3 \left(1 + \erww{\Vert \Delta v^0 \Vert_2^2 +\tau \Vert \nabla \Delta v^0 \Vert_2^2} \right) \leq C_3.
\end{align*}
\end{lem}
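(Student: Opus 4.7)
\textbf{Proof plan for Lemma \ref{Lemma 250923_03}.} The strategy is a direct lift of the proof of Lemma \ref{Lemma 250923_02} to one level of regularity higher, exploiting the fact that the remark preceding the lemma gives $v^n \in L^2(\Omega; H^3(\Lambda))$ with sufficient boundary regularity. The idea is to test \eqref{ES} against $\Delta^2 v^n$ (equivalently, apply $\Delta$ to \eqref{ES} and pair the result with $\Delta v^n$ in $L^2$) so that integration by parts, together with the weak Neumann boundary condition on $v^n$ and the Neumann condition inherited by $\Delta v^n$ from the discrete equation, produces the desired norms.

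First I would apply the identity $(a-b)a = \tfrac{1}{2}(a^2 - b^2 + (a-b)^2)$ to the pairing $(\Delta v^n - \Delta v^{n-1}, \Delta v^n)_2$, and integrate the Laplacian contribution by parts once more to extract $\tau \|\nabla \Delta v^n\|_2^2$. This yields the identity
\begin{align*}
\tfrac{1}{2}\left(\|\Delta v^n\|_2^2 - \|\Delta v^{n-1}\|_2^2 + \|\Delta(v^n - v^{n-1})\|_2^2\right) + \tau \|\nabla \Delta v^n\|_2^2 = I_n + J_n + S_n,
\end{align*}
where $I_n$ is the convection contribution $-\tau(\nabla \operatorname{div}(\mathbf{v}^n f(v^n)), \nabla \Delta v^n)_2$, $J_n = \tau(\Delta \beta(v^n), \Delta v^n)_2$, and $S_n = (\Delta g(v^{n-1}), \Delta v^n)_2 \Delta_n W$.

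Next I would bound each contribution via Young's inequality. Using $\operatorname{div} \mathbf{v}^n = 0$ and \ref{A3}, \ref{A5}, the integrand in $I_n$ expands into terms of the form $(\nabla \mathbf{v}^n) f'(v^n) \nabla v^n$, $\mathbf{v}^n f''(v^n) |\nabla v^n|^2$ and $\mathbf{v}^n f'(v^n) \nabla^2 v^n$, so that $\tfrac{\tau}{2}\|\nabla \Delta v^n\|_2^2$ can be absorbed on the LHS, leaving $\tau$-multiplied quantities of the form $\|\nabla v^n\|_2^2$, $\| |\nabla v^n|^2\|_2^2$ and $\|\Delta v^n\|_2^2$ (using $\|\nabla^2 v^n\|_2^2 \lesssim \|\Delta v^n\|_2^2 + \|v^n\|_2^2$ by Lemma \ref{Theorem 110124_01}). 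For $J_n$ I would write $\Delta \beta(v^n) = \beta''(v^n)|\nabla v^n|^2 + \beta'(v^n)\Delta v^n$ and apply Cauchy-Schwarz. For the stochastic term I split $\Delta v^n = \Delta v^{n-1} + \Delta(v^n - v^{n-1})$: the $\Delta v^{n-1}$-summand is $\mathcal{F}_{t_{n-1}}$-measurable, so after taking expectation it produces a vanishing martingale increment, while Young's inequality on the cross term allows $\tfrac14 \|\Delta(v^n - v^{n-1})\|_2^2$ to be absorbed on the LHS and leaves $\|\Delta g(v^{n-1})\|_2^2 |\Delta_n W|^2$ on the RHS, whose expectation contributes $\tau \, \E \|\Delta g(v^{n-1})\|_2^2$.

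After summing over $k = 1, \ldots, n$, the main obstacle is the control of $\E \|\Delta g(v^{k-1})\|_2^2$ and the analogous $\beta''$-term from $J_n$, both of which reduce to $\E \|\nabla v^{k-1}\|_{L^4(\Lambda)}^4$. For $d \in \{2, 3\}$, the Gagliardo-Nirenberg inequality gives $\|\nabla v\|_{L^4}^4 \leq C \|\nabla v\|_2^{a} \|v\|_{H^2}^{b}$ for suitable $a, b \geq 0$ with $a + b = 4$, and \eqref{eq 041023_02} reduces $\|v\|_{H^2}^2$ to $\|v\|_2^2 + \|\Delta v\|_2^2$. The high-moment bounds $\sup_n \E\|\nabla v^n\|_2^{10}$ and $\tau \sum_n \E[\|\Delta v^n\|_2^2 \|\nabla v^n\|_2^2]$ from Corollary \ref{Corollary 241125_01} are tailored to match exactly these exponents: H\"older in $\omega$ separates the $\|\nabla v^{k-1}\|_2^a$ factor (absorbed by $L^{10}$-moments) from the $\|\Delta v^{k-1}\|_2^b$ factor, which yields either a $\tau$-summable quantity (via the $\tau \sum \E[\|\Delta v\|_2^2 \|\nabla v\|_2^2]$ bound) or a term of the form $\tau \E\|\Delta v^{k-1}\|_2^2$ suitable for Gronwall. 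Finally I choose $N_3$ large enough that all residual $\tau \cdot C \, \E\|\Delta v^n\|_2^2$ terms can be absorbed on the LHS, and invoke Lemma \ref{Gronwall} to conclude the claimed estimate.

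The delicate step is precisely this Gagliardo-Nirenberg interpolation together with the absorption of the $|\nabla v|^4$ contributions; this is the reason Lemma \ref{Lemma 250923_02} was stated at level $r = 3$ and hence why the hypothesis $u_0 \in L^{16}(\Omega; H^1(\Lambda))$ appears in \ref{A2}.
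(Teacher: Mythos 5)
Your plan follows the paper's proof almost step for step: the paper applies $\nabla$ to \eqref{ES} and tests with $-\nabla\Delta v^n$ (formally the same as your pairing with $\Delta^2 v^n$), obtains exactly your identity, treats the convection term by the same expansion and absorption of $\tfrac{\tau}{8}\Vert\nabla\Delta v^n\Vert_2^2$, handles the stochastic term by the same splitting $\Delta v^n = \Delta v^{n-1} + \Delta(v^n - v^{n-1})$, and closes with Corollary \ref{Corollary 241125_01} and Lemma \ref{Gronwall}. The only cosmetic difference is that the paper keeps the $\beta$-term as $-\tau(\nabla\beta(v^n),\nabla\Delta v^n)_2$ and bounds it by $2\tau\Vert\beta'\Vert_\infty\Vert\nabla v^n\Vert_2^2 + \tfrac{\tau}{8}\Vert\nabla\Delta v^n\Vert_2^2$, avoiding the extra $\beta''(v^n)|\nabla v^n|^2$ contribution your expansion creates; since that contribution is of the same type as the $g''$- and $f''$-terms, your variant also works.

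The one place your plan as written would not close is the step you yourself flag as delicate, in dimension $d=3$. The interpolation $\Vert\nabla v\Vert_{L^4}^4 \leq C\Vert\nabla v\Vert_2^a\Vert v\Vert_{H^2}^b$ with $a+b=4$ forces $(a,b)=(1,3)$ when $d=3$, producing a term of the form $\E\left[\Vert\nabla v^n\Vert_2\,\Vert\Delta v^n\Vert_2^3\right]$. This cannot be controlled by the available bounds: any H\"older/Young splitting leaves either a factor $\E\Vert\Delta v^n\Vert_2^4$ (not provided by Corollary \ref{Corollary 241125_01}) or a power of $\Vert\Delta v^n\Vert_2$ strictly larger than $2$ multiplying $\tau$, which Gronwall cannot absorb. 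The paper instead interpolates one order higher, $\Vert\nabla v^n\Vert_{L^4}^4 \lesssim \Vert\nabla\Delta v^n\Vert_2^{3/2}\Vert\nabla v^n\Vert_2^{5/2} + \Vert\nabla v^n\Vert_2^4$, so that Young's inequality with exponents $(4/3,4)$ yields $\varepsilon\Vert\nabla\Delta v^n\Vert_2^2$ --- absorbable into the $\tau\Vert\nabla\Delta v^n\Vert_2^2$ on the left --- plus $C\Vert\nabla v^n\Vert_2^{10}$; this is precisely where the exponent $10$ in Corollary \ref{Corollary 241125_01} (hence $r=3$, hence $L^{16}(\Omega;H^1(\Lambda))$ in \ref{A2}) comes from. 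You correctly name the exponent $10$ as the target, but the inequality you write down cannot produce it. Relatedly, the term $\tfrac{\tau}{2}\E\Vert\nabla\Delta v^{n-1}\Vert_2^2$ arising from $\E\Vert\Delta g(v^{n-1})\Vert_2^2$ carries the index $n-1$; after summation its $m=0$ contribution is what forces the hypothesis $\Delta v^0\in L^2(\Omega;H^1(\Lambda))$ and the term $\tau\Vert\nabla\Delta v^0\Vert_2^2$ in the conclusion, a point your plan leaves unexplained.
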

\begin{proof}
Applying $\nabla$ to \eqref{ES} and testing with $-\nabla \Delta v^n$ yields
\begin{align}\label{201125_05}
        &\frac{1}{2} \bigg( \Vert \Delta v^n \Vert_2^2 - \Vert \Delta v^{n-1} \Vert_2^2 + \Vert \Delta v^n - \Delta v^{n-1} \Vert_2^2 \bigg) + \tau \Vert \nabla \Delta v^n \Vert_2^2 - \tau\int_{\Lambda} \nabla \operatorname{div}(\mathbf{v}^nf(v^n)) \nabla \Delta v^n \, dx \notag \\
&= (\Delta g(v^{n-1}) \Delta_n W, \Delta v^n)_2 - \tau (\nabla \beta(v^n), \nabla \Delta v^n)_2.
\end{align}
Then
\begin{align}\label{201125_04}
    |\tau (\nabla \beta(v^n), \nabla \Delta v^n)_2| \leq 2\tau \Vert \beta'\Vert_{\infty} \Vert \nabla v^n \Vert_{L^2}^2 + \frac{\tau}{8} \Vert \nabla \Delta v^n \Vert_2^2.
\end{align}
Moreover, we have
\begin{align*}
    \nabla \operatorname{div}(\mathbf{v}^nf(v^n)) &= \nabla(\mathbf{v}^n \nabla f(v^n)) = \nabla \mathbf{v}^n \nabla f(v^n) + \mathbf{v}^n \nabla^2 f(v^n) \\
    &= \nabla \mathbf{v}^n (f'(v^n) \nabla v^n) + f''(v^n) \nabla v^n (\nabla v^n)^T \cdot \mathbf{v}^n + f'(v^n) \nabla^2 v^n \mathbf{v}^n
\end{align*}
and therefore, by Lemma \ref{Theorem 110124_01} there exists a constant $C>0$ depending on $\mathbf{v}$ and $f$ s.t.
\begin{align}\label{201125_01}
        &\left| \tau\int_{\Lambda} \nabla \operatorname{div}(\mathbf{v}^nf(v^n)) \nabla \Delta v^n \, dx \right| \leq 2\tau \Vert \nabla \operatorname{div}(\mathbf{v}^n f(v^n))\Vert_{L^2}^2 + \frac{\tau}{8} \Vert \nabla \Delta v^n \Vert_{L^2}^2 \notag \\
        &\leq \frac{\tau}{8} \Vert \nabla \Delta v^n \Vert_{L^2}^2 + 6\tau \int_{\Lambda} |\nabla \mathbf{v}^n|^2 |f'(v^n)|^2 |\nabla v^n| + |f''(v^n)|^2 |\nabla v^n|^4 |\mathbf{v}^n|^2 \notag \\
        & \hspace{3cm} + |f'(v^n)|^2 |\nabla^2 v^n|^2 |\mathbf{v}^n|^2 \, dx \notag\\
        &\leq \frac{\tau}{8} \Vert \nabla \Delta v^n \Vert_{L^2}^2 + 6\tau C \left[ \Vert \nabla v^n \Vert_{L^2}^2 + \Vert \nabla v^n \Vert_{L^4}^4 + \Vert \nabla^2 v^n \Vert_{L^2}^2 \right] \notag \\
        &\leq \frac{\tau}{8} \Vert \nabla \Delta v^n \Vert_{L^2}^2 + 6\tau C \left[ \Vert \nabla v^n \Vert_{L^4}^4 + \Vert v^n \Vert_{H^2}^2 \right] \notag\\
        &\leq \frac{\tau}{8} \Vert \nabla \Delta v^n \Vert_{L^2}^2 + 6\tau C \left[ \Vert \nabla v^n \Vert_{L^4}^4 + 12 \Vert v^n \Vert_{L^2}^2 + 12 \Vert \Delta v^n \Vert_{L^2}^2\right].
\end{align}
Thanks to Gagliardo-Nirenberg's inequality and Young's inequality, there exist constants $\tilde{C}_1,\tilde{C}_2,\tilde{C}_3,\tilde{C}_4, \tilde{C}_5>0$ such that
\begin{align}\label{201125_02}
    \begin{aligned}
        \Vert \nabla v^n \Vert_{L^4}^4 &\leq
        \begin{cases} \tilde{C}_1 \Vert \Delta v^n \Vert_2^2 \Vert \nabla v^n \Vert_2^2 + \tilde{C}_2 \Vert \nabla v^n \Vert_2^4 + \tilde{C}_3 \Vert v^n \Vert_2^4, &d=2, \\
         \tilde{C}_1 \Vert \nabla \Delta v^n \Vert_2^{3/2} \Vert \nabla v^n \Vert_2^{5/2} + \tilde{C}_2 \Vert \nabla v^n \Vert_2^4, &d=3.
        \end{cases} \\
        &\leq 
        \begin{cases} \tilde{C}_1 \Vert \Delta v^n \Vert_2^2 \Vert \nabla v^n \Vert_2^2 + \tilde{C}_2 \Vert \nabla v^n \Vert_2^4 + \tilde{C}_3 \Vert v^n \Vert_2^4, &d=2, \\
         \frac{1}{48C_1} \Vert \nabla \Delta v^n \Vert_2^2 + \tilde{C}_4 \Vert \nabla v^n \Vert_2^{10} + \tilde{C}_2 \Vert \nabla v^n \Vert_2^4, &d=3.
        \end{cases} \\
        &\leq \tilde{C}_5 \left[ \Vert \Delta v^n \Vert_2^2 \Vert \nabla v^n \Vert_2^2 + \Vert \nabla v^n \Vert_2^4 + \Vert v^n \Vert_2^4 + \Vert \nabla v^n \Vert_2^{10} \right] + \frac{1}{48C_1} \Vert \nabla \Delta v^n \Vert_2^2.
    \end{aligned}
\end{align}
Hence, for $d \in \{2,3\}$, combining \eqref{201125_01} and \eqref{201125_02} and using the inequality $x^q \leq 1 + x^p$ for $0<q<p$, there exists a constant $\tilde{C}_6>0$ such that
\begin{align}\label{201125_03}
    \begin{aligned}
        &\left| \tau\int_{\Lambda} \nabla \operatorname{div}(\mathbf{v}^nf(v^n)) \nabla \Delta v^n \, dx \right| \\
        &\leq \frac{\tau}{4} \Vert \nabla \Delta v^n \Vert_2^2 + \tau \tilde{C}_6 \left[ \Vert \Delta v^n \Vert_2^2 \Vert \nabla v^n \Vert_2^2 + \Vert \nabla v^n \Vert_2^{10} + \Vert v^n \Vert_2^4 + \Vert \Delta v^n \Vert_2^2 + 1\right].
    \end{aligned}
\end{align}
Following the arguments in \cite{SZ25}, for a constant $\tilde{C}_7$, we get
\begin{align}\label{201125_06}
    \begin{aligned}
        &\erww{(\Delta g(v^{n-1}) \Delta_n W, \Delta v^n)_2} \leq \frac{1}{4} \erww{\Vert \Delta v^n - \Delta v^{n-1} \Vert_2^2} + \tau \erww{\Vert \Delta g(v^{n-1})\Vert_2^2} \\
        &\leq \frac{1}{4} \erww{\Vert \Delta v^n - \Delta v^{n-1} \Vert_2^2}  + \frac{\tau}{2} \erww{\Vert \nabla \Delta v^{n-1} \Vert_2^2} \\
        &+ \tau \tilde{C}_7 \erww{\Vert \Delta v^{n-1} \Vert_2^2 + \Vert \Delta v^{n-1} \Vert_2^2 \Vert \nabla v^{n-1} \Vert_2^2 + \Vert \nabla v^{n-1} \Vert_2^{10} + \Vert v^{n-1} \Vert_2^4}.
    \end{aligned}
\end{align}
Taking the expectation in \eqref{201125_05} and using \eqref{201125_04}, \eqref{201125_03} and \eqref{201125_06} yields the existence of a constant $\tilde{C}_8>0$ such that
\begin{align*}
    &\frac{1}{2} \erww{\Vert \Delta v^n \Vert_2^2} - \frac{1}{2} \erww{\Vert \Delta v^{n-1} \Vert_2^2} + \frac{1}{4} \erww{\Vert \Delta v^n - \Delta v^{n-1} \Vert_2^2} + \frac{5\tau}{8} \erww{\Vert \nabla \Delta v^n \Vert_2^2} \\
    &\leq \tau \tilde{C}_8 \erww{\Vert \Delta v^n \Vert_2^2 \Vert \nabla v^n \Vert_2^2 + \Vert \nabla v^n \Vert_2^{10} + \Vert v^n \Vert_2^4 + 1} + \tau \tilde{C}_6 \Vert \Delta v^n \Vert_2^2 \\
    &+ \tau \tilde{C}_7 \erww{\Vert \Delta v^{n-1} \Vert_2^2 + \Vert \Delta v^{n-1} \Vert_2^2 \Vert \nabla v^{n-1} \Vert_2^2 + \Vert \nabla v^{n-1} \Vert_2^{10} + \Vert v^{n-1} \Vert_2^4} + \frac{\tau}{2} \erww{\Vert \nabla \Delta v^{n-1} \Vert_2^2}.
\end{align*}
Exchanging $n$ and $m$ and summing $m$ from $1$ to $n$ yields:
\begin{align*}
    &\frac{1}{2} \erww{\Vert \Delta v^n \Vert_2^2} + \frac{1}{4} \sum\limits_{m=1}^n \erww{\Vert \Delta v^m - \Delta v^{m-1} \Vert_2^2} + \frac{5\tau}{8} \sum\limits_{m=1}^n  \erww{\Vert \nabla \Delta v^m \Vert_2^2} \\
    &\leq \frac{1}{2}\erww{\Vert \Delta v^0 \Vert_2^2} + \tau \tilde{C}_8 \sum\limits_{m=1}^n \erww{\Vert \Delta v^m \Vert_2^2 \Vert \nabla v^m \Vert_2^2 + \Vert \nabla v^m \Vert_2^{10} + \Vert v^m \Vert_2^4 + 1} \\
    & + \tau \tilde{C}_6 \sum\limits_{m=1}^n \erww{\Vert \Delta v^m \Vert_2^2} + \frac{\tau}{2} \sum\limits_{m=1}^n \erww{\Vert \nabla \Delta v^{m-1} \Vert_2^2} \\
    & + \tau \tilde{C}_7 \sum\limits_{m=1}^n \erww{\Vert \Delta v^{m-1} \Vert_2^2 + \Vert \Delta v^{m-1} \Vert_2^2 \Vert \nabla v^{m-1} \Vert_2^2 + \Vert \nabla v^{m-1} \Vert_2^{10} + \Vert v^{m-1} \Vert_2^4}  .
\end{align*}
Hence by Corollary \ref{Corollary 241125_01} we get
\begin{align*}
    &\left(\frac{1}{2} - \tau \tilde{C}_6 \right) \erww{\Vert \Delta v^n \Vert_2^2} + \frac{1}{4} \sum\limits_{m=1}^n \erww{\Vert \Delta v^m - \Delta v^{m-1} \Vert_2^2} + \frac{\tau}{8} \sum\limits_{m=1}^n  \erww{\Vert \nabla \Delta v^m \Vert_2^2} \\
    &\leq \frac{1}{2}\erww{\Vert \Delta v^0 \Vert_2^2} + \tilde{C}_8(C_2 + T) + \tilde{C}_7 C_2(1+T) + \frac{\tau}{2} \erww{\Vert \nabla \Delta v^0 \Vert_2^2} \\
    &+  \tau (\tilde{C}_6 + \tilde{C}_7) \sum\limits_{m=1}^n \erww{\Vert \Delta v^{m-1} \Vert_2^2}.
\end{align*}
Now we choose $N_3 \in \N$ such that for all $N \geq N_3$ we have $1/2 - \tau \tilde{C}_6 \geq 1/4$. Then, by setting $\tilde{C}_9 := 4(\tilde{C}_8(C_2 + T) + \tilde{C}_7 C_2(1+T))$ and $\tilde{C}_{10}:= 4(\tilde{C}_6 + \tilde{C}_7) $
\begin{align}\label{eq 241125_02}
    \begin{aligned}
        &\erww{\Vert \Delta v^n \Vert_2^2} + \sum\limits_{m=1}^n \erww{\Vert \Delta v^m - \Delta v^{m-1} \Vert_2^2} + \frac{\tau}{2} \sum\limits_{m=1}^n  \erww{\Vert \nabla \Delta v^m \Vert_2^2} \\
        &\leq 2\erww{\Vert \Delta v^0 \Vert_2^2} + \tilde{C}_9+ 2\tau \erww{\Vert \nabla \Delta v^0 \Vert_2^2} +  \tilde{C}_{10}\tau \sum\limits_{m=1}^n \erww{\Vert \Delta v^{m-1} \Vert_2^2}.
    \end{aligned}
\end{align}
Applying Lemma \ref{Gronwall} we may conclude the existence of a constant $C$ such that
\begin{align*}
\sup_{n \in \{1,...,N\}} \erww{\Vert \Delta v^n \Vert_2^2} \leq C \left(1 + \erww{\Vert \Delta v^0 \Vert_2^2 +\tau \Vert \nabla \Delta v^0 \Vert_2^2}\right).
\end{align*}
Now, using this estimate in \eqref{eq 241125_02} with $n=N$ yields the assertion.
\end{proof}
\section{Stability estimates for a stochastic non-linear parabolic equation}
\subsection{A regularity result}
\begin{prop}\label{240312_prop1}
Let \ref{A1} to \ref{A5} be satisfied. Then, the unique variational solution $u$ to \eqref{equation} has the additional regularity $u\in L^2(\Omega;\mathcal{C}([0,T];H^2(\Lambda)))$ and satisfies the weak homogeneous Neumann boundary condition.
\end{prop}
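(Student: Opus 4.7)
The plan is to derive the additional $H^2$-regularity and time-continuity of the variational solution $u$ by bootstrapping from the semi-implicit Euler scheme \eqref{ES} analyzed in Section 4, using the a-priori estimates of Lemmas \ref{Lemma 250923_01}--\ref{Lemma 250923_03}.

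First I establish uniform $H^2$-bounds on the approximations. Under \ref{A1}--\ref{A5}, the assumption $u_0 \in L^2(\Omega; H^2(\Lambda)) \cap L^{16}(\Omega; H^1(\Lambda))$ from \ref{A2} guarantees that the hypotheses of Lemma \ref{Lemma 250923_03} are met (with $v^0 = u_0$ as the piecewise constant spatial initialization of the scheme), yielding a constant $C>0$ independent of $N$ such that $\sup_n \E[\|\Delta v^n\|_2^2] \leq C$. Since each $v^n$ satisfies the weak homogeneous Neumann boundary condition, combining this with Lemmas \ref{Lemma 250923_01} and \ref{Theorem 110124_01} gives $\sup_n \E[\|v^n\|_{H^2}^2] \leq C$.

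Next I pass to the limit $\tau \to 0$. Define the piecewise constant interpolation $\bar v_\tau(t) := \sum_{n=1}^N v^n \mathds{1}_{[t_{n-1},t_n)}(t)$. By the uniform bound above, the family $\{\bar v_\tau\}_N$ is bounded in $L^2(\Omega; L^\infty(0,T; H^2(\Lambda)))$, so a subsequence converges weakly-$\ast$ to some $\tilde u$ in this space. On the other hand, standard convergence results for semi-implicit Euler discretizations of monotone parabolic SPDEs (see e.g.\ \cite{GM05, GM07, MajPro18}) ensure that $\bar v_\tau \to u$ strongly in $L^2(\Omega; L^2(0,T; L^2(\Lambda)))$, where $u$ is the unique variational solution of \eqref{equation}. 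Uniqueness of limits identifies $\tilde u = u$, so that $u \in L^2(\Omega; L^\infty(0,T; H^2(\Lambda)))$; moreover, testing the weak Neumann condition against an arbitrary $w \in H^1(\Lambda)$ passes to the limit.

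For the continuity statement, I apply It\^{o}'s formula to $t \mapsto \|\Delta u(t)\|_2^2$. Formally applying $\Delta$ to \eqref{240311_eq1}, the process $\Delta u$ satisfies a stochastic evolution equation with drift $\Delta^2 u - \Delta \operatorname{div}(\mathbf{v} f(u)) + \Delta \beta(u)$ and noise $\Delta g(u)\,dW$. Working in the Gelfand triple $H^1 \hookrightarrow L^2 \hookrightarrow (H^1)'$ applied to $\Delta u$, or equivalently integrating by parts in the weak formulation against $\Delta u$ as test function and exploiting $\mathbf{v}\cdot \vec{n}=0$, the Neumann condition on $u$, and the $\mathcal{C}^2$-regularity of $f,\beta,g$ from \ref{A1}, \ref{A3}--\ref{A5}, one obtains an integral identity expressing $\|\Delta u(t)\|_2^2$ whose right-hand side has continuous sample paths in $t$. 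Combined with weak continuity of $u(\cdot)$ in $H^2(\Lambda)$ (inherited from the $L^\infty$-in-time $H^2$-bound together with the known $\mathcal{C}([0,T]; L^2(\Lambda))$-continuity), this upgrades to norm continuity, giving $u \in L^2(\Omega; \mathcal{C}([0,T]; H^2(\Lambda)))$. The main obstacle is the rigorous justification of the It\^{o} formula at the $H^2$-level: because $\Delta u$ itself does not satisfy the Neumann boundary condition, the boundary terms appearing in the integration by parts must be controlled carefully, most likely via a Galerkin approximation in the eigenbasis of the Neumann Laplacian, where the identity holds classically and the needed estimates are uniform in the approximation parameter.
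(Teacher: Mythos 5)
Your overall strategy --- bootstrapping $H^2$-regularity from the uniform estimates for the semi-implicit Euler scheme and passing to the limit --- is a reasonable route (the paper itself simply defers to \cite{SZ25}), but as written there are three genuine gaps. First, Lemma \ref{Lemma 250923_03} requires $\Delta v^0\in L^2(\Omega;H^1(\Lambda))$, and its bound involves $\tau\,\erww{\Vert\nabla\Delta v^0\Vert_2^2}$; assumption \ref{A2} only gives $u_0\in L^2(\Omega;H^2(\Lambda))$, so you cannot take $v^0=u_0$ directly. You would have to mollify the initial datum as in Section \ref{Convergence rates} (conditions $i.)$--$iii.)$ there) and then argue that the limit is still the solution emanating from $u_0$. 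Second, the compactness step is based on a bound that you do not actually have: Lemmas \ref{Lemma 250923_01} and \ref{Lemma 250923_03} give $\sup_n\erww{\Vert v^n\Vert_{H^2}^2}\leq C$, which yields boundedness in $L^\infty(0,T;L^2(\Omega;H^2(\Lambda)))$, \emph{not} in $L^2(\Omega;L^\infty(0,T;H^2(\Lambda)))$ --- the supremum over $n$ cannot be moved inside the expectation without an additional martingale (Burkholder--Davis--Gundy) argument applied to the stochastic terms in the discrete energy identity. Without that, you cannot even obtain the pathwise $L^\infty_t H^2$ bound that your weak-continuity argument later relies on.

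Third, and most importantly, the part of the statement that carries the real content --- $u\in L^2(\Omega;\mathcal{C}([0,T];H^2(\Lambda)))$, i.e.\ \emph{norm continuity} in time at the $H^2$ level --- is not proved. Your plan is to apply It\^{o}'s formula to $t\mapsto\Vert\Delta u(t)\Vert_2^2$, but you yourself identify the justification of this formula as ``the main obstacle'' and only gesture at a Galerkin approximation in the Neumann eigenbasis; since $\Delta u$ need not satisfy the Neumann boundary condition, the integration by parts producing the drift term $-\Vert\nabla\Delta u\Vert_2^2$ generates boundary contributions that must be shown to vanish, and this is precisely where the work lies. The subsequent upgrade from weak to strong continuity also presupposes continuity of $t\mapsto\Vert\Delta u(t)\Vert_2$, which is exactly the output of the unjustified It\^{o} identity. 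Finally, be careful not to identify the limit of the Euler iterates via Lemma \ref{Lemma 220923_03}: its proof uses the stability estimates of Lemmas \ref{Lemma 220923_01} and \ref{Lemma 220923_02}, which in turn rely on Proposition \ref{240312_prop1}, so that route would be circular; your appeal to the external convergence theory for monotone equations avoids this, but should then verify that the convection term $\operatorname{div}(\mathbf{v}f(u))$ fits that framework (it does, since \ref{A5} makes it skew-symmetric).
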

\begin{rem}
Additionally we make sure that $\Delta u(t)$, the Neumann-Laplacian of $u$ at time $t \in [0,T]$, is an element of $L^2(\Lambda)$. Actually, this is the case if and only if $\Delta u(t)$ satisfies the weak homogeneous Neumann boundary condition for all $t\in [0,T]$.
\end{rem}
\begin{proof} See \cite{SZ25}.
\end{proof}
\subsection{Stability estimates}
For the following results, the regularity $u\in L^2(\Omega;\mathcal{C}([0,T];H^2(\Lambda)))$ of the variational solution to equation \eqref{equation} is crucial. It is provided by Proposition \ref{240312_prop1}, when \ref{A1} - \ref{A5} are satisfied.
\begin{lem}\label{Lemma 220923_01}
Under assumptions \ref{A1} - \ref{A5}, there exists a constant 
\[C_4=C_4(T,\Lambda,\Vert u \Vert_{L^2(\Omega; \mathcal{C}([0,T]; H^2(\Lambda)))}, \|\mathbf{v} \|_{L^{\infty}}, g , f, \beta )>0\] such that for all $s,t \in [0,T]$:
\begin{align*}
\erww{\Vert u(t) - u(s) \Vert_{L^2(\Lambda)}^2} \leq C_4 \, |t-s|.
\end{align*}
\end{lem}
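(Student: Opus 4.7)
The plan is to use the variational formulation \eqref{240311_eq1} to express the increment $u(t) - u(s)$ as the sum of a Bochner integral and an It\^o integral, and then estimate each piece separately using the additional $H^2$-regularity from Proposition \ref{240312_prop1}. Without loss of generality I assume $s \le t$.

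First I would subtract the variational identity at $s$ from the one at $t$ to obtain, $\mathbb{P}$-a.s.\ in $\Omega$ and in $L^2(\Lambda)$,
\begin{align*}
u(t) - u(s) = -\int_s^t \bigl[\Delta u(r) + \operatorname{div}(\mathbf{v}f(u(r))) - \beta(u(r))\bigr]\,dr + \int_s^t g(u(r))\,dW(r).
\end{align*}
Squaring the $L^2$-norm and applying $(a+b)^2 \le 2a^2 + 2b^2$ separates the deterministic drift from the stochastic part. For the drift part I would use Jensen's inequality in time,
\begin{align*}
\Bigl\| \int_s^t [\Delta u(r) + \operatorname{div}(\mathbf{v}f(u(r))) - \beta(u(r))]\,dr \Bigr\|_2^2 \le |t-s| \int_s^t \bigl\| \Delta u(r) + \operatorname{div}(\mathbf{v}f(u(r))) - \beta(u(r)) \bigr\|_2^2\,dr,
\end{align*}
then bound the integrand pointwise: $\|\Delta u(r)\|_2 \le \|u(r)\|_{H^2}$; using $\operatorname{div}(\mathbf{v})=0$ together with $\mathbf{v}\in L^\infty$ and $f'$ bounded, $\|\operatorname{div}(\mathbf{v}f(u(r)))\|_2 = \|\mathbf{v}\cdot f'(u(r))\nabla u(r)\|_2 \le \|\mathbf{v}\|_\infty \|f'\|_\infty \|\nabla u(r)\|_2$; and $\|\beta(u(r))\|_2 \le L_\beta \|u(r)\|_2$ since $\beta(0)=0$. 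Each of these is controlled by $\|u(r)\|_{H^2}$, so taking expectation and using $u \in L^2(\Omega;\mathcal{C}([0,T];H^2(\Lambda)))$ delivers a bound of order $|t-s|^2$.

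For the stochastic part I would apply It\^o's isometry in $L^2(\Lambda)$:
\begin{align*}
\erww{\Bigl\|\int_s^t g(u(r))\,dW(r)\Bigr\|_2^2} = \int_s^t \erww{\|g(u(r))\|_2^2}\,dr.
\end{align*}
Since $g$ is Lipschitz, $\|g(u(r))\|_2^2 \le 2L_g^2 \|u(r)\|_2^2 + 2 g(0)^2 |\Lambda|$, which is uniformly bounded in $r$ in expectation by the regularity of $u$. This contributes a term of order $|t-s|$.

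Adding the two contributions and absorbing $|t-s|^2 \le T|t-s|$ gives the desired bound with $C_4$ depending only on $T$, $\Lambda$, $\|u\|_{L^2(\Omega;\mathcal{C}([0,T];H^2(\Lambda)))}$, $\|\mathbf{v}\|_\infty$, and the Lipschitz/boundedness data of $g, f, \beta$. There is no real obstacle here: the whole proof hinges on having the $H^2$-regularity of Proposition \ref{240312_prop1} in hand, since this is what makes $\Delta u$ a genuine $L^2$-function to which Jensen's inequality in time can be applied; without that regularity one would need a duality argument with an $H^{-1}$ norm.
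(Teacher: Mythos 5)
Your proof is correct, but it takes a genuinely different route from the paper. The paper sets $v(t)=u(t)-u(s)$ and applies It\^{o}'s formula to the functional $x\mapsto \frac{1}{2}\Vert x\Vert_{L^2}^2$, which produces the energy identity
\begin{align*}
\frac{1}{2}\Vert v(t)\Vert_{L^2}^2=\int_s^t(\Delta u(r),v(r))_2\,dr-\int_s^t(\operatorname{div}(\mathbf{v}f(u(r))),v(r))_2\,dr+\int_s^t(\beta(u(r)),v(r))_2\,dr+\int_s^t(v(r),g(u(r))\,dW)_2+\frac{1}{2}\int_s^t\Vert g(u(r))\Vert_{L^2}^2\,dr;
\end{align*}
each pairing is then split by Young's inequality into a piece of order $|t-s|$ (controlled by the $\mathcal{C}([0,T];H^2)$-regularity) and the quantity $\erww{\int_s^t\Vert v(r)\Vert_{L^2}^2\,dr}$, which is removed at the end by Gronwall's lemma. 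You instead square the increment directly, separate drift and noise with $(a+b)^2\leq 2a^2+2b^2$, treat the drift with Jensen's inequality in time (yielding an $O(|t-s|^2)$ contribution) and the noise with the It\^{o} isometry (yielding $O(|t-s|)$), then absorb $|t-s|^2\leq T|t-s|$. Your argument is the more elementary one: it needs neither the infinite-dimensional It\^{o} formula nor Gronwall, and the martingale term disappears by the isometry rather than by taking expectation of a stochastic integral. Both proofs hinge on exactly the same input, namely the $H^2$-regularity of Proposition \ref{240312_prop1}, which makes $\Delta u(r)$ and $\operatorname{div}(\mathbf{v}f(u(r)))=\mathbf{v}\cdot f'(u(r))\nabla u(r)$ genuine $L^2(\Lambda)$-functions with $\erww{\sup_{r\in[0,T]}\Vert u(r)\Vert_{H^2}^2}<\infty$, and both yield a constant with the stated dependencies; the paper's Gronwall step merely trades your crude $|t-s|^2\leq T|t-s|$ absorption for an exponential factor $e^{2|t-s|}\leq e^{2T}$.
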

\begin{proof}
For any fixed  $s \in [0,T]$, then we have for each $t \in [0,T]$
\begin{align}\label{Eq L2.1}
v(t):= u(t) - u(s) = \int_s^t \big( \Delta u(r) - {\operatorname{div}} (\mathbf{v} f(u(r))) + \beta(u(r)) \big) \, dr + \int_s^t g(u(r)) \, dW.
\end{align}
 We apply It\^{o} formula to the functional $x\mapsto \frac{1}{2} \|x\|_{L^2}^2$ on $v$ to have
\begin{align*}
\frac{1}{2} \Vert v(t) \Vert_{L^2}^2 = \int_s^t (\Delta u(r), v(r))_2 \,dr & - \int_s^t ({\operatorname{div}} (\mathbf{v} f(u(r))), v(r))_2 \,dr + \int_s^t (\beta (u(r)), v(r))_2 \,dr   \\ 
&  + \int_s^t (v(r), g(u(r)) \, dW)_2 + \frac{1}{2} \int_s^t \Vert g(u(r)) \Vert_{L^2}^2 \, dr.
\end{align*}
Applying the expectation and  Young's inequality along with the assumptions \ref{A1}-\ref{A5} we obtain:
\begin{align*}
& \frac{1}{2} \erww{\Vert v(t) \Vert_{L^2}^2} \\
& \leq \frac{1}{2} \erww{\int_s^t \Vert \Delta u(r) \Vert_{L^2}^2 \, dr} +  \frac{1}{2} \erww{\int_s^t \Vert v(r)\Vert_{L^2}^2 \,dr}  + \frac{1}{2}  (L_f \, \|\mathbf{v} \|_{L^{\infty}}^{2} + L_{\beta}) \erww{\int_s^t \Vert u(r) \Vert_{L^2}^2 \, dr}   \\
& \quad + \frac{1}{2} \Vert \nabla u \Vert_{L^2(\Omega; \mathcal{C}([0,T]; L^2(\Lambda)))}^2 |t-s| + \frac{1}{2} \erww{\int_s^t \Vert v(r)\Vert_{L^2}^2 \,dr}  +\frac{1}{2} C_g \erww{\int_s^t |\Lambda| + \Vert u(r) \Vert_{L^2}^2 \, dr} \\
&  \leq  \frac{1}{2}\Vert \Delta u \Vert_{L^2(\Omega, \mathcal{C}([0,T]; L^2(\Lambda)))}^2 |t-s|+ \erww{\int_s^t \Vert v(r)\Vert_{L^2}^2 \,dr}  + \frac{1}{2} \Vert \nabla u \Vert_{L^2(\Omega; \mathcal{C}([0,T]; L^2(\Lambda)))}^2 |t-s| \\
& \quad+\frac{1}{2} C_g |\Lambda| |t-s| + \frac{1}{2} (L_f \, \|\mathbf{v} \|_{L^{\infty}}^{2} + L_{\beta} + C_g ) \Vert u\Vert_{L^2(\Omega, \mathcal{C}([0,T]; L^2(\Lambda)))}^2 |t-s| \\
 & = C_4 |t-s| +  \erww{\int_s^t \Vert v(r)\Vert_{L^2}^2 \,dr},
\end{align*}
where $C_4=C_4(T,\Lambda,\Vert u \Vert_{L^2(\Omega; \mathcal{C}([0,T]; H^2(\Lambda)))}, \|\mathbf{v} \|_{L^{\infty}}, g , f, \beta )>0$. Now, Gronwall's lemma yields the assertion.
\end{proof}
\begin{lem}\label{Lemma 220923_02}
Under assumptions \ref{A1} - \ref{A5}, there exists a constant
\[C_5=C_5(T,\Lambda,\Vert u \Vert_{L^2(\Omega; \mathcal{C}([0,T]; H^2(\Lambda)))}, \Vert g' \Vert_{\infty}, \|\mathbf{v} \|_{L^{\infty}},  \|f' \|_{L^{\infty}}, \beta)>0\] 
such that for all $s,t \in [0,T]$:
\begin{align*}
\erww{\Vert \nabla(u(t) - u(s)) \Vert_{L^2(\Lambda)}^2} \leq C_5 \,|t-s|.
\end{align*}
\end{lem}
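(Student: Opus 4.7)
The plan is to mirror the structure of Lemma \ref{Lemma 220923_01}, but rather than applying It\^o to $\frac{1}{2}\|\cdot\|_{L^2}^2$, we apply it to the functional $\phi:H^1(\Lambda)\to\R$, $\phi(x)=\frac{1}{2}\|\nabla x\|_{L^2}^2$. Since by Proposition \ref{240312_prop1} we have $u\in L^2(\Omega;\mathcal{C}([0,T];H^2(\Lambda)))$ and $u$ satisfies the weak Neumann condition, formally $\phi'(x)=-\Delta x\in L^2(\Lambda)$ and $\phi''(x)[y_1,y_2]=(\nabla y_1,\nabla y_2)_{L^2}$, so the application of It\^o's formula is justified in the variational setting.

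Fixing $s\in[0,T]$ and setting $v(t)=u(t)-u(s)$, I would write, for $t\geq s$,
\begin{align*}
\tfrac{1}{2}\|\nabla v(t)\|_{L^2}^2
&= \int_s^t (-\Delta v(r),\Delta u(r)-\operatorname{div}(\mathbf{v}f(u(r)))+\beta(u(r)))_2\,dr \\
&\quad+ \int_s^t (-\Delta v(r),g(u(r))\,dW) + \tfrac{1}{2}\int_s^t \|\nabla g(u(r))\|_{L^2}^2\,dr,
\end{align*}
and then take the expectation so that the stochastic integral term vanishes. The key cancellation comes from the Laplacian contribution: writing $\Delta u(r)=\Delta v(r)+\Delta u(s)$ yields $(-\Delta v,\Delta u)_2=-\|\Delta v\|_{L^2}^2-(\Delta v,\Delta u(s))_2$, and the last inner product is controlled by Young's inequality at the cost of a $\tfrac14\|\Delta v\|_{L^2}^2$ term plus $\|\Delta u(s)\|_{L^2}^2$.

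For the convection contribution I would use $\operatorname{div}\mathbf{v}=0$ to rewrite $\operatorname{div}(\mathbf{v}f(u))=f'(u)\mathbf{v}\cdot\nabla u$, so
\[
|(\Delta v,\operatorname{div}(\mathbf{v}f(u)))_2|\leq \tfrac14\|\Delta v\|_{L^2}^2+\|f'\|_\infty^2\|\mathbf{v}\|_\infty^2\|\nabla u\|_{L^2}^2,
\]
and analogously $|(-\Delta v,\beta(u))_2|\leq \tfrac14\|\Delta v\|_{L^2}^2+L_\beta^2\|u\|_{L^2}^2$. Combining these three estimates absorbs $\tfrac34\|\Delta v\|_{L^2}^2$ into $-\|\Delta v\|_{L^2}^2$, leaving a favourable $-\tfrac14\|\Delta v\|_{L^2}^2$ on the left and a right-hand side bounded pointwise by a constant times $1+\|u\|_{H^2}^2$. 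The It\^o correction is handled directly via $\|\nabla g(u)\|_{L^2}^2\leq \|g'\|_\infty^2\|\nabla u\|_{L^2}^2$.

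Taking expectations and using $u\in L^2(\Omega;\mathcal{C}([0,T];H^2(\Lambda)))$ to bound $\erww{\sup_{r\in[0,T]}(\|\Delta u(r)\|_{L^2}^2+\|\nabla u(r)\|_{L^2}^2+\|u(r)\|_{L^2}^2)}$ by a finite constant, I would obtain
\[
\tfrac12\erww{\|\nabla v(t)\|_{L^2}^2}+\tfrac14\erww{\int_s^t\|\Delta v(r)\|_{L^2}^2\,dr}\leq C_5\,|t-s|,
\]
with $C_5$ depending on the quantities listed in the statement, from which the claim follows. Unlike in Lemma \ref{Lemma 220923_01}, no Gronwall step is needed because the $H^2$-regularity of $u$ already gives a linear-in-time bound on the drift contributions. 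The main technical point to be careful about is the rigorous justification of the It\^o formula for $\phi$ applied to $v$, and the integration by parts used to dispose of the boundary terms when expanding $(-\Delta v,\operatorname{div}(\mathbf{v}f(u)))_2$; here one uses $\mathbf{v}\cdot\vec n=0$ on $\partial\Lambda$ and the weak Neumann condition for $u$.
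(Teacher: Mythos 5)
Your proposal is correct and follows essentially the same route as the paper: apply It\^{o}'s formula to the squared $H^1$-seminorm of $v(t)=u(t)-u(s)$, estimate each drift contribution by Young's inequality (using $\operatorname{div}\mathbf{v}=0$ and $\mathbf{v}\cdot\vec n=0$ for the convection term and $\|\nabla g(u)\|_{L^2}^2\le\|g'\|_\infty^2\|\nabla u\|_{L^2}^2$ for the It\^{o} correction), and bound everything by the $L^2(\Omega;\mathcal{C}([0,T];H^2(\Lambda)))$-norm of $u$ from Proposition \ref{240312_prop1}, with no Gronwall step needed. The only cosmetic difference is that you retain a negative $-\tfrac14\int_s^t\|\Delta v\|_{L^2}^2\,dr$ on the left, whereas the paper simply bounds $\|\Delta v(r)\|_{L^2}^2\le 2\|\Delta u(r)\|_{L^2}^2+2\|\Delta u(s)\|_{L^2}^2$ and moves it to the right; both yield the stated linear-in-$|t-s|$ estimate.
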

\begin{proof}
For any fixed $s \in [0,T]$, we set $v(t):= u(t) - u(s)$ for each $t \in [0,T]$. Now, we apply It\^{o} formula to the functional $x\mapsto \|x\|_{L^2}^2$ on $\nabla v$ for \eqref{Eq L2.1} to obtain
\begin{align*}
\frac{1}{2} \Vert \nabla v(t) \Vert_2^2 = &- \int_s^t (\Delta u(r), \Delta v(r))_2 \,dr + \int_s^t ({\operatorname{div}} (\mathbf{v} f(u(r))), \Delta v(r))_2 \,dr \\
&  - \int_s^t (\beta (u(r)), \Delta v(r))_2 \,dr - \int_s^t (\Delta v(r), g(u(r)) \, dW)_2 \\
& - \frac{1}{2} \int_s^t (g(u(r)), \Delta g(u(r))_2 \, dr. 
\end{align*}
Taking the expectation on both sides of the equality, applying Gauss' theorem and Young's inequality along with the assumptions \ref{A2}-\ref{A5} yields
\begin{align*}
& \frac{1}{2} \erww{\Vert \nabla v(t) \Vert_2^2}  \\
 & \leq- \erww{\int_s^t (\Delta u(r), \Delta u(r) - \Delta u(s))_2 \,dr} + \frac{1}{2}  \|\mathbf{v} \|_{L^{\infty}}^2  \|f' \|_{L^{\infty}}^2 \erww{\int_s^t \Vert \nabla u(r) \Vert_2^2 \, dr} \\
& \quad +  \erww{\int_s^t \Vert \Delta (u(r) - \Delta u(s)) \Vert_2^2 \, dr} + \frac{1}{2} \, C_{\beta} \, \erww{\int_s^t \Vert u(r) \Vert_2^2 \, dr}  + \frac{1}{2} \erww{\int_s^t \Vert \nabla g(u(r)) \Vert_2^2 \, dr} \\
& \leq \, \erww{\int_s^t (\Delta u(r), \Delta u(s))_2 \,dr} -  \erww{\int_s^t \Vert \Delta u(r) \Vert_2^2 \, dr} + \frac{1}{2}  \|\mathbf{v} \|_{L^{\infty}}^2  \|f' \|_{L^{\infty}}^2 \erww{\int_s^t \Vert \nabla u(r) \Vert_2^2 \, dr}\\
& \quad  + 2 \erww{\int_s^t \Vert \Delta u(r) \Vert_2^2 \, dr} + 2 \erww{\int_s^t \Vert \Delta u(s) \Vert_2^2 \, dr}  + \frac{1}{2} \, C_{\beta} \, \erww{\int_s^t \Vert u(r) \Vert_2^2 \, dr} \\
& \quad + \frac{1}{2} \Vert g' \Vert_{\infty}^2 \erww{\int_s^t \Vert \nabla u(r) \Vert_2^2 \, dr} \\
& \leq \, \frac{5}{2} \Vert \Delta u \Vert_{L^2(\Omega; \mathcal{C}([0,T]; L^2(\Lambda)))}^2 |t-s| + \frac{1}{2} \,  C_{\beta} \, \Vert  u \Vert_{L^2(\Omega; \mathcal{C}([0,T]; L^2(\Lambda)))}^2 |t-s|\\
& \quad  +  \frac{3}{2} \erww{\int_s^t \Vert \Delta u(r) \Vert_2^2 \, dr} + \frac{1}{2} (  \|\mathbf{v} \|_{L^{\infty}}^2  \|f' \|_{L^{\infty}}^2  + \Vert g' \Vert_{\infty}^2 \Vert) \nabla u \Vert_{L^2(\Omega; \mathcal{C}([0,T]; L^2(\Lambda)))}^2 |t-s| \\
 &  \leq \, C_5 |t-s|,
\end{align*}
where $C_5=C_5(T,\Lambda,\Vert u \Vert_{L^2(\Omega; \mathcal{C}([0,T]; H^2(\Lambda)))}, \Vert g' \Vert_{\infty}, \|\mathbf{v} \|_{L^{\infty}},  \|f' \|_{L^{\infty}}, \beta)>0$.
\end{proof}
\section{Convergence rates}\label{Convergence rates}
From now on we assume that \ref{A1} to \ref{A5} are satisfied and $u$ is a solution to \eqref{equation}. Then, by Proposition \ref{240312_prop1}, $u$ is in $L^2(\Omega; \mathcal{C}([0,T]; H^2(\Lambda)))$ and satisfies the homogeneous Neumann boundary condition in the weak sense. Moreover, let us assume that $(v_N^0)_{N \in \N} \subset L^2(\Omega; H^3(\Lambda)) \cap L^{16}(\Omega; H^1(\Lambda))$ is a sequence such that
\begin{itemize}
    \item[$i.)$] $\sup\limits_{N \in \N} \erww{\Vert v_N^0 \Vert_{H^3}^2 + \Vert v_N^0 \Vert_{H^1}^{16}} < \infty$,
    \item[$ii.)$] $\sup\limits_{N \in \N} \frac{1}{N}\erww{\Vert \nabla \Delta v_N^0 \Vert_2^2} < \infty$,
    \item[$iii.)$] $\sup\limits_{N \in \N}N \, \erww{\Vert v_N^0 - u_0 \Vert_2^2} < \infty$.
\end{itemize}
The existence of such a sequence is ensured by \cite[Theorem 5.33]{AF03}. A suitable choice is $v_N^0 := \rho_{N^{-1/2}} \ast u_0$, where $\rho$ is a standard mollifier on $\R^{d}$ and $\rho_{\varepsilon}(x) := \varepsilon^{-d} \rho(x/\varepsilon)$.\\
Now, for $N \in \N$ and $n \in \{1,...,N\}$ let $v^n = v_N^n$ be the solution to \eqref{ES} with initial value $v_N^0$. We set $\tilde{N}_0 := \max(N_1, N_2, N_3)$, where $N_1, N_2$ and $N_3$ are the numbers in Lemma \ref{Lemma 250923_01},  Corollary \ref{Corollary 241125_01} and Lemma \ref{Lemma 250923_03}, respectively. Thanks to the assumptions $i.), ii.)$ and $iii.)$ the boundedness results for $v^n$ in Lemma \ref{Lemma 250923_01},  Corollary \ref{Corollary 241125_01} and Lemma \ref{Lemma 250923_03} still hold true uniformly for all $N \geq \tilde{N}_0$.
\begin{lem}\label{Lemma 220923_03}
There exists $\hat{N}_0 \in \N$ with $\hat{N}_0 \geq \tilde{N}_0$ and a constant $C_6>0$ not depending on $N$ such that for all $N \geq \hat{N}_0$:
\begin{align*}
\sup_{n\in \{1,...,N\}} \erww{\Vert u(t_n) - v^n \Vert_2^2} + \tau \sum\limits_{n=1}^N \erww{\Vert \nabla u(t_n) - \nabla v^n) \Vert_2^2} \leq C_6 \tau .
\end{align*}
\end{lem}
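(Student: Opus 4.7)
Set $e^n := u(t_n) - v^n$. The plan is to derive a discrete energy inequality for $e^n$ by subtracting \eqref{ES} from the variational identity \eqref{240311_eq1} integrated over $[t_{n-1},t_n]$, and to close the estimate by the discrete Gronwall Lemma \ref{Gronwall}. This gives the error equation
\begin{align*}
e^n - e^{n-1} &= \int_{t_{n-1}}^{t_n}\bigl[\Delta u(s) - \Delta v^n\bigr]\, ds - \int_{t_{n-1}}^{t_n}\operatorname{div}\bigl[\mathbf{v}(s) f(u(s)) - \mathbf{v}^n f(v^n)\bigr]\, ds \\
&\quad + \int_{t_{n-1}}^{t_n}\bigl[\beta(u(s)) - \beta(v^n)\bigr]\, ds + \int_{t_{n-1}}^{t_n}\bigl[g(u(s)) - g(v^{n-1})\bigr]\, dW(s).
\end{align*}
Testing with $e^n$ in $L^2(\Lambda)$ and using identity \eqref{identity} on the left-hand side, the Laplacian term, after integrating by parts (boundary terms vanish by \ref{A2} and Proposition \ref{240312_prop1}), produces the standard parabolic skeleton $\tfrac{1}{2}(\|e^n\|_2^2 - \|e^{n-1}\|_2^2 + \|e^n-e^{n-1}\|_2^2) + \tau\|\nabla e^n\|_2^2$ on the left, plus the residue $\int_{t_{n-1}}^{t_n}(\nabla(u(s)-u(t_n)),\nabla e^n)_2\,ds$, which Young's inequality together with Lemma \ref{Lemma 220923_02} bounds by $\tfrac{\tau}{8}\|\nabla e^n\|_2^2 + C\tau^2$.

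For the convection piece I split
\[
\mathbf{v}(s)f(u(s)) - \mathbf{v}^n f(v^n) = (\mathbf{v}(s)-\mathbf{v}^n)f(u(s)) + \mathbf{v}^n\bigl[f(u(s)) - f(u(t_n))\bigr] + \mathbf{v}^n\bigl[f(u(t_n)) - f(v^n)\bigr],
\]
integrate by parts against $e^n$ (boundary terms vanish since $\mathbf{v}\cdot\vec n = 0$ on $\partial\Lambda$), and combine $|\mathbf{v}(s)-\mathbf{v}^n|_\infty \leq C\tau$ (from \ref{A5}) with the Lipschitz property of $f$ and the stability estimates in Lemmas \ref{Lemma 220923_01}--\ref{Lemma 220923_02} plus Proposition \ref{240312_prop1}. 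After Young's inequality with a small parameter, all $\|\nabla e^n\|_2^2$ contributions are absorbed into the parabolic energy, leaving an $O(\tau^2)$ rest and an $O(\tau)\,\erww{\|e^n\|_2^2}$ rest. The reaction term is treated analogously via the Lipschitz property of $\beta$ and Lemma \ref{Lemma 220923_01}.

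For the stochastic integrand I write $g(u(s)) - g(v^{n-1}) = [g(u(s)) - g(u(t_{n-1}))] + [g(u(t_{n-1})) - g(v^{n-1})]$ and pair with $e^n = e^{n-1} + (e^n-e^{n-1})$. The pairing with $e^{n-1}$ has zero expectation, since (i) the It\^o integral of the first summand is a martingale vanishing at $t_{n-1}$ and $e^{n-1}$ is $\mathcal{F}_{t_{n-1}}$-measurable, and (ii) the second summand equals $(g(u(t_{n-1}))-g(v^{n-1}))\Delta_n W$, with everything except $\Delta_n W$ being $\mathcal{F}_{t_{n-1}}$-measurable and $\Delta_n W$ independent of $\mathcal{F}_{t_{n-1}}$ with mean zero. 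The pairing with $e^n - e^{n-1}$ is handled by Young, absorbing $\tfrac{1}{4}\|e^n-e^{n-1}\|_2^2$ on the left; the remainder is controlled via the It\^o isometry together with the Lipschitz property of $g$ and Lemma \ref{Lemma 220923_01}, producing $C\tau^2 + C\tau\,\erww{\|e^{n-1}\|_2^2}$.

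Taking expectations, summing from $n=1$ to $m\leq N$ and using $\erww{\|e^0\|_2^2} = \erww{\|u_0 - v_N^0\|_2^2} \leq C\tau$ by hypothesis $iii.)$ on $(v_N^0)$, I obtain
\[
\erww{\|e^m\|_2^2} + \tau\sum_{n=1}^m \erww{\|\nabla e^n\|_2^2} \leq C\tau + C\tau \sum_{n=0}^{m-1}\erww{\|e^n\|_2^2}
\]
for all $N \geq \hat{N}_0$ large enough so that the coefficients in front of $\erww{\|e^n\|_2^2}$ coming from the $\beta$- and convection-terms can be absorbed on the left-hand side. Lemma \ref{Gronwall} then yields the claim. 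The main obstacle is the convection contribution: one crucially needs both the divergence-free condition from \ref{A5} to perform the integration by parts without uncontrollable boundary terms, and the $C^1$-regularity in time of $\mathbf{v}$ to secure the $O(\tau)$ factor on the $(\mathbf{v}(s)-\mathbf{v}^n)f(u(s))$ piece, without which the nonlinearity $f(u)$ would force higher regularity on $u$ than is available.
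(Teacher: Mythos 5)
Your plan is correct and follows essentially the same route as the paper: the same error equation for $e^n=u(t_n)-v^n$, the same telescoping decompositions of the Laplacian, convection, reaction and noise terms (the paper attaches the $\mathbf{v}-\mathbf{v}^n$ discrepancy to $f(v^n)$ rather than $f(u(s))$, an immaterial difference), the same use of Lemmas \ref{Lemma 250923_01}, \ref{Lemma 220923_01}, \ref{Lemma 220923_02}, It\^{o} isometry and the martingale cancellation against $e^{n-1}$, and the same conclusion via $\erww{\Vert e^0\Vert_2^2}\leq C\tau$ and the discrete Gronwall lemma. No gaps.
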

\begin{proof}
We set $e^n:= u(t_n) - v^n$. Then, we have
\begin{align*}
e^n - e^{n-1} & = \int_{t_{n-1}}^{t_n} \Delta (u(s)- v^n) \, ds - \int_{t_{n-1}}^{t_n}\, {\operatorname{div}} (\mathbf{v} f(u(s))) - {\operatorname{div}} (\mathbf{v}^n f(v^n)) \, ds\\
& \quad + \int_{t_{n-1}}^{t_n} \beta(u(s)) - \beta (v^{n})\, ds + \int_{t_{n-1}}^{t_n}g(u(s)) - g(v^{n-1})\, dW \\
&= \int_{t_{n-1}}^{t_n} \Delta (u(s)- u(t_n)) \, ds  + \int_{t_{n-1}}^{t_n} \Delta e^n \, ds\\
& \quad - \int_{t_{n-1}}^{t_n}\, {\operatorname{div}} (\mathbf{v} f(u(s))) - {\operatorname{div}} (\mathbf{v} f(u(t_n))) + {\operatorname{div}} (\mathbf{v} f(u(t_n))) - {\operatorname{div}} (\mathbf{v} f(v^{n}))  \, ds \\
&\quad  +  \int_{t_{n-1}}^{t_n}\, {\operatorname{div}} (\mathbf{v} f(v^{n}))  - {\operatorname{div}} (\mathbf{v}^n f(v^n)) \, ds \\
& \quad + \int_{t_{n-1}}^{t_n} \beta(u(s)) - \beta (u(t_n))\, ds + \int_{t_{n-1}}^{t_n} \beta(u(t_n)) - \beta (v^{n})\, ds \\
&\quad + \int_{t_{n-1}}^{t_n}g(u(s)) - g(u(t_{n-1}))\, dW + \int_{t_{n-1}}^{t_n}g(u(t_{n-1})) - g(v^{n-1})\, dW.
\end{align*}
Now, we multiply with $e^n$ and integrate over $\Lambda$ to obtain
\begin{align*}
\frac{1}{2} &\bigg( \Vert e^n \Vert_2^2 - \Vert e^{n-1} \Vert_2^2 + \Vert e^n - e^{n-1} \Vert_2^2\bigg) = (e^n, e^n - e^{n-1})_2 \\
=& \int_{t_{n-1}}^{t_n} \int_{\Lambda} \Delta (u(s)- u(t_n)) \, e^n \, ds  + \int_{t_{n-1}}^{t_n} \int_{\Lambda} \Delta e^n \cdot e^n\, ds\\
&- \int_{t_{n-1}}^{t_n} \int_{\Lambda} ({\operatorname{div}} (\mathbf{v} f(u(s))) - {\operatorname{div}} (\mathbf{v} f(u(t_n))))\, e^n\, ds  \\
&  + \int_{t_{n-1}}^{t_n} \int_{\Lambda} ({\operatorname{div}} (\mathbf{v} f(u(t_n))) - {\operatorname{div}} (\mathbf{v} f(v^{n})))\, e^n\, ds \\
&+ \int_{t_{n-1}}^{t_n} \int_{\Lambda} ({\operatorname{div}} (\mathbf{v} f(v^{n}))  - {\operatorname{div}} (\mathbf{v}^n f(v^n)))\, e^n\, ds \\
&+ \int_{t_{n-1}}^{t_n} \int_{\Lambda}  (\beta(u(s)) - \beta (u(t_n)) ) \,e^n\, ds  + \int_{t_{n-1}}^{t_n} \int_{\Lambda} ( \beta(u(t_n)) - \beta (v^{n})) \,e^n\, ds \\
&+ \int_{t_{n-1}}^{t_n} \int_{\Lambda}(g(u(s)) - g(u(t_{n-1})))\, e^n\, dW + \int_{t_{n-1}}^{t_n} \int_{\Lambda} (g(u(t_{n-1})) - g(v^{n-1}))\, e^n\, dW.
\end{align*}
Taking expectation we obtain the equality
\begin{align}\label{eq 220923_01}
\frac{1}{2} \erww{ \Vert e^n \Vert_2^2 - \Vert e^{n-1} \Vert_2^2 + \Vert e^n - e^{n-1} \Vert_2^2} = \sum_{i=1}^9 \mathcal{T}_{i},
\end{align}
where 
\begin{align*}
      \mathcal{T}_{1} &= \erww{\int_{t_{n-1}}^{t_n} \int_{\Lambda} \Delta (u(s)- u(t_n)) \, e^n \, ds },  \\  
    \mathcal{T}_{2} &= \erww{ \int_{t_{n-1}}^{t_n} \int_{\Lambda} \Delta e^n \cdot e^n\, ds }, \\
     \mathcal{T}_{3} &= -\erww{\int_{t_{n-1}}^{t_n} \int_{\Lambda} ({\operatorname{div}} (\mathbf{v} f(u(s))) - {\operatorname{div}} (\mathbf{v} f(u(t_n))))\, e^n\, ds},  \\
     \mathcal{T}_{4} & = \erww{\int_{t_{n-1}}^{t_n} \int_{\Lambda} ({\operatorname{div}} (\mathbf{v} f(u(t_n))) - {\operatorname{div}} (\mathbf{v} f(v^{n})))\, e^n\, ds  } ,\\
     \mathcal{T}_{5} & = \erww{\int_{t_{n-1}}^{t_n} \int_{\Lambda} ({\operatorname{div}} (\mathbf{v} f(v^{n}))  - {\operatorname{div}} (\mathbf{v}^n f(v^n)))\, e^n\, ds}, \\
     \mathcal{T}_{6} &= \erww{\int_{t_{n-1}}^{t_n} \int_{\Lambda}  (\beta(u(s)) - \beta (u(t_n)) ) \,e^n\, ds },  \\  
         \mathcal{T}_{7} &= \erww{ \int_{t_{n-1}}^{t_n} \int_{\Lambda} ( \beta(u(t_n)) - \beta (v^{n})) \,e^n\, ds } ,
\end{align*}
\begin{align*}
     \mathcal{T}_{8} &= \erww{\int_{t_{n-1}}^{t_n} \int_{\Lambda}(g(u(s)) - g(u(t_{n-1})))\, e^n\, dW}, \\
    \mathcal{T}_{9} &= \erww{\int_{t_{n-1}}^{t_n} \int_{\Lambda} (g(u(t_{n-1})) - g(v^{n-1}))\, e^n\, dW} .
\end{align*}
Now, using Young's inequality, Lemma \ref{Lemma 250923_01}, Lemma \ref{Lemma 220923_01}, Lemma \ref{Lemma 220923_02} and It\^{o} isometry, for all $N \geq \tilde{N}_0$, it yields
\begin{align*}
   |\mathcal{T}_{1}| &\leq \erww{\bigg| \int_{t_{n-1}}^{t_n} \int_{\Lambda} \nabla (u(s)- u(t_n)) \nabla e^n \, ds \bigg|} \\
& \leq \erww{ \int_{t_{n-1}}^{t_n} \Vert \nabla (u(s)- u(t_n)) \Vert_2^2 \, ds} + \frac{1}{4} \erww{\int_{t_{n-1}}^{t_n}  \Vert \nabla e^n \Vert_2^2 \, ds} \leq C_5 \tau^2 + \frac{\tau}{4} \erww{\Vert \nabla e^n \Vert_2^2}, \\
\mathcal{T}_{2} &= -\tau \erww{\Vert \nabla e^n \Vert_2^2}, \\
|\mathcal{T}_{3}|  &\leq  \erww{\left|\int_{t_{n-1}}^{t_n} \int_{\Lambda} ({\operatorname{div}} (\mathbf{v} f(u(s))) - {\operatorname{div}} (\mathbf{v} f(u(t_n))))\, e^n\, dx \, ds \right|} \\
&\leq 2\Vert \mathbf{v} \Vert_{\infty}^2 L_f^2 \erww{\int_{t_{n-1}}^{t_n} \Vert u(s) - u(t_{n}) \Vert_2^2 \, ds} + \frac{1}{8} \erww{\int_{t_{n-1}}^{t_n} \Vert \nabla e^n \Vert_2^2 \, ds} \\
&\leq 2\Vert \mathbf{v} \Vert_{\infty}^2 L_f^2 \, C_4 \tau^2 + \frac{\tau}{8} \erww{\Vert \nabla e^n \Vert_2^2},  \\
|\mathcal{T}_{4}| &\leq \erww{\left| \int_{t_{n-1}}^{t_n} \int_{\Lambda} ({\operatorname{div}} (\mathbf{v} f(u(t_n)))) - {\operatorname{div}} (\mathbf{v} f(v^{n})))\, e^n\, dx \, ds \right|}\\
&\leq 2\Vert \mathbf{v} \Vert_{\infty}^2 L_f^2 \erww{\int_{t_{n-1}}^{t_n} \Vert u(t_{n}) - v^{n} \Vert_2^2 \, ds} + \frac{1}{8} \erww{\int_{t_{n-1}}^{t_n} \Vert \nabla e^n \Vert_2^2 \, ds}
\\
&= 2\Vert \mathbf{v} \Vert_{\infty}^2 L_f^2 \,\tau \erww{\Vert e^{n} \Vert_2^2} + \frac{\tau}{8} \erww{\Vert \nabla e^n \Vert_2^2},  \\
|\mathcal{T}_{5}| &\leq \erww{\left|\int_{t_{n-1}}^{t_n} \int_{\Lambda} ({\operatorname{div}} (\mathbf{v} f(v^{n}))  - {\operatorname{div}} (\mathbf{v}^n f(v^n)))\, e^n\, dx \, ds \right|} \\
&\leq 2L_f \, \tau \Vert \mathbf{v} - \mathbf{v}^n \Vert_{\infty}^2  C_1
 + \frac{\tau}{4} \erww{\Vert \nabla e^n \Vert_2^2} \leq 2L_f \, \tau^3 \Vert D\mathbf{v}\Vert_{\infty}^2  C_1
 + \frac{\tau}{4} \erww{\Vert \nabla e^n \Vert_2^2} ,\\ 
 |\mathcal{T}_{6}| &= \erww{\int_{t_{n-1}}^{t_n} \int_{\Lambda}  (\beta(u(s)) - \beta (u(t_n)) ) \,e^n\, ds} \\
& \leq \Vert \beta^{'} \Vert_{\infty}^2 \erww{\int_{t_{n-1}}^{t_n} \Vert u(s) - u(t_{n}) \Vert_2^2 \, dx \, ds} + \frac{\tau}{4} \erww{\Vert e^n \Vert_2^2} \leq \Vert \beta^{'} \Vert_{\infty}^2 C_4 \tau^2 + \frac{\tau}{4} \erww{\Vert e^n \Vert_2^2}, \\
\mathcal{T}_{7} &=  \erww{\int_{t_{n-1}}^{t_n} \int_{\Lambda} ( \beta(u(t_n)) - \beta (v^{n})) \,e^n\, dx \, ds}  \leq \Vert \beta^{'} \Vert_{\infty} \erww{\int_{t_{n-1}}^{t_n} \Vert e^{n} \Vert_2^2 \, ds}\\
&= \Vert \beta^{'} \Vert_{\infty} \tau \erww{\Vert e^{n} \Vert_2^2},
\end{align*}
\begin{align*}
|\mathcal{T}_{8}| &\leq 2\erww{\int_{t_{n-1}}^{t_n} \Vert g(u(s)) - g(u(t_{n-1})) \Vert_2^2 \, ds} + \frac{1}{8} \erww{\Vert e^n - e^{n-1} \Vert_2^2} \\
&\leq 2\Vert g' \Vert_{\infty}^2 \erww{\int_{t_{n-1}}^{t_n} \Vert u(s) - u(t_{n-1}) \Vert_2^2 \, ds} + \frac{1}{8} \erww{\Vert e^n - e^{n-1} \Vert_2^2} \\
&\leq 2\Vert g' \Vert_{\infty}^2 C_4 \tau^2 + \frac{1}{8} \erww{\Vert e^n - e^{n-1} \Vert_2^2}, \\
|\mathcal{T}_{9}| &\leq 2\erww{\int_{t_{n-1}}^{t_n} \Vert g(u(t_{n-1})) - g(v^{n-1}) \Vert_2^2 \, ds} + \frac{1}{8} \erww{\Vert e^n - e^{n-1} \Vert_2^2} \\
&\leq 2\Vert g' \Vert_{\infty}^2 \tau \erww{\Vert e^{n-1} \Vert_2^2} + \frac{1}{8} \erww{\Vert e^n - e^{n-1} \Vert_2^2}.
\end{align*}
Exchanging the indices $n$ and $m$ and summing over $m=1,...,n$, $n \in \{1,...,N\}$, equality \eqref{eq 220923_01} and the previous estimates yield the existence of a constant $\tilde{C}>0$ only depending on $T, v, f, \beta, g, C_1, C_4$ and $C_5$ such that
\begin{align*}
&\frac{1}{2} \erww{\Vert e^n \Vert_2^2} + \frac{1}{4} \sum\limits_{m=1}^n \erww{\Vert e^m - e^{m-1} \Vert_2^2} + \frac{\tau}{4} \sum\limits_{m=1}^n \erww{\Vert \nabla e^m \Vert_2^2}\\
&\leq \frac{1}{2} \erww{\Vert e^0 \Vert_2^2} + \tilde{C} \tau + 2\Vert g' \Vert_{\infty}^2 \tau \sum\limits_{m=1}^n \erww{\Vert e^{m-1} \Vert_2^2} + \left(2\Vert \mathbf{v} \Vert_{\infty}^2 L_f^2 + \Vert \beta' \Vert_{\infty}^2 + \frac{1}{4} \right) \,\tau \sum\limits_{m=1}^n \erww{\Vert e^{m} \Vert_2^2} \\
&\leq \frac{1}{2} \erww{\Vert e^0 \Vert_2^2} + \tilde{C} \tau + \left( 2\Vert g' \Vert_{\infty}^2 + 2\Vert \mathbf{v} \Vert_{\infty}^2 L_f^2 + \Vert \beta' \Vert_{\infty}^2 + \frac{1}{4} \right)\tau \sum\limits_{m=1}^n \erww{\Vert e^{m-1} \Vert_2^2} \\
& \quad + \left(2\Vert \mathbf{v} \Vert_{\infty}^2 L_f^2 + \Vert \beta' \Vert_{\infty}^2 + \frac{1}{4} \right) \,\tau \erww{\Vert e^{n} \Vert_2^2}. 
\end{align*}
We can find $\hat{N}_0 \in \N$ with $\hat{N}_0 \geq \tilde{N}_0$ such that $(2\Vert \mathbf{v} \Vert_{\infty}^2 L_f^2 + \Vert \beta' \Vert_{\infty}^2 + 1/4)\tau \leq 1/4$ for all $N \geq \hat{N}_0$. Then, by using the estimate
\begin{equation*}
    \erww{\Vert e^0 \Vert_2^2}= \erww{\Vert v_N^0 - u_0 \Vert_2^2} \leq C \tau
\end{equation*}
for some constant $C>0$. For all $N \geq \hat{N}_0$ we obtain 
\begin{align*}
    &\frac{1}{4} \erww{\Vert e^n \Vert_2^2} + \frac{1}{4} \sum\limits_{m=1}^n \erww{\Vert e^m - e^{m-1} \Vert_2^2} + \frac{\tau}{4} \sum\limits_{m=1}^n \erww{\Vert \nabla e^m \Vert_2^2}\\
    &\leq (\frac{C}{2} + \tilde{C})\tau + \left( 2\Vert g' \Vert_{\infty}^2 + 2\Vert \mathbf{v} \Vert_{\infty}^2 L_f^2 + \Vert \beta' \Vert_{\infty}^2 + \frac{1}{4} \right)\tau \sum\limits_{m=1}^n \erww{\Vert e^{m-1} \Vert_2^2} .
\end{align*}
Now, Lemma \ref{Gronwall} yields the assertion.
\end{proof}
\begin{prop}\label{281123_01}
Let $w \in H^2(\Lambda)$ and $\mathcal{T}$ an admissible mesh. Then, there exists a unique solution $(\tilde{w}_K)_{K \in \mathcal{T}}$ to the following discrete problem:\\
Find $(\tilde{w}_K)_{K \in \mathcal{T}}$ such that 
\begin{align*}
\sum\limits_{K \in \mathcal{T}} m_K \tilde{w}_K = \int_{\Lambda} w \, dx
\end{align*}
and
\begin{align*}
\sum\limits_{\sigma \in \mathcal{E}_K^{int}} \frac{m_{\sigma}}{d_{K|L}} (\tilde{w}_K - \tilde{w}_L) = - \int_{K} \Delta w \, dx, ~~~\forall K \in \mathcal{T}.
\end{align*}
\end{prop}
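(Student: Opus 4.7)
The plan is to view the discrete system as a finite-dimensional linear problem: a symmetric discrete Laplacian equation for the cell averages $(\tilde{w}_K)_{K \in \Tau}$ plus a single scalar constraint that fixes their weighted mean. Let $M := |\Tau|$ and introduce the linear operator $L \colon \R^M \to \R^M$ defined by $(Lu)_K := \sum_{\sigma = K|L \in \mathcal{E}_K^{\operatorname{int}}} \frac{m_{\sigma}}{\dkl}(u_K - u_L)$, so that the second condition of the proposition reads $(L\tilde{w})_K = -\int_K \Delta w\, dx$ for each $K\in\Tau$. Applying Remark \ref{DIBP}, the associated quadratic form satisfies $\sum_K v_K (Lu)_K = \sum_{\sigma \in \edgesint} \frac{m_\sigma}{\dkl}(u_K - u_L)(v_K - v_L)$, which is symmetric and positive semi-definite. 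Moreover, $|u|_{1,h}^2 = 0$ forces $u_K = u_L$ across every internal edge, so by connectedness of $\Lambda$ the kernel of $L$ is exactly the one-dimensional space of constant vectors.

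For uniqueness, suppose $\tilde{w}^{(1)}$ and $\tilde{w}^{(2)}$ both solve the system and set $d := \tilde{w}^{(1)} - \tilde{w}^{(2)}$. Then $L d = 0$ and $\sum_K m_K d_K = 0$. Testing $L d = 0$ against $d$ and using Remark \ref{DIBP} gives $|d|_{1,h}^2 = 0$, so $d$ is constant over $\Tau$, and the mean-zero constraint then forces $d \equiv 0$. For existence, the symmetry of $L$ in the Euclidean inner product yields $\operatorname{Range}(L) = \ker(L)^{\perp} = \bigl\{r \in \R^M : \sum_{K \in \Tau} r_K = 0\bigr\}$. The data $r_K := -\int_K \Delta w\, dx$ satisfies this compatibility because $\int_\Lambda \Delta w\, dx = 0$, which is a consequence of $w$ lying in the domain of the Neumann-Laplacian (the setting adopted throughout the paper). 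Hence by rank-nullity there exists $\hat{w} \in \R^M$ solving the cell-wise equations, unique up to an additive constant, and choosing $c := |\Lambda|^{-1}\bigl(\int_\Lambda w\, dx - \sum_K m_K \hat{w}_K\bigr)$ and setting $\tilde{w}_K := \hat{w}_K + c$ delivers the unique full solution.

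The main subtlety is the compatibility check in the existence part: it rests on interpreting $\Delta w$ via the weak homogeneous Neumann boundary condition, so that integration by parts on $\Lambda$ yields $\int_\Lambda \Delta w\, dx = \int_{\partial \Lambda} \nabla w \cdot \vec{n}\, dS = 0$; this is the canonical setting in which the proposition is used later in the paper. A fully variational alternative, bypassing the explicit rank-nullity computation, is to restrict to the mean-zero subspace $V_h := \{v \in \R^M : \sum_K m_K v_K = 0\}$, observe that the bilinear form $a(u,v) := \sum_{\sigma \in \edgesint} \frac{m_\sigma}{\dkl}(u_K - u_L)(v_K - v_L)$ is coercive on $V_h$ by the discrete Poincar\'{e} inequality (Lemma \ref{Lemma PI}), and invoke Lax--Milgram for the linear functional $v \mapsto -\sum_K v_K \int_K \Delta w\, dx$ on $V_h$.
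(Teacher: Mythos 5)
Your argument is correct and essentially self-contained, whereas the paper does not prove this proposition at all: it simply defers to \cite{Flore21} (Definition 6 and Remark 2). Your route --- symmetry and positive semi-definiteness of the discrete Laplacian via Remark \ref{DIBP}, identification of the kernel with the constants through connectedness, solvability by $\operatorname{Range}(L)=\ker(L)^{\perp}$ together with the compatibility condition $\sum_K r_K = -\int_\Lambda \Delta w\,dx = 0$, and finally fixing the additive constant by the weighted-mean constraint --- is the standard and natural one, and the Lax--Milgram variant on the mean-zero subspace using Lemma \ref{Lemma PI} is an equally valid packaging (though note that recovering the cell-wise equations from the variational formulation tested only against mean-zero vectors again requires the same compatibility condition, so the two variants are not independent on that point). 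You correctly isolate the one genuine subtlety: as literally stated for arbitrary $w\in H^2(\Lambda)$ with $\Delta$ the distributional Laplacian, existence would fail, since summing the flux equations over all $K\in\Tau$ always gives zero on the left (each internal edge contributes twice with opposite signs) while the right-hand side is $-\int_\Lambda \Delta w\,dx$, which need not vanish. The statement is therefore only true under the paper's standing convention that $\Delta$ is the Neumann Laplacian, so that $\int_\Lambda \Delta w\,dx = \int_{\partial\Lambda}\nabla w\cdot\vec n\,dS = 0$; this is indeed how the proposition is applied later (to $v^n$, which satisfies the weak homogeneous Neumann condition). Your proof, with that reading made explicit, is complete.
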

\begin{proof}
See e.g. \cite{Flore21}, Definition 6 and Remark 2.
\end{proof}
\begin{defn}\label{Definition parabolic projection}
Let $\mathcal{T}$ be an admissible mesh and $w \in H^2(\Lambda)$. The elliptic projection of $w$ on $\Tau$ is given by
\begin{align*}
\tilde{w}:= \sum\limits_{K \in \mathcal{T}} \tilde{w}_K \mathds{1}_K,
\end{align*}
where $(\tilde{w}_K)_{K \in \mathcal{T}}$ are given by Proposition \ref{281123_01}. The centered projection of $w$ on $\Tau$ is given by
\begin{align*}
\hat{w}:= \sum\limits_{K \in \mathcal{T}} w(x_K) \mathds{1}_K.
\end{align*}
\end{defn}
\begin{lem}\label{Lemma 260923_01}
Let $\Tau_h$ be an admissible mesh with $\operatorname{size}(\mathcal{T}_h)=h$ and $w$ a random variable with values in $H^2(\Lambda)$.\\
Moreover, let $\tilde{w}_h$ be the elliptic projection and $\hat{w}_h$ the centered projection of $w$ a.s. in $\Omega$. Then there exists a constant $C_7>0$ only depending on $\Lambda$ and $\operatorname{reg}(\Tau_h)$ such that $\mathbb{P}$-a.s. in $\Omega$:
\begin{align*}
\Vert w - \tilde{w}_h \Vert_2^2 &\leq C_7 h^2 \Vert w \Vert_{H^2(\Lambda)}^2, \\
\Vert w - \hat{w}_h \Vert_2^2 &\leq C_7 h^2 \Vert w \Vert_{H^2(\Lambda)}^2, \\
|\hat{w}_h - \tilde{w}_h |_{1,h}^2 &\leq C_7 h^2 \Vert w \Vert_{H^2(\Lambda)}^2.
\end{align*}
\end{lem}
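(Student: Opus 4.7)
The plan is to prove the three bounds in the order (ii), (iii), (i): estimate (ii) is immediate, estimate (iii) is the central technical step performed by the classical TPFA consistency analysis, and estimate (i) follows from combining (ii) and (iii) with the discrete Poincar\'e inequality. Throughout, I work under the (implicit) assumption that $w$ satisfies the weak homogeneous Neumann boundary condition; this is the case for all random elements to which the lemma is applied later (namely $u(t)$ and the semi-implicit Euler iterates $v^n$), and without it the boundary-flux contributions appearing in the consistency analysis below do not cancel.

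Estimate (ii) is a direct application of Lemma \ref{Lemma 240124_01}: since $\hat{w}_h = \sum_{K \in \Tau_h} w(x_K) \mathds{1}_K$ is exactly of the form considered there with $y_K = x_K$, applying that lemma pathwise yields $\Vert w - \hat{w}_h\Vert_2^2 \leq C(2,\Lambda) h^2 \Vert w\Vert_{H^2(\Lambda)}^2$.

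For estimate (iii), I would set $e_K := \hat{w}_K - \tilde{w}_K = w(x_K) - \tilde{w}_K$. Subtracting the defining relation of $\tilde{w}_h$ from Proposition \ref{281123_01} and using the divergence theorem to rewrite $-\int_K \Delta w\, dx$ (invoking the weak Neumann boundary condition to drop contributions from exterior edges), I obtain
\begin{align*}
\sum_{\sigma \in \mathcal{E}_K^{\operatorname{int}}} \frac{m_\sigma}{\dkl}(e_K - e_L) = \sum_{\sigma \in \mathcal{E}_K^{\operatorname{int}}} R_{K,\sigma}, \quad R_{K,\sigma} := \frac{m_\sigma}{\dkl}(w(x_K) - w(x_L)) + \int_\sigma \nabla w \cdot \vec{n}_{K,\sigma}\, d\gamma,
\end{align*}
where the edge quantities $R_{K,\sigma}$ are the standard TPFA flux-consistency defects, antisymmetric across each $\sigma = K|L$. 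Multiplying by $e_K$, summing over $K \in \Tau_h$, and applying the discrete integration-by-parts formula (Remark \ref{DIBP}) on both sides yields $|\hat{w}_h - \tilde{w}_h|_{1,h}^2 = \sum_{\sigma \in \edgesint} R_{K,\sigma}(e_K - e_L)$, from which Cauchy-Schwarz gives
\begin{align*}
|\hat{w}_h - \tilde{w}_h|_{1,h}^2 \leq \sum_{\sigma \in \edgesint} \frac{\dkl}{m_\sigma} R_{K,\sigma}^2.
\end{align*}
The right-hand side is bounded by $C(\Lambda, \operatorname{reg}(\Tau_h))\, h^2 \Vert w\Vert_{H^2(\Lambda)}^2$ via the classical Taylor-expansion-on-the-diamond argument for TPFA consistency (see \cite[Section 9]{EGH00}), using the mesh-regularity bound $h/\dkl \leq \operatorname{reg}(\Tau_h)$ to control the geometric factors.

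Estimate (i) then follows by the triangle inequality $\Vert w - \tilde{w}_h\Vert_2 \leq \Vert w - \hat{w}_h\Vert_2 + \Vert \hat{w}_h - \tilde{w}_h\Vert_2$, with the first term handled by (ii); for the second, I would apply the discrete Poincar\'e inequality \eqref{291123_01} to $e_h := \hat{w}_h - \tilde{w}_h$ to obtain $\Vert e_h\Vert_2^2 \leq C_p |e_h|_{1,h}^2 + 2|\Lambda|^{-1}\bigl(\int_\Lambda e_h\, dx\bigr)^2$. The $|e_h|_{1,h}^2$-term is controlled by (iii), while the mean-value constraint $\sum_K m_K \tilde{w}_K = \int_\Lambda w\, dx$ from Proposition \ref{281123_01} reduces $\int_\Lambda e_h\, dx$ to $\int_\Lambda (\hat{w}_h - w)\, dx$, which by Cauchy-Schwarz and (ii) is $O(h \Vert w\Vert_{H^2(\Lambda)})$. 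The principal obstacle is the clean execution of the TPFA consistency estimate in (iii); once the diamond-based Taylor expansion and the correct geometric bookkeeping are in place, the remaining arguments are routine.
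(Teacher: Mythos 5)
The paper does not actually prove this lemma --- its ``proof'' consists of the single reference ``See \cite{SZ25}'' --- so there is no in-paper argument to compare yours against. Your proposal is the standard and correct route: (ii) is indeed immediate from Lemma \ref{Lemma 240124_01} (up to the harmless point that the centers satisfy $x_K\in\overline K$ rather than $x_K\in K$, which is absorbed by the embedding $H^2(\Lambda)\hookrightarrow\mathcal{C}(\overline\Lambda)$ for $d\le 3$); (iii) is the classical TPFA flux-consistency estimate via discrete integration by parts, Cauchy--Schwarz and the diamond-wise Taylor bound on $\sum_{\sigma}\frac{\dkl}{m_\sigma}R_{K,\sigma}^2$; and (i) follows from (ii), (iii), the discrete Poincar\'e inequality \eqref{291123_01} and the mean-value constraint in Proposition \ref{281123_01}. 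This is almost certainly the argument carried out in \cite{SZ25}. Your remark that the weak homogeneous Neumann boundary condition must be assumed on $w$ --- although it is missing from the lemma's statement --- is a genuine and correct observation: without it the exterior-edge fluxes $\int_\sigma \nabla w\cdot\vec n\,d\gamma$ do not vanish and the consistency identity for the elliptic projection acquires uncontrolled boundary residuals; fortunately every function to which the lemma is applied in Section \ref{Convergence rates} ($v^n$, $v^m-v^{m-1}$) does satisfy that condition.
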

Remark that by \eqref{291123_01} there exists a constant $C_8>0$ only depending on $\Lambda$ and $\operatorname{reg}(\Tau_h)$ such that $\mathbb{P}$-a.s. in $\Omega$
\begin{align*}
\Vert \hat{w}_h - \tilde{w}_h \Vert_2^2 \leq C_8 h^2 \Vert w \Vert_{H^2(\Lambda)}^2.
\end{align*}
Moreover, if \eqref{mesh_regularity} is satisfied, $C_7$ and $C_8$ may depend on $\chi$ and the dependence of $C_7$ and $C_8$ on $\operatorname{reg}(\Tau_h)$ can be omitted.
\begin{proof}
See \cite{SZ25}
\end{proof}
\begin{cor}\label{Corollary 260923_01}
Let $\tilde{v}_h^n$ be the elliptic projection of $v^n$, a.s. in $\Omega$. Then there exists a constant $C_9>0$ such that
\begin{align*}
\sup_{n\in \{1,...,N\}} \erww{\Vert v^n - \tilde{v}_h^n \Vert_2^2} \leq C_9 h^2.
\end{align*}
Especially, we have
\begin{align*}
\tau \sum\limits_{n=1}^{N} \erww{\Vert v^n - \tilde{v}_h^n \Vert_2^2} \leq C_9 T h^2.
\end{align*}
\end{cor}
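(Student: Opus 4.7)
The plan is to derive the corollary as a direct combination of the elliptic projection error estimate from Lemma \ref{Lemma 260923_01} with the a-priori bounds on the semi-implicit Euler iterates already proven earlier in this section. Since $v^n \in L^2(\Omega; H^2(\Lambda))$ by the remark following \eqref{ES} and satisfies the weak homogeneous Neumann boundary condition, Lemma \ref{Lemma 260923_01} applied pointwise in $\omega\in\Omega$ with $w = v^n$ yields
\begin{align*}
\Vert v^n - \tilde{v}_h^n \Vert_2^2 \leq C_7 h^2 \Vert v^n \Vert_{H^2(\Lambda)}^2 \quad \mathbb{P}\text{-a.s. in }\Omega,
\end{align*}
where $C_7>0$ depends only on $\Lambda$ and $\operatorname{reg}(\Tau_h)$ (and may be taken to depend on $\chi$ only, if \eqref{mesh_regularity} holds).

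Next, I would take expectation and apply Lemma \ref{Theorem 110124_01} to convert the $H^2$-norm into a combination of $L^2$-norm and Neumann-Laplacian $L^2$-norm, obtaining
\begin{align*}
\erww{\Vert v^n - \tilde{v}_h^n \Vert_2^2} \leq 12 C_7 h^2 \left( \erww{\Vert \Delta v^n \Vert_2^2} + \erww{\Vert v^n \Vert_2^2} \right).
\end{align*}
At this point, the two expectations on the right are bounded uniformly in $n \in \{1,\ldots,N\}$ thanks to Lemma \ref{Lemma 250923_01} (controlling $\sup_n \erww{\Vert v^n \Vert_2^2}$ by $C_1$) and Lemma \ref{Lemma 250923_03} (controlling $\sup_n \erww{\Vert \Delta v^n \Vert_2^2}$ by $C_3$), both valid uniformly for $N \geq \tilde{N}_0 = \max(N_1,N_2,N_3)$, which is ensured by the assumptions $i.)$--$iii.)$ on the approximating sequence $(v_N^0)_{N\in\N}$. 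Setting $C_9 := 12 C_7 (C_1 + C_3)$ then gives the first claim.

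For the second claim, I would simply sum the pointwise-in-$n$ bound over $n=1,\ldots,N$ and multiply by $\tau = T/N$, which yields
\begin{align*}
\tau \sum_{n=1}^N \erww{\Vert v^n - \tilde{v}_h^n \Vert_2^2} \leq \tau N \, C_9 h^2 = C_9 T h^2.
\end{align*}
There is no genuine obstacle here: the statement is essentially a combination of an approximation estimate that is already proven (Lemma \ref{Lemma 260923_01}) with uniform stability bounds for the Euler scheme that are already established (Lemmas \ref{Lemma 250923_01} and \ref{Lemma 250923_03}). The only mild subtlety is ensuring that the required $H^2$-regularity of $v^n$ and the integrability of $\Vert \Delta v^n \Vert_2^2$ are actually available, which is why the corollary is placed right after Lemma \ref{Lemma 250923_03} and the setup fixing the approximating initial data $(v_N^0)_{N\in\N}$.
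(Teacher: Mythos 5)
Your proposal is correct and follows exactly the argument the paper intends (the paper merely defers to \cite{SZ25}, but the proof there is precisely this combination of the projection estimate of Lemma \ref{Lemma 260923_01}, the $H^2$-norm bound of Lemma \ref{Theorem 110124_01}, and the uniform stability bounds of Lemmas \ref{Lemma 250923_01} and \ref{Lemma 250923_03}). Your attention to the required $H^2$-regularity of $v^n$, the weak Neumann boundary condition, and the uniformity in $N \geq \tilde{N}_0$ is exactly the right bookkeeping.
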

\begin{proof}
See \cite{SZ25}
\end{proof}

\begin{lem}\label{Lemma 270923_01}
    Let the assumptions \ref{A1} to \ref{A5} be satisfied with $f \equiv id$ and let $\hat{v}^n$ be the centered projection of $v^n$. Then there exists a constant $C_{10}>0$ not depending on $N,n,h$ and $N_0 \in \N$ with $N_0 \geq \hat{N}_0$ such that for all $N \geq N_0$
	\begin{align*}
		\sup_{n\in \{1,...,N\}} \mathbb{E} \Vert \hat{v}^n - u_h^n \Vert_2^2 \leq C_{10}(\tau + h^2 + \frac{h^2}{\tau}).
	\end{align*}
\end{lem}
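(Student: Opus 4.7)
The plan is to decompose $\hat v^n - u_h^n = (\hat v^n - \tilde v_h^n) + (\tilde v_h^n - u_h^n) =: \zeta_h^n + \eta_h^n$, where $\tilde v_h^n$ is the elliptic projection of $v^n$ from Definition \ref{Definition parabolic projection}. The first piece is handled directly by Lemma \ref{Lemma 260923_01} together with Lemma \ref{Lemma 250923_03}, yielding $\erww{\|\zeta_h^n\|_2^2} \leq C h^2$ uniformly in $n$. Hence the real task is to bound $\sup_n \erww{\|\eta_h^n\|_2^2}$ by $C(\tau + h^2 + \tfrac{h^2}{\tau})$.

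To obtain an error equation for $\eta_h^n$, I would integrate \eqref{ES} over each control volume $K$, use the defining property $-\int_K \Delta v^n\,dx = \sum_{\sigma\in \mathcal{E}_K^{int}} \frac{m_\sigma}{d_{K|L}}(\tilde v_K^n - \tilde v_L^n)$ of the elliptic projection, and rewrite the time-difference via $m_K(\bar v_K^n - \bar v_K^{n-1}) = m_K(\tilde v_K^n - \tilde v_K^{n-1}) - m_K[(\tilde v_K^n - \bar v_K^n) - (\tilde v_K^{n-1} - \bar v_K^{n-1})]$, where $\bar v_K^n$ is the cell mean of $v^n$. Subtracting \eqref{FVS} (recall $f \equiv \mathrm{id}$) and denoting by $\tilde v_\sigma^n$ the upwind value of $\tilde v_h^n$ with respect to $\mathbf{v}_{K,\sigma}^n$, one obtains
\begin{equation*}
m_K(\eta_K^n - \eta_K^{n-1}) + \tau\!\!\!\sum_{\sigma \in \mathcal{E}_K^{int}}\!\!\frac{m_\sigma}{d_{K|L}}(\eta_K^n - \eta_L^n) + \tau\!\!\!\sum_{\sigma \in \mathcal{E}_K^{int}}\!\!m_\sigma \mathbf{v}_{K,\sigma}^n \eta_\sigma^n = R_K^n,
\end{equation*}
where $\eta_\sigma^n$ is the upwind value of $\eta_h^n$ and $R_K^n$ bundles the time-projection residual $m_K[(\tilde v_K^n - \bar v_K^n) - (\tilde v_K^{n-1} - \bar v_K^{n-1})]$, the upwind consistency residual $\tau[\sum_\sigma m_\sigma \mathbf{v}_{K,\sigma}^n \tilde v_\sigma^n - \int_K \operatorname{div}(\mathbf{v}^n v^n)\,dx]$, the noise residual $\int_K [g(v^{n-1}) - g(u_K^{n-1})]\,\Delta_n W\,dx$, and the source residual $\tau\int_K [\beta(v^n) - \beta(u_K^n)]\,dx$.

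Next I would test with $\eta_K^n$, sum over $K$, and take expectation. Identity \eqref{identity}, Remark \ref{DIBP}, and the upwind-dissipation computation $\sum_K\sum_\sigma m_\sigma \mathbf{v}_{K,\sigma}^n \eta_\sigma^n \eta_K^n = \tfrac{1}{2}\sum_\sigma m_\sigma |\mathbf{v}_{K,\sigma}^n|(\eta_K^n - \eta_L^n)^2 \geq 0$ (which relies crucially on $\operatorname{div}(\mathbf{v})=0$ and $\mathbf{v}\cdot\vec n = 0$ from \ref{A5}) deliver the lower bound $\tfrac{1}{2}(\|\eta_h^n\|_2^2 - \|\eta_h^{n-1}\|_2^2 + \|\eta_h^n - \eta_h^{n-1}\|_2^2) + \tau |\eta_h^n|_{1,h}^2$ on the left-hand side. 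The right-hand side is estimated term by term: the time-projection residual satisfies $\|(\tilde v_h^n - \bar v_h^n) - (\tilde v_h^{n-1} - \bar v_h^{n-1})\|_2 \leq Ch\|v^n - v^{n-1}\|_{H^2}$ by Lemma \ref{Lemma 260923_01}, and Young's inequality with weight $\tau$ produces the critical $\tfrac{Ch^2}{2\tau}\|v^n - v^{n-1}\|_{H^2}^2$ whose summation is finite in expectation thanks to Lemma \ref{Lemma 250923_03} and Corollary \ref{Corollary 241125_01}, contributing the $\tfrac{h^2}{\tau}$ term; the upwind consistency residual is rewritten by summation by parts so that Young's inequality absorbs a fraction of $\tau|\eta_h^n|_{1,h}^2$ on the left and leaves an $O(h^2)$ contribution controlled by $\erww{\|v^n\|_{H^2}^2}$; the noise residual is treated by splitting $\eta_K^n = \eta_K^{n-1} + (\eta_K^n - \eta_K^{n-1})$, where the $\eta_K^{n-1}$ part has zero expectation by $\mathcal{F}_{t_{n-1}}$-measurability and $\erww{\Delta_n W}=0$, and the increment part is handled via It\^o's isometry, the Lipschitz continuity of $g$ and the decomposition $v^{n-1} - u_h^{n-1} = (v^{n-1} - \tilde v_h^{n-1}) + \eta_h^{n-1}$, with the resulting $\|\eta_h^n - \eta_h^{n-1}\|_2^2$ absorbed into the LHS; the $\beta$ residual is routine.

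Summing over $n=1,\dots,n_0$, using $\erww{\|\eta_h^0\|_2^2} \leq C(\tau + h^2)$ from assumption $(iii.)$ and Lemma \ref{Lemma 260923_01}, and applying the discrete Gronwall inequality (Lemma \ref{Gronwall}) to the recursion
\begin{equation*}
\erww{\|\eta_h^{n_0}\|_2^2} \leq C\Big(\tau + h^2 + \tfrac{h^2}{\tau}\Big) + C\tau\sum_{n=1}^{n_0}\erww{\|\eta_h^{n-1}\|_2^2}
\end{equation*}
yields the claim uniformly in $n_0$ for all $N\geq N_0$, with $N_0 \geq \hat N_0$ chosen so that the Gronwall coefficient is small enough. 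The main obstacle I foresee is the careful rewriting of the upwind consistency residual: after summing over $K$, the per-edge defect $\int_\sigma \mathbf{v}^n v^n\cdot\vec{n}_{K,\sigma}\,d\gamma - m_\sigma \mathbf{v}_{K,\sigma}^n \tilde v_\sigma^n$ must be massaged into a product of $Ch\|v^n\|_{H^2}$ with an edge-difference of $\eta_h^n$ so that the parabolic dissipation can absorb it; this is precisely where $f \equiv \mathrm{id}$ together with the divergence-freeness of $\mathbf{v}$ enters decisively, in line with the comment in the introduction that the fully nonlinear case could not be closed.
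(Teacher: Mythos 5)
Your proposal is correct and follows essentially the same route as the paper's proof: an energy argument built on the elliptic projection for the diffusion term, the monotonicity of the upwind flux (using $\operatorname{div}(\mathbf{v})=0$ and $\mathbf{v}\cdot\vec{n}=0$) for the convection term, a martingale/increment splitting for the noise, the bound $\tfrac{1}{\tau}\Vert(\text{projection error})^n-(\text{projection error})^{n-1}\Vert_2^2\le C\tfrac{h^2}{\tau}\Vert v^n-v^{n-1}\Vert_{H^2}^2$ combined with Lemma \ref{Lemma 250923_01} and Lemma \ref{Lemma 250923_03} as the source of the $h^2/\tau$ term, and a final discrete Gronwall step. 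The only difference is bookkeeping: you test the error equation with the elliptic-projection error $\tilde v_h^n-u_h^n$ and add $\Vert\hat v^n-\tilde v_h^n\Vert_2\le Ch\Vert v^n\Vert_{H^2}$ at the end, whereas the paper tests directly with the centered-projection error $\hat v_h^n-u_h^n$ and carries the cross terms $\hat e_h^n$ through the estimates, but both reduce to the same lemmas and yield the same rate.
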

\begin{proof}
	We divide \eqref{FVS} by $m_K$ and subtract this equality from \eqref{ES} and get
	\begin{align*}
		&\bigg( v^n - u_K^n - (v^{n-1} - u_K^{n-1}) \bigg) - \tau \left( \Delta v^n + \frac{1}{m_K} \sum\limits_{\sigma \in \mathcal{E}_K^{int}} \frac{m_{\sigma}}{d_{K|L}} (u_K^n - u_L^n) \right) \\
		&+ \tau \left( \operatorname{div}( \mathbf{v}^n v^n) - \frac{1}{m_K} \sum\limits_{\sigma \in \mathcal{E}_K^{int}} m_{\sigma} \mathbf{v}_{K, \sigma}^n u_{\sigma}^n \right)\\
        &= \left( g(v^{n-1}) - g(u_K^{n-1}) \right) \Delta_n W + \tau \left(\beta(v^n) - \beta(u_K^n) \right)~~~\text{on}~K.
	\end{align*}
	Multiplying with $v_K^n - u_K^n$, where $(v_K^n)_{K \in \Tau}$ is the centered projection of $v^n$, i.e., $v_K^n = v(t_n, x_K)$, and integrating over $K$ yields
	\begin{align*}
		I_K + II_K + III_K + IV_K = V_K + VI_K,
	\end{align*}
	where
	\begin{align*}
		I_K &= \int_K \left(v^n - v_K^n - (v^{n-1} - v_K^{n-1})\, \right) (v_K^n - u_K^n) \, dx, \\
		II_K &= \frac{1}{2} \left(\Vert v_K^n - u_K^n \Vert_{L^2(K)}^2 - \Vert v_K^{n-1} - u_K^{n-1} \Vert_{L^2(K)}^2 + \Vert v_K^n - u_K^n - (v_K^{n-1} - u_K^{n-1}) \Vert_{L^2(K)}^2 \right), \\
		III_K &= -\tau\bigg( \int_K \Delta v^n \, dx + \sum\limits_{\sigma \in \mathcal{E}_K^{int}} \frac{m_{\sigma}}{d_{K|L}} (u_K^n - u_L^n)\bigg) (v_K^n - u_K^n), \\
        IV_K &= \int_{t_{n-1}}^{t_n} \sum\limits_{\sigma \in \mathcal{E}_K^{int}} \int_{\sigma} \left(\mathbf{v}^n v^n - \mathbf{v}(t,x) u_{\sigma}^n \right) \cdot \vec{n}_{K, \sigma} \, d\gamma(x) \, (v_K^n - u_K^n),\\
		V_K &= \int_K (g(v^{n-1}) - g(u_K^{n-1})) (v_K^n - u_K^n)  \Delta_n W \, dx, \\
        VI_K &= \tau \int_K (\beta(v^n) - \beta(u_K^n))(v_K^n - u_K^n) \, dx .
	\end{align*}
	Summing over $K \in \mathcal{T}$, using the abbreviations
    \begin{align*}
        \tilde{e}_K^n &= v_K^n - u_K^n, ~\tilde{e}_h^n = \sum\limits_{K \in \Tau} \tilde{e}_K^n \mathds{1}_K, \\
        \hat{e}_K^n &= v_K^n - \tilde{v}_K^n, ~\hat{e}_h^n = \sum\limits_{K \in \Tau} \hat{e}_K^n \mathds{1}_K, \\
        \dot{e}_K^n &= \tilde{v}_K^n - u_K^n, ~\dot{e}_h^n = \tilde{v}_h^n - u_h^n, \\
        \overline{e}_h^n &= v^n - \hat{v}_h^n,
    \end{align*}
    where $\hat{v}_h^n = \sum\limits_{K \in \Tau} v_K^n \mathds{1}_K$ and the definition of the elliptic projection it yields
	\begin{align}\label{eq 260923_01}
		I + II + III + IV = V + VI,
	\end{align}
	where 
	\begin{align*}
		I &= \int_{\Lambda} (\overline{e}_h^n - \overline{e}_h^{n-1}) \tilde{e}_h^n= (\overline{e}_h^n - \overline{e}_h^{n-1}, \tilde{e}_h^n)_2, \\
		II &= \frac{1}{2} \left(\Vert \tilde{e}_h^n \Vert_2^2 - \Vert \tilde{e}_h^{n-1} \Vert_2^2 + \Vert \tilde{e}_h^n - \tilde{e}_h^{n-1}\Vert_2^2 \right), \\
		III &= \tau \sum\limits_{K \in \Tau}\sum\limits_{\sigma \in \mathcal{E}_K^{int}} \frac{m_{\sigma}}{d_{K|L}} \left((\tilde{v}_K^n - \tilde{v}_L^n) - (u_K^n - u_L^n)\right)(v_K^n - u_K^n)\\
        & = \tau \sum\limits_{K \in \Tau}\sum\limits_{\sigma \in \mathcal{E}_K^{int}} \frac{m_{\sigma}}{d_{K|L}} \left(\dot{e}_K^n - \dot{e}_L^n \right)\tilde{e}_K^n \\
        & = \tau \sum\limits_{K \in \Tau}\sum\limits_{\sigma \in \mathcal{E}_K^{int}} \frac{m_{\sigma}}{d_{K|L}} \left(\dot{e}_K^n - \dot{e}_L^n \right)\dot{e}_K^n + \tau \sum\limits_{K \in \Tau}\sum\limits_{\sigma \in \mathcal{E}_K^{int}} \frac{m_{\sigma}}{d_{K|L}} \left(\dot{e}_K^n - \dot{e}_L^n \right)\hat{e}_K^n \\
        &= III_1 + III_2, \\
        IV &= \int_{t_{n-1}}^{t_n} \sum\limits_{K \in \Tau} \sum\limits_{\sigma \in \mathcal{E}_K^{int}} \int_{\sigma} \left(\mathbf{v}^n  - \mathbf{v}(t,x) \right)v^n \cdot \vec{n}_{K, \sigma} \, d\gamma(x) \, \tilde{e}_K^n\\
        &= \int_{t_{n-1}}^{t_n} \sum\limits_{K \in \Tau} \sum\limits_{\sigma \in \mathcal{E}_K^{int}} \int_{\sigma} \mathbf{v}(t,x) (v^n - u_{\sigma}^n)  \cdot \vec{n}_{K, \sigma} \, d\gamma(x) \, \tilde{e}_K^n\\
        &= IV_1 + IV_2, \\
		V &= \int_{\Lambda} (g(v^{n-1}) - g(\hat{v}_h^{n-1})) \tilde{e}_h^n  \Delta_n W \, dx + \int_{\Lambda} (g(\hat{v}_h^{n-1}) - g(u_h^{n-1}))\tilde{e}_h^n  \Delta_n W \, dx, \\
        VI &= \tau \int_{\Lambda} (\beta(v^n) - \beta(\hat{v}^n)) \tilde{e}_h^n + (\beta(\hat{v}^n) - \beta(u_h^n)) \tilde{e}_h^n \, dx.
	\end{align*}
    Thanks to Remark \ref{DIBP}, we have
    \begin{align}\label{eq 011225_01}
        \begin{aligned}
            III_1 = \tau \sum\limits_{\sigma \in \mathcal{E}^{int}} \frac{m_{\sigma}}{d_{K|L}} |\dot{e}_K^n - \dot{e}_L^n|^2 = \tau |\dot{e}_h^n|_{1,h}^2.
        \end{aligned}
    \end{align}
    Moreover, thanks to Lemma \ref{Lemma 260923_01} and Young's inequality, we have
    \begin{align}\label{eq 011225_02}
        \begin{aligned}
            |III_2| &= \tau \left|\sum\limits_{\sigma \in \mathcal{E}^{int}} \frac{m_{\sigma}}{d_{K|L}} \left(\dot{e}_K^n - \dot{e}_L^n\right)\left(\hat{e}_K^n - \hat{e}_L^n\right)\right| \\
            &\leq \tau \left(\sum\limits_{\sigma \in \mathcal{E}^{int}} \frac{m_{\sigma}}{d_{K|L}} \left(\dot{e}_K^n - \dot{e}_L^n\right)^2 \right)^{\frac{1}{2}} \left(\sum\limits_{\sigma \in \mathcal{E}^{int}} \frac{m_{\sigma}}{d_{K|L}} \left(\hat{e}_K^n - \hat{e}_L^n\right)^2 \right)^{\frac{1}{2}} \\
            &= \tau |\dot{e}_h^n|_{1,h}|\hat{e}_h^n|_{1,h} \leq \frac{\tau}{2} |\dot{e}_h^n|_{1,h}^2 + \frac{\tau}{2} |\hat{e}_h^n|_{1,h} ^2 \leq \frac{\tau}{2} |\dot{e}_h^n|_{1,h}^2 + C_7\frac{\tau}{2}h^2 \Vert v^n \Vert_{H^2}^2 \\
            &\leq \frac{\tau}{2} |\dot{e}_h^n|_{1,h}^2 + 6C_7\tau h^2 \left( \Vert \Delta v^n \Vert_{L^2}^2 + \Vert v^n \Vert_{L^2}^2\right).
        \end{aligned}
    \end{align}
    Using the Lemma \ref{Lemma 260923_01} for $VI$, it yields
	\begin{align}\label{eq 011225_04}
        \begin{aligned}
            |VI| &\leq \frac{\tau}{2}\Vert \beta ' \Vert_{\infty}^2 \left(\Vert \overline{e}_h^n\Vert_2^2 + \Vert \tilde{e}_h^n\Vert_2^2 + 2\Vert \tilde{e}_h^n\Vert_2^2 \right) \\
            &\leq \frac{3\tau}{2} \Vert \beta ' \Vert_{\infty}^2\Vert \tilde{e}_h^n\Vert_2^2 + \frac{\tau}{2}\Vert \beta ' \Vert_{\infty}^2 C_7h^2 \Vert v^n \Vert_{H^2}^2 \\
            &\leq \frac{3\tau}{2} \Vert \beta ' \Vert_{\infty}^2\Vert \tilde{e}_h^n\Vert_2^2 + 6C_7\tau h^2\Vert \beta ' \Vert_{\infty}^2 \left(\Vert \Delta v^n \Vert_{L^2}^2 + \Vert v^n \Vert_{L^2}^2\right).
        \end{aligned}
	\end{align}
    For the term $IV_1$ we have
    \begin{align}\label{eq 031225_01}
        \begin{aligned}
            |IV_1| &= \left |\int_{t_{n-1}}^{t_n} \sum\limits_{K \in \Tau} \sum\limits_{\sigma \in \mathcal{E}_K^{int}} \int_{\sigma} \left(\mathbf{v}^n  - \mathbf{v}(t,x) \right)v^n \cdot \vec{n}_{K, \sigma} \, d\gamma(x) \, \tilde{e}_K^n \right|\\
            &= \left|  \int_{t_{n-1}}^{t_n} \sum\limits_{K \in \Tau} \int_K  (\mathbf{v}^n  - \mathbf{v}(t,x) ) \nabla v^n \tilde{e}_K^n \, dx \, dt\right| \\
            &\leq \Vert D \mathbf{v} \Vert_{\infty} \tau^2 \sum\limits_{K \in \Tau} \int_K | \nabla v^n| |\tilde{e}_K^n| \, dx \\
            &\leq \Vert D \mathbf{v} \Vert_{\infty} \tau^2 \int_{\Lambda} |\nabla v^n ||\tilde{e}_h^n| \, dx \\
            &\leq \frac{\tau^2}{2} \Vert D \mathbf{v} \Vert_{\infty}  ( \Vert \nabla v^n \Vert_{L^2(\Lambda)}^2 + \Vert \tilde{e}_h^n \Vert_{L^2(\Lambda)}^2).
        \end{aligned}
    \end{align}
    Furthermore, we have $IV_2 = IV_{2,1} + IV_{2,2}$, where
    \begin{align*}
        IV_{2,1} &= \int_{t_{n-1}}^{t_n} \sum\limits_{K \in \Tau} \sum\limits_{\sigma \in \mathcal{E}_K^{int}} \int_{\sigma} \mathbf{v}(t,x) (v^n - v_{\sigma}^n)  \cdot \vec{n}_{K, \sigma} \, d\gamma(x) \, \tilde{e}_K^n, \\
        IV_{2,2} &= \int_{t_{n-1}}^{t_n} \sum\limits_{K \in \Tau} \sum\limits_{\sigma \in \mathcal{E}_K^{int}} \int_{\sigma} \mathbf{v}(t,x) (v_{\sigma}^n - u_{\sigma}^n)  \cdot \vec{n}_{K, \sigma} \, d\gamma(x) \, \tilde{e}_K^n
    \end{align*}
    and 
    \begin{align*}
        v_{\sigma}^n = \begin{cases}
            &v_K^n; ~\mathbf{v}_{k, \sigma}^n \geq 0, \\
            &v_L^n; ~\mathbf{v}_{k, \sigma}^n < 0.
        \end{cases}
    \end{align*}
    Now, we obtain
    \begin{align}\label{eq 031225_02}
        \begin{aligned}
            IV_{2,2} &= \tau \sum\limits_{K \in \Tau} \sum\limits_{\sigma \in \mathcal{E}_K^{int}} m_{\sigma} \mathbf{v}_{K, \sigma}^n (v_{\sigma}^n - u_{\sigma}^n) \tilde{e}_K^n \\
            &= \tau \sum\limits_{K \in \Tau} \sum\limits_{\sigma \in \mathcal{E}_K^{int}} m_{\sigma} (\mathbf{v}_{K, \sigma}^n)^- [v_K^n -u_K^n - (v_L^n - u_L^n)] \tilde{e}_K^n \\
            &= \tau \sum\limits_{K \in \Tau} \sum\limits_{\sigma \in \mathcal{E}_K^{int}} m_{\sigma} (\mathbf{v}_{K, \sigma}^n)^- \frac{1}{2} [|\tilde{e}_K^n|^2 - |\tilde{e}_L^n|^2 + |\tilde{e}_K^n - \tilde{e}_L^n|^2] \\
            &\geq \frac{\tau}{2} \sum\limits_{K \in \Tau} \sum\limits_{\sigma \in \mathcal{E}_K^{int}} m_{\sigma} (\mathbf{v}_{K, \sigma}^n)^- [|\tilde{e}_K^n|^2 - |\tilde{e}_L^n|^2] \\
            &= \frac{\tau}{2} \sum\limits_{K \in \Tau} \sum\limits_{\sigma \in \mathcal{E}_K^{int}} m_{\sigma} \mathbf{v}_{K, \sigma}^n  |v_{\sigma}^n - u_{\sigma}^n|^2 =0
        \end{aligned}
    \end{align}
    and
    \begin{align*}
        IV_{2,1} &= \int_{t_{n-1}}^{t_n} \sum\limits_{K \in \Tau} \sum\limits_{\sigma \in \mathcal{E}_K^{int}} \int_{\sigma} \mathbf{v}(t,x) (v^n - v_{\sigma}^n)  \cdot \vec{n}_{K, \sigma} \, d\gamma(x) \, \tilde{e}_K^n, \\
        &= \int_{t_{n-1}}^{t_n} \sum\limits_{\sigma \in \mathcal{E}^{int}} \int_{\sigma} \mathbf{v}(t,x) (v^n - v_{\sigma}^n)  \cdot \vec{n}_{K, \sigma} \, d\gamma(x) \, [\tilde{e}_K^n - \tilde{e}_L^n].
    \end{align*}
    Therefore, we may conclude
    \begin{align*}
        |IV_{2,1}| &\leq \tau \Vert \mathbf{v} \Vert_{\infty} \sum\limits_{\sigma \in \mathcal{E}^{int}} \int_{\sigma} |v^n - v_{\sigma}^n| |\tilde{e}_K^n - \tilde{e}_L^n|  \, d\gamma(x) \\
        &\leq \tau \Vert \mathbf{v} \Vert_{\infty} \sum\limits_{\sigma \in \mathcal{E}^{int}} \left(\int_{\sigma} |v^n - v_{\sigma}^n|^2 \, d\gamma(x) \right)^{\frac{1}{2}} \left( \int_{\sigma} |\tilde{e}_K^n - \tilde{e}_L^n|^2  \, d\gamma(x) \right)^{\frac{1}{2}} \\
         &\leq 2\tau \Vert \mathbf{v} \Vert_{\infty}^2 \sum\limits_{\sigma \in \mathcal{E}^{int}} d_{K|L}\int_{\sigma} |v^n - v_{\sigma}^n|^2 \, d\gamma(x) + \frac{\tau}{8} \sum\limits_{\sigma \in \mathcal{E}^{int}}\frac{m_{\sigma}}{d_{K|L}}|\tilde{e}_K^n - \tilde{e}_L^n|^2 \\
        &\leq 2\tau h^2 \Vert \mathbf{v} \Vert_{\infty}^2 \Vert \nabla v^n \Vert_{\infty}^2\sum\limits_{\sigma \in \mathcal{E}^{int}} m_{\sigma} d_{K|L} + \frac{\tau}{8} \sum\limits_{\sigma \in \mathcal{E}^{int}} \frac{m_{\sigma}}{d_{K|L}}|\tilde{e}_K^n - \tilde{e}_L^n|^2 \\
        &= 2\tau h^2 \Vert \mathbf{v} \Vert_{\infty}^2 \Vert \nabla v^n \Vert_{\infty}^2 |\Lambda| + \frac{\tau}{8} |\tilde{e}_h^n|_{1,h}^2.
    \end{align*}
    Since $H^2(\Lambda) \hookrightarrow \mathcal{C}(\overline{\Lambda})$ and $\nabla v^n$ satisfies the homogeneous Neumann boundary condition, we get
    \begin{align*}
        \Vert \nabla v^n \Vert_{\infty}^2 &\leq C \Vert \nabla v^n \Vert_{H^2}^2 \leq 12 C \left( \Vert \Delta \nabla v^n \Vert_{L^2(\Lambda)}^2 + \Vert \nabla v^n \Vert_{L  ^2(\Lambda)}^2 \right) \\
        &= 12 C \left( \Vert \nabla \Delta v^n \Vert_{L^2(\Lambda)}^2 + \Vert \nabla v^n \Vert_{L  ^2(\Lambda)}^2 \right) 
    \end{align*}
    for a constant $C>0$, we obtain 
    \begin{align}\label{eq 031225_03}
        \begin{aligned}
            |IV_{2,1}| &\leq 2\tau h^2 \Vert \mathbf{v} \Vert_{\infty}^2 \Vert \nabla v^n \Vert_{\infty}^2 |\Lambda| + \frac{\tau}{8} |\tilde{e}_h^n|_{1,h}^2\\
            &\leq 24C\tau h^2 \Vert \mathbf{v} \Vert_{\infty}^2  |\Lambda| \left(\Vert \Delta \nabla v^n \Vert_{L^2}^2 + \Vert \nabla v^n \Vert_{L^2}^2\right) + \frac{\tau}{8} |\tilde{e}_h^n|_{1,h}^2.
        \end{aligned}
    \end{align}
    Now, using estimates \eqref{eq 011225_01} to \eqref{eq 031225_03} in \eqref{eq 260923_01} we obtain $\mathbb{P}$-a.s.
    \begin{align*}
        &(\overline{e}_h^n - \overline{e}_h^{n-1}, \tilde{e}_h^n)_2 + \frac{1}{2} \Vert \tilde{e}_h^n \Vert_2^2 - \frac{1}{2}\Vert \tilde{e}_h^{n-1} \Vert_2^2 + \frac{1}{2}  \Vert \tilde{e}_h^n - \tilde{e}_h^{n-1}\Vert_2^2 + \tau |\dot{e}_h^n|_{1,h}^2 \\
        &\leq \frac{\tau}{2} |\dot{e}_h^n|_{1,h}^2 + 6C_7\tau h^2 \left( \Vert \Delta v^n \Vert_{L^2}^2 + \Vert v^n \Vert_{L^2}^2\right) + \frac{3\tau}{2} \Vert \beta ' \Vert_{\infty}^2\Vert \tilde{e}_h^n\Vert_2^2 \\
        & \quad + 6C_7\tau h^2\Vert \beta ' \Vert_{\infty}^2 \left(\Vert \Delta v^n \Vert_{L^2}^2 + \Vert v^n \Vert_{L^2}^2\right) + \frac{\tau^2}{2} \Vert D \mathbf{v} \Vert_{\infty} \left(\Vert \nabla v^n \Vert_{L^2}^2 + \Vert \tilde{e}_h^n \Vert_{L^2}^2\right)\\
        & \quad+ 24C\tau h^2 \Vert \mathbf{v} \Vert_{\infty}^2  |\Lambda| \left(\Vert \Delta \nabla v^n \Vert_{L^2}^2 + \Vert \nabla v^n \Vert_{L^2}^2\right) + \frac{\tau}{8} |\tilde{e}_h^n|_{1,h}^2\\
        & \quad+ \int_{\Lambda} (g(v^{n-1}) - g(\hat{v}_h^{n-1})) \tilde{e}_h^n  \Delta_n W \, dx + \int_{\Lambda} (g(\hat{v}_h^{n-1}) - g(u_h^{n-1}))\tilde{e}_h^n  \Delta_n W \, dx.
    \end{align*}
    We use that $\tilde{e}_h^n = \dot{e}_h^n + \hat{e}_h^n$ and therefore 
    \begin{align*}
        |\tilde{e}_h^n|_{1,h}^2 &\leq 2|\dot{e}_h^n|_{1,h}^2 + 2|\hat{e}_h^n|_{1,h}^2 \leq 2|\dot{e}_h^n|_{1,h}^2 + 2C_7h^2 \Vert v^n \Vert_{H^2}^2 \\
        &\leq 2|\dot{e}_h^n|_{1,h}^2 + 24C_7h^2 \left( \Vert \Delta v^n \Vert_{L^2}^2 + \Vert v^n \Vert_{L^2}^2 \right)
    \end{align*}
    to conclude that there exist constants $C_{11}, C_{12}, C_{13}>0$ such that
    \begin{align*}
        &(\overline{e}_h^n - \overline{e}_h^{n-1}, \tilde{e}_h^n)_2 + \frac{1}{2} \Vert \tilde{e}_h^n \Vert_2^2 - \frac{1}{2}\Vert \tilde{e}_h^{n-1} \Vert_2^2 + \frac{1}{2}  \Vert \tilde{e}_h^n - \tilde{e}_h^{n-1}\Vert_2^2 + \frac{\tau}{4} |\dot{e}_h^n|_{1,h}^2 \\
        &\leq C_{11}\tau h^2\left(\Vert \Delta v^n \Vert_{L^2}^2 + \Vert v^n \Vert_{L^2}^2 + \Vert \Delta \nabla v^n \Vert_{L^2}^2 + \Vert \nabla v^n \Vert_{L^2}^2\right) + C_{12} \tau \Vert \tilde{e}_h^n\Vert_2^2 \\
        & \quad+ \int_{\Lambda} (g(v^{n-1}) - g(\hat{v}_h^{n-1})) \tilde{e}_h^n  \Delta_n W \, dx + \int_{\Lambda} (g(\hat{v}_h^{n-1}) - g(u_h^{n-1}))\tilde{e}_h^n  \Delta_n W \, dx.
    \end{align*}
    Replacing $n$ by $m$ and summing over $m=1,...,N$ yields
    \begin{align}\label{eq 051225_01}
            &\sum\limits_{m=1}^n(\overline{e}_h^m - \overline{e}_h^{m-1}, \tilde{e}_h^m)_2 + \frac{1}{2} \Vert \tilde{e}_h^n \Vert_2^2 + \frac{1}{2}  \sum\limits_{m=1}^n\Vert \tilde{e}_h^m - \tilde{e}_h^{m-1}\Vert_2^2 + \frac{\tau}{4} \sum\limits_{m=1}^n|\dot{e}_h^m|_{1,h}^2 \notag \\
            &\leq \frac{1}{2}\Vert \tilde{e}_h^0 \Vert_2^2 + C_{11}\tau h^2 \sum\limits_{m=1}^n\left(\Vert \Delta v^m \Vert_{L^2}^2 + \Vert v^m \Vert_{L^2}^2 + \Vert \Delta \nabla v^m \Vert_{L^2}^2 + \Vert \nabla v^m \Vert_{L^2}^2\right)  \notag \\
            & \quad + C_{12} \tau \sum\limits_{m=1}^n\Vert \tilde{e}_h^m\Vert_2^2 + \sum\limits_{m=1}^n\int_{\Lambda} (g(v^{m-1}) - g(\hat{v}_h^{m-1})) \tilde{e}_h^m  \Delta_m W \, dx \notag \\
            &\quad + \sum\limits_{m=1}^n\int_{\Lambda} (g(\hat{v}_h^{m-1}) - g(u_h^{m-1}))\tilde{e}_h^m  \Delta_m W \, dx.
    \end{align}  
    By using Lemma \ref{Lemma 260923_01} and \ref{Theorem 110124_01}, we get
    \begin{align*}
        |(\overline{e}_h^m - \overline{e}_h^{m-1}, \tilde{e}_h^m)_2| &\leq \frac{1}{2\tau} \Vert \overline{e}_h^m - \overline{e}_h^{m-1}\Vert_{L^2}^2 + \frac{\tau}{2} \Vert \tilde{e}_h^m\Vert_{L^2}^2 \\
        &\leq \frac{C_7}{2}\frac{h^2}{\tau} \Vert v^m - v^{m-1} \Vert_{H^2}^2 + \frac{\tau}{2} \Vert \tilde{e}_h^m\Vert_{L^2}^2 \\
        &\leq 6C_7 \frac{h^2}{\tau} \left(\Vert v^m - v^{m-1} \Vert_{L^2}^2 + \Vert \Delta v^m - \Delta v^{m-1} \Vert_{L^2}^2 \right) + \frac{\tau}{2} \Vert \tilde{e}_h^m\Vert_{L^2}^2, 
    \end{align*}
    and we may deduce from \eqref{eq 051225_01} that
    \begin{align}\label{eq 051225_02}
        \begin{aligned}
            &\left(\frac{1}{2} - (C_{12} + \frac{1}{2}) \tau\right) \Vert \tilde{e}_h^n \Vert_2^2 + \frac{1}{2}  \sum\limits_{m=1}^n\Vert \tilde{e}_h^m - \tilde{e}_h^{m-1}\Vert_2^2 + \frac{\tau}{4} \sum\limits_{m=1}^n|\dot{e}_h^m|_{1,h}^2 \\
            &\leq \frac{1}{2}\Vert \tilde{e}_h^0 \Vert_2^2 + C_{11}\tau h^2 \sum\limits_{m=1}^n\left(\Vert \Delta v^m \Vert_{L^2}^2 + \Vert v^m \Vert_{L^2}^2 + \Vert \Delta \nabla v^m \Vert_{L^2}^2 + \Vert \nabla v^m \Vert_{L^2}^2\right) \\
            & \quad+ C_{12} \tau \sum\limits_{m=1}^n\Vert \tilde{e}_h^{m-1}\Vert_2^2 + 6C_7 \frac{h^2}{\tau} \sum\limits_{m=1}^n\left(\Vert v^m - v^{m-1} \Vert_{L^2}^2 + \Vert \Delta v^m - \Delta v^{m-1} \Vert_{L^2}^2 \right)\\
            & \quad+ \sum\limits_{m=1}^n\int_{\Lambda} (g(v^{m-1}) - g(\hat{v}_h^{m-1})) \tilde{e}_h^m  \Delta_m W \, dx + \sum\limits_{m=1}^n\int_{\Lambda} (g(\hat{v}_h^{m-1}) - g(u_h^{m-1}))\tilde{e}_h^m  \Delta_m W \, dx.
        \end{aligned}
    \end{align}
    Taking the expectation in $V$ and applying Young's inequality yields
	\begin{align}\label{eq 011225_03}
        \begin{aligned}
            \erww{|V|} &\leq 2\tau \erww{\Vert g(v^{n-1}) - g(\hat{v}_h^{n-1})\Vert_2^2} + \frac{1}{8} \erww{\Vert \tilde{e}_h^n - \tilde{e}_h^{n-1}\Vert_2^2} \\
		&\quad + 2\tau \erww{\Vert g(\hat{v}_h^{n-1}) - g(u_h^{n-1})\Vert_2^2} + \frac{1}{8} \erww{\Vert \tilde{e}_h^n - \tilde{e}_h^{n-1}\Vert_2^2}.
        \end{aligned}
	\end{align}
    Hence, taking expectation in \eqref{eq 051225_02}, using Lemma \ref{Lemma 250923_01}, \ref{Lemma 250923_03} and Corollary \ref{Corollary 241125_01} and the assumptions on $g$ yields the existence of a constant $C>0$ such that
	\begin{align*}
		&\left(\frac{1}{2} - (C_{12} + \frac{1}{2}) \tau\right) \erww{\Vert \tilde{e}_h^n \Vert_2^2} + \frac{1}{4}  \sum\limits_{m=1}^n \erww{\Vert \tilde{e}_h^m - \tilde{e}_h^{m-1}\Vert_2^2} + \frac{\tau}{4} \sum\limits_{m=1}^n \erww{|\dot{e}_h^m|_{1,h}^2} \\
        &\leq \frac{1}{2} \erww{\Vert \tilde{e}_h^0 \Vert_2^2} + C_{11}\tau h^2 \sum\limits_{m=1}^n \erww{\Vert \Delta v^m \Vert_{L^2}^2 + \Vert v^m \Vert_{L^2}^2 + \Vert \Delta \nabla v^m \Vert_{L^2}^2 + \Vert \nabla v^m \Vert_{L^2}^2} \\
        &+ C_{12} \tau \sum\limits_{m=1}^n \erww{\Vert \tilde{e}_h^{m-1}\Vert_2^2} + 6C_7 \frac{h^2}{\tau} \sum\limits_{m=1}^n \erww{\Vert v^m - v^{m-1} \Vert_{L^2}^2 + \Vert \Delta v^m - \Delta v^{m-1} \Vert_{L^2}^2}\\
        &+ 2\tau \sum\limits_{m=1}^n \Vert \overline{e}_h^{m-1}\Vert_{L^2}^2 + 2\tau \sum\limits_{m=1}^n \Vert \tilde{e}_h^{m-1}\Vert_{L^2}^2 \\
        &\leq \frac{C}{2}\tau + C_{11}T(C_1 + C_2 + C_3 + 2C_7T)  h^2 + 6C_7(C_1 + C_3) \frac{h^2}{\tau}+ (2+C_{12}) \tau \sum\limits_{m=1}^n \erww{\Vert \tilde{e}_h^{m-1}\Vert_2^2}. 
	\end{align*}
    There exists $N_0 \geq \hat{N}_0$ such that $\left(1/2 - (C_{12} + 1/2) \tau\right) > 1/4$ for all $N \geq N_0$. Hence, for all $N \geq N_0$ and all $n \in \{1,...,N\}$ we have
    \begin{align*}
        &\erww{\Vert \tilde{e}_h^n \Vert_2^2} + \sum\limits_{m=1}^n \erww{\Vert \tilde{e}_h^m - \tilde{e}_h^{m-1}\Vert_2^2} + \tau \sum\limits_{m=1}^n \erww{|\dot{e}_h^m|_{1,h}^2} \\
        &\leq 2C\tau + 4C_{11}T(C_1 + C_2 + C_3 + 2C_7T)  h^2 + 12C_7(C_1 + C_3) \frac{h^2}{\tau} \\
        & \hspace{4cm} + 2(2+C_{12}) \tau \sum\limits_{m=1}^n \erww{\Vert \tilde{e}_h^{m-1}\Vert_2^2}.
    \end{align*}
    Now, applying Lemma \ref{Gronwall} finalizes the proof of this lemma.
\end{proof}

\begin{rem}
The constants $C_9$ and $C_{10}$ respectively depend on $C_7$ and this constant may depend on $\operatorname{reg}(\Tau_h)$. Assuming \eqref{mesh_regularity}, we may overcome this dependence.
\end{rem}

\section{Proof of Theorem \ref{main theorem}}
Let $t \in [0,T]$ and $N \geq N_0 \in \mathbb{N}$. Then there exists $n \in \{1,...,N-1\}$ such that $t \in [t_{n-1}, t_n)$ or $t \in [t_{N-1}, t_N]$. For $n \in \{1,...,N\}$ we have
\begin{align*}
&\erww{\Vert u(t) - u_{h,N}^r(t) \Vert_2^2} = \erww{\Vert u(t) - u_h^n \Vert_2^2}\\
&\leq \erww{\Vert u(t) - u(t_n) + u(t_n) - v^n + v^n - \hat{v}^n + \hat{v}^n - u_h^n \Vert_2^2} \\
&\leq 16 \left(\erww{\Vert u(t) - u(t_n) \Vert_2^2} + \erww{\Vert u(t_n) - v^n \Vert_2^2} + \erww{\Vert v^n - \hat{v}^n \Vert_2^2}  + \erww{\Vert \hat{v}^n - u_h^n \Vert_2^2} \right),
\end{align*}
where $v^n$ is the solution to \eqref{ES} with initial value $u_0$ and $\hat{v}^n$ is the centered projection of $v^n$. Now, Lemma \ref{Lemma 220923_01}, Lemma \ref{Lemma 220923_03}, Corollary \ref{Corollary 260923_01} and Lemma \ref{Lemma 270923_01} yield
\begin{align*}
&\erww{\Vert u(t) - u_{h,N}^r(t) \Vert_2^2} \\
&\leq 16(C_4 \tau + C_6 \tau + C_9 h^2 + C_{10} (\tau h^2 + \frac{h^2}{\tau})) \\
&\leq \Upsilon (\tau + h^2 + \frac{h^2}{\tau})
\end{align*}
for some constant $\Upsilon >0$ depending on the mesh regularity $\operatorname{reg}(\Tau_h)$ but not depending on $n, N$ and $h$ explicitly. If \eqref{mesh_regularity} is satisfied, $\Upsilon$ may depend on $\chi$ and the dependence of $\Upsilon$ on $\operatorname{reg}(\Tau_h)$ can be omitted. Therefore, taking the supremum over $t \in [0,T]$ on the left hand side and the right hand side of the inequality, we obtain
\begin{align*}
\sup_{t \in [0,T]} \mathbb{E} \Vert u(t) - u_{h,N}^r(t) \Vert_2^2 \leq \Upsilon (\tau + h^2 + \frac{h^2}{\tau}).
\end{align*}

\bibliographystyle{plain}
\bibliography{Rates_Sapountzoglou_Rajasekaran}

\end{document}